
%

\documentclass[reqno]{amsart}
\usepackage{url}
\usepackage{amssymb}
\usepackage{graphicx}
\usepackage[colorinlistoftodos]{todonotes}
\usepackage{amsmath}
\usepackage{amsthm}

\newtheorem{theorem}{Theorem}[section]
\newtheorem{lemma}[theorem]{Lemma}
\newtheorem{corollary}[theorem]{Corollary}
\newtheorem{conjecture}[theorem]{Conjecture}

\theoremstyle{definition}
\newtheorem{definition}[theorem]{Definition}

\theoremstyle{remark}

\numberwithin{equation}{section}
\usepackage{algorithmic}
\raggedbottom
\usepackage[top=3cm,bottom=2cm,right=2cm,left=2cm]{geometry}

\begin{document}

\title[A Congruence Family For 2-Elongated Plane Partitions]{A Congruence Family For 2-Elongated Plane Partitions: An Application of the Localization Method}


\author{}
\address{}
\curraddr{}
\email{}
\thanks{}

\author{Nicolas Allen Smoot}
\address{}
\curraddr{}
\email{}
\thanks{}

\keywords{Partition congruences, modular functions, plane partitions, partition analysis, localization method, modular curve, Riemann surface}

\subjclass[2010]{Primary 11P83, Secondary 30F35}

\date{}

\dedicatory{}

\begin{abstract}
George Andrews and Peter Paule have recently conjectured an infinite family of congruences modulo powers of 3 for the 2-elongated plane partition function $d_2(n)$.  This congruence family appears difficult to prove by classical methods.  We prove a refined form of this conjecture by expressing the associated generating functions as elements of a ring of modular functions isomorphic to a localization of $\mathbb{Z}[X]$.
\end{abstract}

\maketitle

\section{Introduction}

\subsection{$k$-Elongated Plane Partitions}

In a recent paper \cite{AndrewsPaule}, George Andrews and Peter Paule study the properties of various plane partition functions.  In particular, they provide a large number of results for the $k$-elongated partition function $d_k(n)$, for $k\in\mathbb{Z}_{\ge 0}$.  This function is enumerated by

\begin{align}
D_k(q) := \sum_{n=0}^{\infty}d_k(n)q^n = \frac{(q^2;q^2)^k_{\infty}}{(q;q)^{3k+1}_{\infty}}\label{D2}.
\end{align}  For $n\ge 0$, $d_k(n)$ counts the number of unique directed graphs of $k$-elongated diamonds, shown in the diagrams below, in which each node is assigned a nonnegative integer label $a_j$, a directed edge $a_{b}\rightarrow a_c$ indicates that $a_b\ge a_c$, and the sum of the central linking nodes $a_1+a_{2k+2}+a_{4k+3}+...+a_{(2k+1)m+1}$ yields $n$.

\begin{figure}[h]
\centering
\scalebox{0.7}{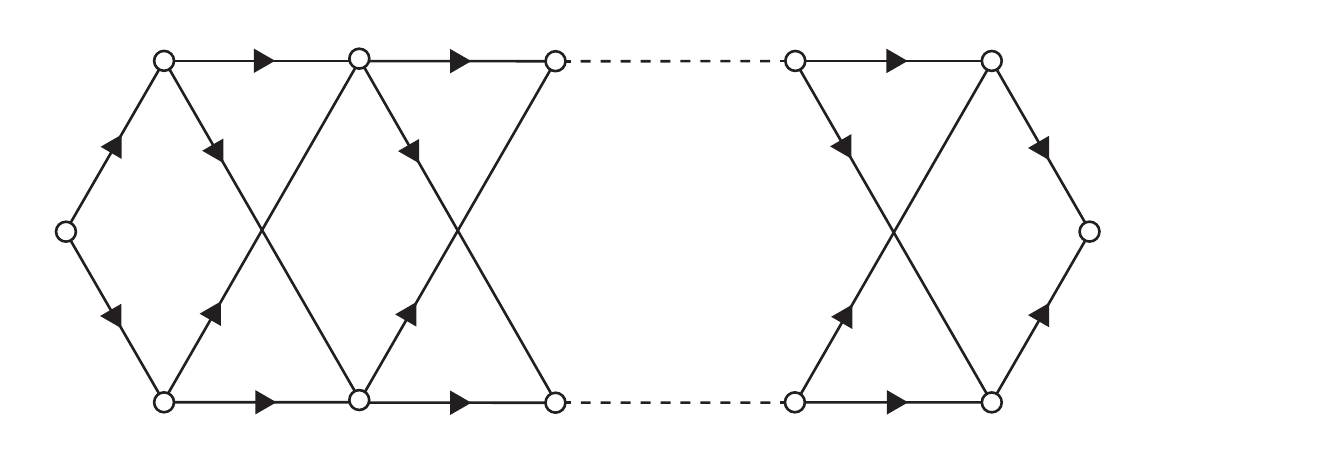}
\caption{A schematic of a $k$-elongated partition diamond of length 1.}\label{kelong1}
\end{figure}

\begin{figure}[h]
\centering
\scalebox{0.7}{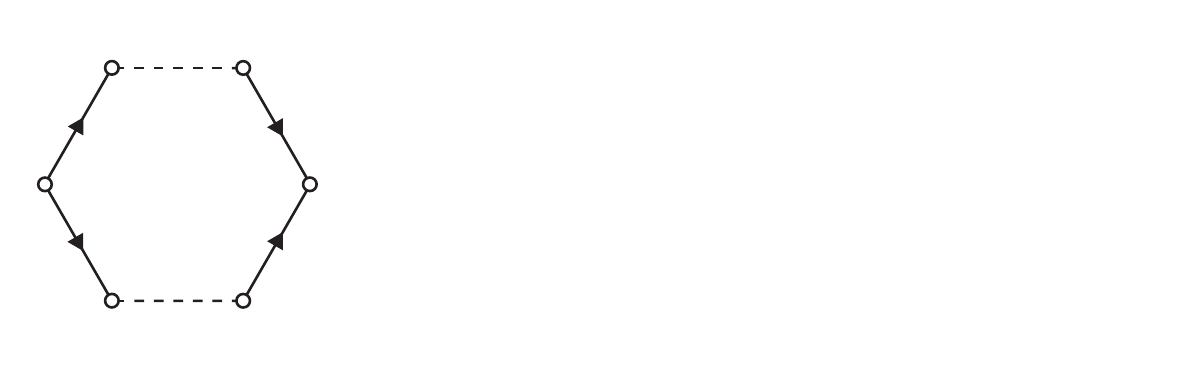}
\caption{A schematic of a $k$-elongated partition diamond of length $m$.}\label{kelong2}
\end{figure}

Notice that $d_k(n)$ generalizes the unrestricted partition function, since $d_0(n)=p(n)$.  Moreover, $k$-elongated plane partitions are also a generalization of partition diamonds, which were developed in \cite{AndrewsPaule2} as an application of the algorithmic procedures of Partition Analysis begun with MacMahon \cite[Vol. II, Section VIII]{MacMahon}, and fully developed by Andrews, Paule, and Riese in their series of papers on the subject.

One peculiar characteristic of $d_k(n)$ is a richness of arithmetic properties which is surprising even by the standards of partition-type functions.  Andrews and Paule discover many such properties in \cite[Section 7]{AndrewsPaule}.  In particular, they propose a conjecture \cite[Section 7, Conjecture 3]{AndrewsPaule} regarding an infinite family of congruences for $d_2(n)$:

\begin{conjecture}[G.E. Andrews, P. Paule]\label{conj}
Let $n,\alpha\in\mathbb{Z}_{\ge 1}$ such that $8n\equiv 1\pmod{3^{\alpha}}$.  Then $d_2(n)\equiv 0\pmod{3^{\alpha}}$.
\end{conjecture}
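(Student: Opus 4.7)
The plan is to prove Conjecture~\ref{conj} by the localization method, following the strategy developed by the second author for related congruence families. For each $\alpha\ge 1$, let $n_\alpha\in\{0,1,\ldots,3^\alpha-1\}$ be the unique residue with $8n_\alpha\equiv 1\pmod{3^\alpha}$, and set
\[
L_\alpha(q) := \sum_{m\ge 0} d_2\bigl(3^\alpha m + n_\alpha\bigr)\, q^m.
\]
The conjecture is equivalent to the refined statement that $L_\alpha\in 3^\alpha\,\mathbb{Z}[[q]]$ for every $\alpha\ge 1$, and this is the form we shall actually establish, since the divisibility at the level of power series is what the localization machinery naturally produces.

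The first step is to apply an Atkin-type $U_3$-dissection iteratively to \eqref{D2} to obtain a usable expression for $L_\alpha$, and then to multiply by a suitable Dedekind eta-quotient $E_\alpha$ so that $\mathcal{L}_\alpha := E_\alpha\cdot L_\alpha$ is a weakly holomorphic modular function on some congruence subgroup $\Gamma_0(N)$; the shape of $D_2(q)$ strongly suggests the level $N=6$. Using Ligozat-type cusp-order criteria, one verifies that each $\mathcal{L}_\alpha$ is holomorphic at every cusp except $\infty$, so that $\mathcal{L}_\alpha$ lies in the ring $\mathcal{R}$ of such modular functions. Because $X_0(6)$ is of genus zero, $\mathcal{R}$ admits a transparent algebraic description: choosing a Hauptmodul $t$ with $t(\infty)=\infty$, one identifies $\mathcal{R}$ with a \emph{localization} of $\mathbb{Z}[t]$ at the finitely many prime ideals corresponding to the nontrivial cusps of $\Gamma_0(6)$. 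This is what allows us to bring in commutative algebra in place of ad hoc manipulations of $q$-series.

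The heart of the argument is to derive a recursion
\[
\mathcal{L}_\alpha \;=\; \mathbf{U}\bigl(\mathcal{L}_{\alpha-1}\bigr)
\]
for a $\mathbb{Z}$-linear endomorphism $\mathbf{U}$ of $\mathcal{R}$ descended from $U_3$, and to exhibit a finitely generated $\mathbb{Z}[t]_{\mathfrak{p}}$-submodule $\mathcal{M}\subset\mathcal{R}$ containing all $\mathcal{L}_\alpha$ on which the matrix of $\mathbf{U}$ has every entry divisible by $3$. A straightforward induction then yields $\mathcal{L}_\alpha\in 3^\alpha\mathcal{M}$, and extracting the $q$-expansion coefficients gives Conjecture~\ref{conj}. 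The principal obstacle is precisely this step: locating the correct module $\mathcal{M}$, i.e.\ a finite generating set whose $\mathbb{Z}[t]_{\mathfrak{p}}$-span is stable under $\mathbf{U}$, and explicitly computing the image under $U_3$ of each generator in terms of $t$ so as to verify $3$-divisibility uniformly across the transfer matrix. The virtue of the localization method is that it trades the combinatorially daunting search for a single universal identity, as would be required by a classical Ramanujan--Watson-style argument, for the more tractable task of bounding $3$-adic valuations of the entries of a small matrix over an explicit local ring.
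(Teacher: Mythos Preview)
Your outline captures the general philosophy of the localization method, but it oversimplifies precisely the features that make this congruence family nontrivial, and two of your structural claims do not match what actually happens.

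First, after attaching the natural eta-quotient prefactor, the functions $L_\alpha$ are modular on $\Gamma_0(6)$ but they are \emph{not} holomorphic away from a single cusp: they carry poles at both $[0]_6$ and $[1/2]_6$.  This is the entire reason localization enters.  One does not obtain polynomials in a Hauptmodul $x$; instead one must clear the extra pole by a growing power of $z=1+9x$, so that $L_\alpha\in\mathbb{Z}[x]_{\mathcal S}$ with $\mathcal S=\{(1+9x)^n:n\ge 0\}$, and the exponent of $1+9x$ in the denominator increases with $\alpha$.  Consequently there is no single finitely generated $\mathbb{Z}[x]_{\mathcal S}$-module containing all $L_\alpha$ on which one can simply read off a finite transfer matrix.

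Second, and more seriously, the hoped-for inclusion $\mathbf U(\mathcal M)\subseteq 3\mathcal M$ is exactly what \emph{fails}.  The operator must alternate between two forms $U^{(0)}$ and $U^{(1)}$ (depending on the parity of $\alpha$), and $U^{(0)}$ contributes no additional power of $3$ at all.  The entire $3$-adic gain---a factor of $9$ per two steps---comes from the odd step $U^{(1)}$, and even that only holds on a subset $\mathcal V_n^{(1)}\subset\mathcal V_n^{(0)}$ cut out by an auxiliary congruence $s(1)+s(2)+s(3)\equiv 0\pmod 9$ on the first three coefficients.  The genuine technical heart of the proof is showing that this side condition is \emph{stable} under $U^{(0)}\circ U^{(1)}$ (the paper's Theorem~\ref{oddbacktoodd}); this is a delicate mod-$9$ computation tracking how the coefficients $h_i(m,n,r)$ behave under the recursion induced by the modular equations for $x$ and $z$, and it is far from the ``straightforward induction'' your sketch anticipates.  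Without identifying this auxiliary congruence and proving its stability, the induction does not close.
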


Andrews and Paule state that this conjecture ``seem[s] to be particularly challenging" \cite[Section 8]{AndrewsPaule}.  In this paper we refine and prove Conjecture \ref{conj}.

\subsection{A Brief History of Partition Congruence Families}

The conjecture above resembles the form of a very large number of similar partition congruence families that have been studied over the last century.  The first of such congruence families came from Ramanujan's groundbreaking work in 1918 \cite{Ramanujan}.  He conjectured that if $\ell\in\{5,7,11\}$ and $n,\alpha\ge 1$ are positive integers such that $24n\equiv 1\pmod{\ell^{\alpha}}$, then $p(n)\equiv 0\pmod{\ell^{\alpha}}.$  This conjecture has since been proved, with a slight modification in the case of $\ell=7$:

\begin{align}
\text{If } 24n\equiv 1\pmod{5^{\alpha}},&\text{ then } p(n)\equiv 0\pmod{5^{\alpha}},\label{origconjecR5}\\
\text{If } 24n\equiv 1\pmod{7^{\alpha}},&\text{ then } p(n)\equiv 0\pmod{7^{\left\lfloor\alpha/2\right\rfloor + 1}},\label{origconjecR7}\\
\text{If } 24n\equiv 1\pmod{11^{\alpha}},&\text{ then } p(n)\equiv 0\pmod{11^{\alpha}}\label{origconjecR11}.
\end{align}

Many similar congruence families have since been found, in a very large variety of more restrictive partition functions.  Such congruence families reveal an astonishing depth of arithmetic properties within integer partitions, which do not appear at first sight to have any substantial arithmetic structure.

More surprising still is the enormous variety in the difficulty of the proofs of such congruence families.  Many are proved routinely using classical methods.  For example, Watson proved (\ref{origconjecR5}) and (\ref{origconjecR7}) in 1938 \cite{Watson}, and Ramanujan himself appears to have understood the proof of (\ref{origconjecR5}) years earlier \cite{BO}.  Many congruence families can be proved with similar techniques used by Ramanujan and Watson.

Other examples are much more arduous.  The case of (\ref{origconjecR11}) was not proved until Atkin's work in 1967 \cite{Atkin}.  Even today some contemporary families can take years to resolve.  The Andrews--Sellers congruence family for generalized 2-colored Frobenius partitions was proposed in 1994, but it was not proved until 2012 \cite{Paule}.  Conjecture \ref{conj} appears at first sight to be of the more difficult variety, as Andrews and Paule observe \cite[Sections 7, 8]{AndrewsPaule}.

Nevertheless, the opening steps of a proof of a given infinite family of partition congruences are almost always the same: we identify an associated modular curve (usually $\mathrm{X}_0(N)$ for some $N\ge 2$), and construct a sequence of modular functions $\mathcal{L}:=\left( L_{\alpha} \right)_{\alpha\ge 1}$ which enumerate the cases of the congruence family for each power $\alpha$.  In the case of Conjecture \ref{conj}, we have the associated modular curve $\mathrm{X}_0(6)$, and the functions $L_{\alpha}$ have the form

\begin{align*}
L_{\alpha} = \Phi_{\alpha}(q)\cdot \sum_{8n\equiv 1\bmod{3^{\alpha}}} d_2(n)q^{\left\lfloor n/3^{\alpha} \right\rfloor+1},
\end{align*} with $q=e^{2\pi i\tau}$, $\tau\in\mathbb{H}$ and $\Phi_{\alpha}(q)-1\in q\mathbb{Z}[[q]]$ (see Section 2).

One then defines a sequence of linear operators (generally an alternating sequence) $U^{(\alpha)}$, which are usually modified forms of the classical operator $U_{\ell}$ for the relevant prime $\ell$ (in our case, $U_3$), and in which $U^{(\alpha)}(L_{\alpha})=L_{\alpha+1}$.

We want to know how to properly represent $L_{\alpha}$ in terms of some useful reference functions, and how such functions behave on the application of $U^{(\alpha)}$.  Because the reference functions most useful to us are themselves modular functions, they are generally subject to a modular equation.  From this we can construct a recurrence relation for the application of $U^{(\alpha)}$ on the reference functions, and ultimately an induction proof for the overall congruence family.

This approach is common to all known proofs of infinite congruence families.  Indeed, it was understood at least roughly by Ramanujan and Watson.  However, in the cases (\ref{origconjecR5}) and (\ref{origconjecR7}) that Ramanujan and Watson proved, the topology of the associated modular curves is very simple.  In particular, the genus $\mathfrak{g}$ of each curve is 0, and the cusp count $\epsilon_{\infty}$ of each curve is 2.  This reduces the associated function spaces to single-variable polynomial rings.  Thus, a detailed understanding of the underlying modular curve is not necessary in these cases.

Difficulties quickly mount once $\mathfrak{g}$ or $\epsilon_{\infty}$ grow even slightly.  Lehner \cite{Lehner6}, \cite{Lehner7}, and later Atkin \cite{Atkin} understood that the primary difficulty in the proof of (\ref{origconjecR11}) is that the genus of the associated modular curve is 1.  The Weierstrass gap theorem \cite{Paule2} thus requires a more intricate algebraic structure to represent the analogues to the functions $L_{\alpha}$.

The cusp count further complicates matters.  If $\epsilon_{\infty}\left( \mathrm{X}_0(N) \right)=2$---as in the case for $\mathrm{X}_0(\ell)$ with $\ell$ a prime---then the functions $L_{\alpha}$ would necessarily have a zero at one cusp and a pole at the other \cite[Chapter V, Theorem 4F]{Lehner}.  This would ensure that $L_{\alpha}$ will have a polynomial representation in terms of a given set of reference functions.  It is this representation which would likely hold the arithmetic information of interest to us.  This kind of representation is indeed common to (\ref{origconjecR5}), (\ref{origconjecR7}), and even (\ref{origconjecR11}), since the associated modular curve for each of these families has a prime level.

In the case of Conjecture \ref{conj}, we have $\mathfrak{g}\left(\mathrm{X}_0(6)\right)=0$ and $\epsilon_{\infty}\left(\mathrm{X}_0(6)\right)=4.$  Thus the genus simplifies the problem, while the cusp count complicates it.  The zero genus means that our space of modular functions meromorphic at a single cusp of the corresponding modular curve will be as simple as in the cases of (\ref{origconjecR5}) and (\ref{origconjecR7}).  In particular, the space is isomorphic to $\mathbb{C}[x]$ for a Hauptmodul $x$.

However, due to the larger number of cusps, $L_{\alpha}$ may have poles at multiple points, and a polynomial expression in terms of a Hauptmodul is far from assured.  We therefore need to express each $L_{\alpha}$ as a member of a certain space of functions meromorphic at multiple cusps.  Importantly, this space must have stability under repeated application of $U^{(\alpha)}$.  Atkin appears to have been the first to recognize the importance of organizing modular functions by this property \cite{Atkin}.  However, Atkin worked over $\mathrm{X}_0(11)$, with only two cusps, and thus function spaces reducible to polynomial representations for the relevant functions.  We need to generalize to a larger number of cusps.

This is a new approach to the problem of congruence families, which to our knowledge has yet to be carefully examined.  The conceptual linchpin is that we can push each $L_{\alpha}$ into the space of modular functions with a pole at a single cusp of $\mathrm{X}_0(6)$.  This can be done by multiplying each $L_{\alpha}$ by a suitable nonnegative power $n_{\alpha}$ of a specific modular function $z$ which has a pole at a single cusp, and positive order at all of the other cusps for which $L_{\alpha}$ has poles, as well as an integer power series expansion in the Fourier variable $q$, and leading coefficient 1.  So long as we are working over classical modular curves, such a modular function is guaranteed to exist \cite[Lemma~20]{Radu}, \cite{Newman}.

Because $z^{n_{\alpha}}$ and $z^{n_{\alpha}} L_{\alpha}$ are both within the space of modular functions with a pole at a single cusp, they may both be expressed in polynomial form in terms of a given Hauptmodul $x$.  Upon division by $z^{n_{\alpha}}$, we have a representation of $L_{\alpha}$ not as a polynomial, but rather as a \textit{rational polynomial in} $x$.  If $x,z$ are chosen well, then the coefficients of the numerator may contain the arithmetic information of interest to us.  In particular, because $z$ has leading coefficient 1, the presence of $z$ in the denominator will not impede our efforts to determine divisibility properties which might be glimpsed from the numerator.

As we will see, this allows us to study a large class of modular functions (localized polynomials in a Hauptmodul), with poles at multiple cusps.  

\subsection{Main Theorem}

Using the approach discussed above, we have proved the following:

\begin{theorem}\label{Thm12}
Let $n,\alpha\in\mathbb{Z}_{\ge 1}$ such that $8n\equiv 1\pmod{3^{\alpha}}$.  Then $d_2(n)\equiv 0\pmod{3^{2\left\lfloor\alpha/2\right\rfloor+1}}$.
\end{theorem}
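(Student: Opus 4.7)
The plan is to execute the localization method outlined in the introduction, working over the modular curve $\mathrm{X}_0(6)$ with its four cusps and genus zero. First, I would fix a Hauptmodul $x$ for $\mathrm{X}_0(6)$, realised as an eta-quotient whose divisor is supported at the four cusps $\{\infty,0,1/2,1/3\}$, together with a second eta-quotient $z$ with integer $q$-expansion and leading coefficient $1$ whose pole is concentrated at a single designated cusp and whose orders at the other three cusps are strictly positive. This $z$ serves as the ``localizer.'' The eta-quotient form of $D_2(q)$ in (\ref{D2}), together with a suitable prefactor $\Phi_\alpha(q)$, will make the witness
$$
L_{\alpha} \;=\; \Phi_{\alpha}(q)\cdot \!\!\!\!\sum_{8n\equiv 1\,(\bmod\, 3^{\alpha})}\!\!\!\! d_2(n)\, q^{\lfloor n/3^{\alpha}\rfloor+1}
$$
a bona fide modular function on $\mathrm{X}_0(6)$.

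Next, I would construct an alternating family of operators $U^{(\alpha)}$, each a twisted form of the classical $U_3$, and verify on Fourier expansions that $U^{(\alpha)}(L_{\alpha})=L_{\alpha+1}$. Two distinct twists are needed, since the defining condition $8n\equiv 1\pmod{3^{\alpha}}$ cuts out alternating cosets modulo $3^{\alpha+1}$ as $\alpha$ increases by one, and the twist absorbs the shift. With these recurrences in place, the theorem reduces to the statement
$$
L_{\alpha}\;\in\;3^{2\lfloor \alpha/2\rfloor+1}\cdot V,\qquad V:=\mathbb{Z}[x]\!\left[z^{-1}\right],
$$
viewed as a $\mathbb{Z}$-submodule of the function field of $\mathrm{X}_0(6)$.

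The core analytic step is to prove that $V$ is stable under each $U^{(\alpha)}$ and to control the $3$-adic valuations of the resulting matrix entries. Because the Hauptmodul $x$ satisfies an explicit polynomial modular equation of level $3$, one can compute $U^{(\alpha)}(x^{k}z^{-n})$ as a specific element of $V$ by clearing denominators using a controlled power of $z$. One then filters $V$ by the pole order at the designated cusp and argues, basis element by basis element, that (i) $U^{(\alpha)}$ raises the $3$-adic valuation of every coefficient by at least one, and that (ii) on a distinguished sub-filtration whose index tracks the parity of $\alpha$, an additional factor of $3$ is gained every other application. This doubled gain is precisely what produces the exponent $2\lfloor\alpha/2\rfloor+1$ rather than $\alpha$, and is what yields the strict refinement of Conjecture \ref{conj}.

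The main obstacle will be this combined stability-and-divisibility bookkeeping inside the infinite-dimensional module $V$. In the Ramanujan-Watson setting the analogous ambient space is a single-variable polynomial ring and the matrix of $U^{(\alpha)}$ is effectively finite; here the inversion of $z$ introduces genuinely infinitely many basis elements, and one must simultaneously rule out ``denominator explosion'' in powers of $z^{-1}$ and secure the $3$-adic growth on the $x$-side. Once that bookkeeping is in place, an induction on $\alpha$, with the base cases $\alpha=1,2$ verified by direct computation, delivers $3^{2\lfloor\alpha/2\rfloor+1}\mid L_{\alpha}$ coefficientwise, and extracting the coefficient of $q^{\lfloor n/3^{\alpha}\rfloor+1}$ yields Theorem \ref{Thm12}.
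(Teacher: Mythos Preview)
Your proposal follows essentially the same localization approach as the paper, and you have correctly identified the main structural ingredients: the Hauptmodul $x$, the localizing factor $z=1+9x$, the alternating operators $U^{(i)}$, the modular equation of level $3$ for $x$, and the need to control both the denominator power in $z$ and the $3$-adic valuation of the numerator by induction.

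There is, however, one concrete misstatement in your divisibility bookkeeping that would have to be corrected before the argument runs. It is \emph{not} true that each $U^{(\alpha)}$ raises the $3$-adic valuation by at least one: for instance $U^{(0)}(x^2)=(1+9x)^{-1}\bigl(x+1213x^2+\cdots\bigr)$ has leading numerator coefficient $1$, so $U^{(0)}$ gains nothing in general. The actual pattern is that $U^{(1)}$ (odd $\alpha$ to even $\alpha$) gains \emph{two} powers of $3$ while $U^{(0)}$ (even to odd) gains \emph{none}; this $(2,0,2,0,\dots)$ pattern is what produces the exponent $2\lfloor\alpha/2\rfloor+1$, whereas your proposed $(1{+}1)$ pattern would overshoot. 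More importantly, the gain of two from $U^{(1)}$ does not hold on all of $\mathbb{Z}[x][z^{-1}]$ but only on a proper submodule $\mathcal{V}_n^{(1)}$ singled out by a congruence $s(1)+s(2)+s(3)\equiv 0\pmod 9$ on the first three numerator coefficients; the coefficient of $x^1$ simply fails to pick up the needed powers otherwise. The genuinely delicate step, which your outline does not anticipate, is proving that this auxiliary congruence condition is \emph{stable} under the composite $U^{(0)}\circ U^{(1)}$ (Theorem~\ref{oddbacktoodd} in the paper). Without that stability the induction does not close, and it is there---not in the denominator bookkeeping you flag as the main obstacle---that most of the work lies.
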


\noindent Notice that this theorem extends Conjecture \ref{conj} by accounting for an additional power of 3 when $\alpha$ is even.

As an example, we take the first case of Theorem \ref{Thm12}.  The first function of $\mathcal{L}$  has the form

\begin{align}
L_{1} &= \frac{(q^3;q^3)^7_{\infty}}{(q^6;q^6)^2_{\infty}} \sum_{n=0}^{\infty}d_2\left(3n + 2\right)q^{n+1}.\label{L1def}
\end{align}  With the Hauptmodul

\begin{align}
x = x(\tau) :=& q\frac{(q^2;q^2)_{\infty}(q^6;q^6)_{\infty}^5}{(q;q)_{\infty}^5(q^3;q^3)_{\infty}},\label{xdef}
\end{align} we have the following identity:

\begin{align}
L_1 =& \frac{1}{1+9x}\cdot \left( 33x+1392x^2+21120x^3+138240x^4+331776x^5 \right).\label{L1S}
\end{align}  Given that $1+9x\equiv 1\pmod{3}$, the divisibility of the numerator coefficients by 3 yields the information we need.

Such a representation exists for every $L_{\alpha}\in\mathcal{L}$.  Indeed, Theorem \ref{Thm12} and Conjecture \ref{conj} immediately arise from the following theorem---the central result of this paper:

\begin{theorem}\label{Mythm}
Let

\begin{align*}
\psi:=\left\lfloor \frac{3^{\alpha+1}}{8} \right\rfloor,\ \ \ \ \ \beta := 2\left\lfloor\alpha/2\right\rfloor+1.
\end{align*}  For all $\alpha\ge 1$,

\begin{align}
\frac{(1+9x)^{\psi}}{3^{\beta}}\cdot L_{\alpha}\in\mathbb{Z}[x].\label{myeqn}
\end{align}

\end{theorem}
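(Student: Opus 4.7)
The plan is induction on $\alpha$. The base case $\alpha=1$ is already supplied by the explicit identity (\ref{L1S}): since $\psi_1=1$ and $\beta_1=1$, (\ref{L1S}) says exactly that $(1+9x)L_1/3 \in \mathbb{Z}[x]$. A parallel direct computation at $\alpha=2$, obtained by a single application of the operator $U^{(2)}$ to $L_1$, is expected to produce an analogous identity $(1+9x)^3 L_2/3^3 \in \mathbb{Z}[x]$, covering the second base case and seeding the parity-sensitive induction described below.

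To set up the induction, I would first fix the algebraic framework. Because $X_0(6)$ has genus $0$ with four cusps while $x$ has a pole only at $\infty$, the functions $L_\alpha$ do not live in $\mathbb{Z}[x]$ but in a localization. Motivated by (\ref{L1S}), define
\begin{equation*}
\mathcal{R} := \mathbb{Z}[x]\bigl[(1+9x)^{-1}\bigr],
\end{equation*}
and, for each $\alpha\geq 1$, the $\mathbb{Z}$-module
\begin{equation*}
\mathcal{V}_\alpha := \frac{1}{3^{\beta_\alpha}(1+9x)^{\psi_\alpha}}\,\mathbb{Z}[x] \subset \mathcal{R}.
\end{equation*}
Theorem \ref{Mythm} is then the assertion $L_\alpha\in\mathcal{V}_\alpha$ for all $\alpha\geq 1$, and the inductive step reduces to proving the containment $U^{(\alpha)}\bigl(\mathcal{V}_\alpha\bigr)\subseteq \mathcal{V}_{\alpha+1}$, where $U^{(\alpha)}$ is the (alternating) operator built from $U_3$ that sends $L_\alpha$ to $L_{\alpha+1}$.

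Next, I would analyze how $U^{(\alpha)}$ acts on a natural $\mathbb{Z}$-module basis of $\mathcal{V}_\alpha$, say $\{(1+9x)^{-\psi_\alpha}\,x^k : k\geq 0\}$. Using the degree-$3$ modular equation satisfied by $x$ on $X_0(6)$ together with the transformation of $1+9x$ under the three cosets defining $U_3$, one obtains an expansion
\begin{equation*}
U^{(\alpha)}\!\left(\frac{x^k}{(1+9x)^{\psi_\alpha}}\right) = \frac{1}{(1+9x)^{\psi_{\alpha+1}}}\sum_{j\geq 0} c^{(\alpha)}_{k,j}\,x^j
\end{equation*}
for rational coefficients $c^{(\alpha)}_{k,j}$ that vanish outside a controlled range of $j$. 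The inductive step then amounts to the uniform valuation estimate $v_3\bigl(c^{(\alpha)}_{k,j}\bigr) \geq \beta_{\alpha}-\beta_{\alpha+1}$, together with companion estimates forcing the exponent of $1+9x$ in the denominator to land at exactly $\psi_{\alpha+1}$.

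The main obstacle is the alternating $3$-adic behavior. A direct parity check shows $\beta_{\alpha+1}-\beta_\alpha=2$ when $\alpha$ is odd but $\beta_{\alpha+1}-\beta_\alpha=0$ when $\alpha$ is even. On odd-to-even steps one must therefore extract two extra powers of $3$ from a single application of $U^{(\alpha)}$, while on even-to-odd steps the operator need only preserve the $3$-adic content already present. Because the two regimes call for different numerical inputs from the modular equation, the induction runs in two interleaved strands, and the technical heart of the proof will be a parity-sensitive core lemma describing $U^{(\alpha)}$ as a matrix on the chosen basis with entries whose $3$-adic valuations are bounded below by explicit linear functions of $k$ and $j$. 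Once that lemma is secured, the implication $L_\alpha\in\mathcal{V}_\alpha \Rightarrow L_{\alpha+1}\in\mathcal{V}_{\alpha+1}$ follows by a straightforward propagation, and Theorem \ref{Mythm} --- and hence both Theorem \ref{Thm12} and Conjecture \ref{conj} --- is established.
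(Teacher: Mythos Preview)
Your overall architecture is exactly that of the paper: localize $\mathbb{Z}[x]$ at $1+9x$, run an induction through the alternating operators $U^{(\alpha)}$, and control the $3$-adic content of $U^{(\alpha)}$ on the basis $\{x^k/(1+9x)^n\}$ via the degree-$3$ modular equation.  However, the inductive step as you state it---the containment $U^{(\alpha)}(\mathcal{V}_\alpha)\subseteq\mathcal{V}_{\alpha+1}$ for the coarse modules $\mathcal{V}_\alpha=3^{-\beta_\alpha}(1+9x)^{-\psi_\alpha}\mathbb{Z}[x]$---is \emph{false}, so no ``parity-sensitive core lemma'' of the type you describe can be proved.

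Concretely, on the odd-to-even step you would need $U^{(1)}\bigl(x^m/(1+9x)^n\bigr)\in 9\cdot(1+9x)^{-3n}\mathbb{Z}[x]$ uniformly in $m$.  Expanding via the modular equation gives
\[
U^{(1)}\!\left(\frac{x^m}{(1+9x)^n}\right)=\frac{1}{(1+9x)^{3n}}\sum_{r\ge\lceil m/3\rceil}h_1(m,n,r)\,3^{\pi_1(m,r)}x^r
\]
with integral $h_1$, but for $1\le m\le 3$ and $r=1$ one has $\pi_1(m,1)=0$ and $h_1(m,n,1)\equiv 1\pmod 9$, so the $x^1$-coefficient carries no extra power of $3$.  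Purely linear valuation bounds in $(k,j)$ cannot rescue this: the deficit is genuinely $2$.  The paper's remedy is to refine the module on the odd side by an auxiliary congruence, imposing $\sum_{m=1}^{3}s(m)\equiv 0\pmod 9$ on the first three coefficients (this is the set $\mathcal{V}_n^{(1)}$), together with a graded weighting $3^{\theta(m)}$ on higher terms.  Under this constraint the offending $x^1$-term becomes $\sum_{m=1}^3 s(m)h_1(m,n,1)\equiv\sum_{m=1}^3 s(m)\equiv 0\pmod 9$, restoring the gain.  The nontrivial new obligation---absent from your outline---is then to check that this congruence condition is \emph{stable}: after one full cycle $U^{(0)}\circ U^{(1)}$ and division by $9$, the first three coefficients of the image again sum to $0\pmod 9$.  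This is a finite but delicate computation (Theorem~\ref{oddbacktoodd} in the paper) requiring mod-$9$ control of $h_i(m,n,r)$ in $n$, and it is the actual technical heart of the proof.
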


The methods just described were used to prove Theorem \ref{Mythm}, and therefore Theorem \ref{Thm12} and Conjecture \ref{conj}, with astonishing rapidity.  The author successfully constructed the proof given in this article less than a week after first glimpsing Conjecture \ref{conj}.  We stress this point to emphasize the utility of our localization method for resolving congruence conjectures.  We also point out that this technique has already been applied \cite{Smoot0} to prove a congruence family associated with a family of weakly modular forms.  Indeed, many of the key steps in the proof given here are similar to the steps specified in the proof of the congruence family in \cite{Smoot0} (though in our current case, the question of proving the associated initial relations is easier than the analogous relations in \cite{Smoot0}).  We are compelled to believe that these techniques will prove useful elsewhere in the theory of modular forms and its applications.  We hope that other researchers will benefit from our work.

The remainder of our paper is organized as follows:  In Section \ref{basictheorysection} we will briefly review the necessary theory of modular functions.  In Section \ref{proofsetup} we will more precisely define the modular functions $L_{\alpha}$, together with an associated alternating linear operating sequence $U^{(i)}$, with $i\equiv\alpha\pmod{2}$.  In Section \ref{TheModularEquations} we also define our functions $x,z$, together with their modular equations.

In Section \ref{algebrastructure} we will define the localized polynomial ring that is most important to us, together with some important subgroups $\mathcal{V}_n^{(i)}$.  We will also use the modular equations for $x,z$ to develop recurrences for $U^{(i)}\left( x^m/(1+9x)^n \right)$.

In Section \ref{maintheoremsection} we will use these recurrences to study how the operator $U^{(i)}$ takes elements from $\mathcal{V}_n^{(i)}$ and sends them to $\mathcal{V}_{n'}^{(i+1)}$ for some $n'>n$.  In particular, we will show that elements from $\mathcal{V}_n^{(1)}$ will gain two powers of 3 upon being sent to $\mathcal{V}_{3n}^{(0)}$.  We then demonstrate that $L_1\in \mathcal{V}_1^{(1)}$, and complete the proof of Theorem \ref{Mythm}.

Section \ref{initialrelationssection} will show how to prove the 18 initial relations necessary for the recurrences of Section \ref{algebrastructure}.  We show that these relations are algebraically dependent on 6 fundamental relations: namely, $U^{(i)}\left( x^m\right)$ for $i=0,1$, and $0\le m\le 2$.  These relations can easily be proved, along with some additional properties involving the functions $x$ and $z$, using the finiteness principles of the modular cusp analysis.

Appendix I will cover some additional computational details needed in Section \ref{maintheoremsection}.  Appendix II will list the 6 fundamental relations from Section \ref{initialrelationssection}.  We also provide a Mathematica supplement which will detail many of the other calculations too cumbersome to include here.  All of the key computations in this paper can be justified by the algorithms developed by Silviu Radu in \cite{Radu}, and with a software implementation in \cite{Smoot1}.  However, for the sake of accessibility, we include all of the necessary computational procedures in our supplement, without requiring any additional software beyond Version 12.2 of Wolfram Mathematica.  The supplement can be found online at \url{https://www.risc.jku.at/people/nsmoot/2eplanepartsupplement.nb}.

\section{Basic Theory}\label{basictheorysection}

We preface our results with an overview of the theory of modular functions.  Given the importance of the topology of the associated Riemann surface to constructing a proof of Theorem \ref{Mythm}, we will (briefly!) define the modular curve $\mathrm{X}_0(N)$ and highlight some of the key topological numbers mentioned in the Introduction.  We will also define a modular function for our purposes, some key finiteness conditions imposed by the theory, and some useful tools for computing the order of certain modular functions.  We will also define the $U_3$ operator and describe some of its properties.

Our treatment is necessarily limited for want of space.  For a classic review of the theory, see \cite{Knopp}.  For a more modern treatment, see \cite{Diamond}.

\subsection{Preliminaries}

Define $\mathbb{H}$ to be the upper half complex plane, and $\hat{\mathbb{H}}:=\mathbb{H}\cup\{\infty\}\cup\mathbb{Q}$.  We define $\hat{\mathbb{Q}}:=\mathbb{Q}\cup\{\infty\}$, and for any $a\neq 0$ we let  $a/0=\infty$.  We also define $\mathrm{SL}(2,\mathbb{Z})$ as the set of all $2\times 2$ integer matrices with determinant~1.

For any given $N\in\mathbb{Z}_{\ge 1}$, let

\begin{align*}
\Gamma_0(N) = \Bigg\{ \begin{pmatrix}
  a & b \\
  c & d 
 \end{pmatrix}\in \mathrm{SL}(2,\mathbb{Z}) : N|c \Bigg\}.
\end{align*}

We define a group action

\begin{align*}
\Gamma_0(N)\times\hat{\mathbb{H}}&\longrightarrow\hat{\mathbb{H}},\\
\left(\begin{pmatrix}
  a & b \\
  c & d 
 \end{pmatrix},\tau\right)&\longrightarrow \frac{a\tau+b}{c\tau+d}.
\end{align*} If $\gamma=\begin{pmatrix}
  a & b \\
  c & d 
 \end{pmatrix}$ and $\tau\in\hat{\mathbb{H}}$, then we write
 
 \begin{align*}
 \gamma\tau := \frac{a\tau+b}{c\tau+d}.
 \end{align*}  The orbits of this action are defined as

\begin{align*}
[\tau]_N := \left\{ \gamma\tau: \gamma\in\Gamma_0(N) \right\}.
\end{align*}

\begin{definition}
For any $N\in\mathbb{Z}_{\ge 1}$, we define the classical modular curve of level $N$ as the set of all orbits of $\Gamma_0(N)$ applied to $\hat{\mathbb{H}}$:

\begin{align*}
\mathrm{X}_0(N):=\left\{ [\tau]_N : \tau\in\hat{\mathbb{H}} \right\}
\end{align*}
\end{definition}

We can restrict the group action to $\hat{\mathbb{Q}}$: for each $\tau\in\hat{\mathbb{Q}}$, $[\tau]_N\subseteq\hat{\mathbb{Q}}$.  There are only a finite number of such orbits \cite[Section 3.8]{Diamond}.

\begin{definition}
For any $N\in\mathbb{Z}_{\ge 1}$, the cusps of $\mathrm{X}_0(N)$ are the orbits of $\Gamma_0(N)$ applied to $\hat{\mathbb{Q}}$.
\end{definition}

Two rational elements $a_1/c_1, a_2/c_2$ are in the same cusp if and only if \cite[Proposition 3.8.3]{Diamond} there exist some $j,y\in\mathbb{Z}$ with $\mathrm{gcd}(y,N)=1$ and

\begin{align}
y\cdot a_2&\equiv a_1+j\cdot c_1\pmod{N},\label{equivalentcuspsA}\\
c_2&\equiv y\cdot c_1\pmod{N}.\label{equivalentcuspsB}
\end{align}

A detailed description of the Riemann surface structure of $\mathrm{X}_0(N)$ is given in \cite[Chapters 2,3]{Diamond}.  We will additionally state that $\mathrm{X}_0(N)$ possesses two unique nonnegative integers of interest to us: the genus, denoted $\mathfrak{g}\left(  \mathrm{X}_0(N) \right)$, and the number of cusps, denoted $\epsilon_{\infty}\left(  \mathrm{X}_0(N) \right)$.  These are both referenced in the Introduction, and can be computed according to formulae in \cite[Chapter 3, Sections 3.1, 3.8]{Diamond}.

\begin{definition}\label{DefnModular}
Let $f:\mathbb{H}\longrightarrow\mathbb{C}$ be holomorphic on $\mathbb{H}$.  Then $f$ is a modular function over $\Gamma_0(N)$ if the following properties are satisfied for every $\gamma=\left(\begin{smallmatrix}
  a & b \\
  c & d 
 \end{smallmatrix}\right)\in\mathrm{SL}(2,\mathbb{Z})$:

\begin{enumerate}
\item If $\gamma\in\Gamma_0(N)$, we have $\displaystyle{f\left( \gamma\tau \right) = f(\tau)}.$
\item We have $$\displaystyle{f\left( \gamma\tau \right) = \sum_{n=n_{\gamma}}^{\infty}\alpha_{\gamma}(n)q^{n\gcd(c^2,N)/ N}},$$  with $n_{\gamma}\in\mathbb{Z}$, and $\alpha_{\gamma}(n_{\gamma})\neq 0$.  If $n_{\gamma}\ge 0$, then $f$ is holomorphic at the cusp $[a/c]_N$.  Otherwise, $f$ has a pole of order $n_{\gamma}$, and principal part
 \begin{align}
 \sum_{n=n_{\gamma}}^{-1}\alpha_{\gamma}(n)q^{n\gcd(c^2,N)/ N},\label{princpartmod}
 \end{align} at the cusp $[a/c]_N$.
 \end{enumerate}  We refer to $\mathrm{ord}_{a/c}^{(N)}(f) := n_{\gamma}(f)$ as the order of $f$ at the cusp $[a/c]_N$.
\end{definition}

We now define the relevant sets of all modular functions:

\begin{definition}
Let $\mathcal{M}\left(\Gamma_0(N)\right)$ be the set of all modular functions over $\Gamma_0(N)$, and $\mathcal{M}^{a/c}\left(\Gamma_0(N)\right)\subset \mathcal{M}\left(\Gamma_0(N)\right)$ to be those modular functions over $\Gamma_0(N)$ with a pole only at the cusp $[a/c]_N$.  These are commutative algebras with 1, and standard addition and multiplication \cite[Section 2.1]{Radu}.
\end{definition}

Due to its precise symmetry over $\Gamma_0(N)$, any modular function $f\in\mathcal{M}\left( \Gamma_0(N) \right)$ induces a well-defined function

\begin{align*}
\hat{f}&:\mathrm{X}_0(N)\longrightarrow\mathbb{C}\cup\{\infty\}\\
&:[\tau]_N\longrightarrow f(\tau).
\end{align*}  The notions of pole order and cusps of $f$ used in Definition \ref{DefnModular} are constructed so as to coincide with these notions applied to $\hat{f}$ on $\mathrm{X}_0(N)$.  Indeed, the set of meromorphic functions on $\mathrm{X}_0(N)$ with poles only at the cusps has a one-to-one correspondence with $\mathcal{M}\left(\Gamma_0(N)\right)$ \cite[Chapter VI, Theorem 4A]{Lehner}.  In particular, (\ref{princpartmod}) represents the principal part of $\hat{f}$ in local coordinates near the cusp $[a/c]_N$.  Notice that as $\tau\rightarrow i\infty$, we must have $\gamma\tau\rightarrow a/c$, and $q\rightarrow 0$.

Because $f$ is holomorphic on $\mathbb{H}$ by definition, all possible poles for $\hat{f}$ must be found for $[\tau]_N\subseteq\hat{\mathbb{Q}}$.

\begin{theorem}\label{riemannsurfacetheorema}
Let $\mathrm{X}$ be a compact Riemann surface, and let $\hat{f}:\mathrm{X}\longrightarrow\mathbb{C}$ be analytic on all of $\mathrm{X}$.  Then $\hat{f}$ must be a constant function.
\end{theorem}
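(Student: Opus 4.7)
The plan is to reduce the statement to the classical maximum modulus principle for holomorphic functions on open sets of $\mathbb{C}$, using compactness to produce a global maximum and using the connectedness of $\mathrm{X}$ to propagate the resulting local constancy to all of $\mathrm{X}$.

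First I would observe that $|\hat{f}|:\mathrm{X}\longrightarrow\mathbb{R}_{\ge 0}$ is continuous, so the compactness of $\mathrm{X}$ guarantees the existence of a point $p_0\in\mathrm{X}$ at which $|\hat{f}|$ attains its supremum $M:=\sup_{p\in\mathrm{X}}|\hat{f}(p)|$. The goal is to show that the set
\begin{align*}
S:=\{p\in\mathrm{X}:\hat{f}(p)=\hat{f}(p_0)\}
\end{align*}
coincides with $\mathrm{X}$. Since $\hat{f}$ is continuous, $S$ is closed, and it is nonempty since $p_0\in S$. By connectedness of $\mathrm{X}$ (part of the definition of a Riemann surface), it suffices to show that $S$ is open.

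To establish openness, I would take any $q\in S$ and choose a holomorphic chart $(U,\varphi)$ around $q$, with $\varphi:U\longrightarrow V\subseteq\mathbb{C}$ a biholomorphism onto an open disk $V$ centered at $\varphi(q)=0$. The composition $g:=\hat{f}\circ\varphi^{-1}:V\longrightarrow\mathbb{C}$ is holomorphic, and $|g|$ attains its maximum value $M$ at the interior point $0\in V$. The classical maximum modulus principle then forces $g$ to be constant on $V$, and hence $\hat{f}$ is constant on $U$ with value $\hat{f}(q)=\hat{f}(p_0)$. Thus $U\subseteq S$, so $S$ is open.

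Combining these observations, $S$ is a nonempty clopen subset of the connected space $\mathrm{X}$, so $S=\mathrm{X}$, and $\hat{f}$ is the constant function with value $\hat{f}(p_0)$. The only genuinely nontrivial input here is the maximum modulus principle for holomorphic functions on $\mathbb{C}$; the main conceptual point, rather than an obstacle, is the interplay between compactness (which produces a global maximum) and connectedness (which globalizes local constancy), both of which are built into the definition of a compact Riemann surface.
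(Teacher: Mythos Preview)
Your argument is correct and is the standard proof of this classical fact. Note, however, that the paper does not actually supply its own proof of this theorem: it is stated as a background result from Riemann surface theory and immediately applied to the corollary that follows. So there is no ``paper's proof'' to compare against; your maximum-modulus-plus-connectedness argument is exactly the textbook justification one would cite here.
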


This theorem is central to our work, as can be shown in the following corollary:

\begin{corollary}
For a given $N\in\mathbb{Z}_{\ge 1}$, if $f\in\mathcal{M}\left(\Gamma_0(N)\right)$ has positive order at every cusp of $\Gamma_0(N)$, then $f$ must be a constant.
\end{corollary}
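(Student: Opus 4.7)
The plan is to reduce the claim directly to Theorem \ref{riemannsurfacetheorema} by checking that a modular function with positive order at every cusp induces a holomorphic function on the whole of the compact Riemann surface $\mathrm{X}_0(N)$. Since nothing beyond the definitions and the preceding theorem is needed, this should be a short argument.

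First I would recall that by the definition of a modular function (Definition \ref{DefnModular}), $f$ is holomorphic on the upper half-plane $\mathbb{H}$. Since $f$ is also $\Gamma_0(N)$-invariant, the induced map $\hat f : \mathrm{X}_0(N) \longrightarrow \mathbb{C}\cup\{\infty\}$ is holomorphic on the image of $\mathbb{H}$ in $\mathrm{X}_0(N)$. All remaining points of $\mathrm{X}_0(N)$ are the finitely many cusps, so I only need to check holomorphicity there.

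Next I would use the hypothesis. For each cusp $[a/c]_N$, the assumption $\mathrm{ord}^{(N)}_{a/c}(f)>0$ together with the local expansion in part (2) of Definition \ref{DefnModular} shows that the principal part \eqref{princpartmod} is empty and that the leading term has strictly positive exponent in the local coordinate $q^{\gcd(c^2,N)/N}$. Thus $\hat f$ extends holomorphically to each cusp with value $0$ there. Combined with the previous paragraph, this shows that $\hat f$ is holomorphic on all of $\mathrm{X}_0(N)$, using the one-to-one correspondence between $\mathcal{M}(\Gamma_0(N))$ and meromorphic functions on $\mathrm{X}_0(N)$ with poles confined to cusps, which is cited just before the corollary.

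Finally, since $\mathrm{X}_0(N)$ is a compact Riemann surface, Theorem \ref{riemannsurfacetheorema} forces $\hat f$ to be constant, hence $f$ is constant on $\mathbb{H}$. (As a bonus, since $\hat f$ vanishes at every cusp, the constant must in fact be $0$, though this sharper statement is not part of the assertion.) There is no real obstacle here: the only subtlety is the routine bookkeeping that positive cusp order in the sense of Definition \ref{DefnModular} is exactly what is needed for holomorphicity of $\hat f$ at the corresponding point of $\mathrm{X}_0(N)$, after which the corollary is immediate from the preceding theorem.
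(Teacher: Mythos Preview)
Your proposal is correct and follows exactly the line the paper intends: the corollary is stated in the paper without a separate proof, but the surrounding discussion makes clear it is an immediate consequence of Theorem~\ref{riemannsurfacetheorema} once one observes that positive order at every cusp forces $\hat f$ to be holomorphic on all of $\mathrm{X}_0(N)$. Your write-up just makes this implicit step explicit, and your parenthetical remark that the constant is in fact $0$ is a correct sharpening.
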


To illustrate the importance of these results, we consider $f,g\in\mathcal{M}^{\infty}\left(\Gamma_0(N)\right)$.

Suppose that the principal parts of each function match.  In that case, $f-g\in\mathcal{M}\left(\Gamma_0(N)\right)$ will have no poles whatsoever, and will thus be analytic on the whole of $\mathrm{X}_0(N)$.  Theorem \ref{riemannsurfacetheorema} forces $\hat{f}-\hat{g}$, and therefore $f-g$, to be a constant.  Of course, we can quickly compare the constant terms of both functions.

We now want to show how to construct modular functions.  Most useful to us in this case is Dedekind's eta function \cite[Chapter 3]{Knopp}:

\begin{align*}
\eta(\tau):= e^{\pi i\tau/12}\prod_{n=1}^{\infty}\left( 1-e^{2\pi i n\tau} \right).
\end{align*}

The following theorem is due to Newman \cite[Theorem 1]{Newman}:

\begin{theorem}\label{Newmanntheorem}
Let $f = \prod_{\delta | N} \eta(\delta\tau)^{r_{\delta}}$, with $\hat{r} = (r_{\delta})_{\delta | N}$ an integer-valued vector, for some $N\in\mathbb{Z}_{\ge 1}$.  Then $f\in\mathcal{M}\left(\Gamma_0(N)\right)$ if and only if the following apply:

\begin{enumerate}
\item $\sum_{\delta | N} r_{\delta} = 0;$
\item $\sum_{\delta | N} \delta r_{\delta} \equiv 0\pmod{24};$
\item $\sum_{\delta | N} \frac{N}{\delta}r_{\delta} \equiv 0\pmod{24};$
\item $\prod_{\delta | N} \delta^{|r_{\delta}|}$ is a perfect square.
\end{enumerate}

\end{theorem}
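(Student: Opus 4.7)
The plan is to verify the two defining requirements for membership in $\mathcal{M}(\Gamma_0(N))$---$\Gamma_0(N)$-invariance and a finite principal part at every cusp---by reducing both to the four stated arithmetic conditions via the classical transformation of Dedekind's eta function. Holomorphy on $\mathbb{H}$ itself is free, since each factor $\eta(\delta\tau)$ is holomorphic and nonvanishing there.

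First, I would recall the eta transformation
\[
\eta(\gamma\tau) = \varepsilon(\gamma)(c\tau+d)^{1/2}\eta(\tau),
\]
valid for $\gamma=\left(\begin{smallmatrix} a & b \\ c & d \end{smallmatrix}\right) \in \mathrm{SL}(2,\mathbb{Z})$, in which $\varepsilon(\gamma)$ is a $24$th root of unity built from a Dedekind sum and a Kronecker symbol. For $\delta\mid N$ and $\gamma\in\Gamma_0(N)$ (so $\delta\mid c$), the factorization
\[
\delta\cdot\gamma\tau = \tilde\gamma_\delta\cdot(\delta\tau), \qquad \tilde\gamma_\delta := \left(\begin{smallmatrix} a & b\delta \\ c/\delta & d \end{smallmatrix}\right)\in \mathrm{SL}(2,\mathbb{Z}),
\]
translates the action back to $\eta(\delta\tau)$ up to the scalar $\varepsilon(\tilde\gamma_\delta)(c\tau+d)^{1/2}$, giving
\[
f(\gamma\tau) = \left(\prod_{\delta\mid N}\varepsilon(\tilde\gamma_\delta)^{r_\delta}\right)(c\tau+d)^{(\sum_\delta r_\delta)/2} f(\tau).
\]
Condition (1) is forced by the need to kill the weight factor, after which I would split the remaining multiplier into its $24$th-root-of-unity and Kronecker-symbol components and use quadratic reciprocity together with a sweep over generators of $\Gamma_0(N)$---the translation $T$ giving condition (2), a matrix with $c \equiv 0 \pmod N$ giving condition (3), and the parity of the Kronecker pieces giving condition (4). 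Conversely, assuming (1)--(4), the same identities collapse the cocycle to $1$, yielding $\Gamma_0(N)$-invariance.

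Finally, for cusp behavior I would pick for each cusp $[a/c]_N$ a matrix $\gamma_{a,c}\in \mathrm{SL}(2,\mathbb{Z})$ carrying $\infty$ to $a/c$ and apply the same $\tilde\gamma_\delta$ rewrite to read the leading exponent of $f(\gamma_{a,c}\tau)$ directly from the eta $q$-expansion. This yields the Ligozat-type formula
\[
\mathrm{ord}_{a/c}^{(N)}(f) = \frac{N}{24\gcd(c^2,N)}\sum_{\delta\mid N}\frac{\gcd(c,\delta)^2\, r_\delta}{\delta},
\]
a rational number of bounded denominator, so each principal part is finite, verifying part (2) of Definition \ref{DefnModular}. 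The main obstacle is the second step: the multiplier system of $\eta$ is delicate, and matching its triviality against conditions (2), (3), and especially the perfect-square condition (4) requires careful Dedekind-sum and quadratic-reciprocity bookkeeping. All remaining pieces---holomorphy, the weight-zero reduction, and the cusp order formula---are routine applications of well-known identities.
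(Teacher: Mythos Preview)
The paper does not supply its own proof of this theorem: it is simply quoted as a classical result and attributed to Newman \cite[Theorem~1]{Newman}, so there is nothing in the paper to compare your argument against at the level of detail.

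That said, your outline is the standard route to Newman's criterion and is correct in its broad strokes. The weight-zero condition~(1) drops out exactly as you say; the translation $T=\left(\begin{smallmatrix}1&1\\0&1\end{smallmatrix}\right)$ gives condition~(2) because $\eta(\delta(\tau+1))=e^{2\pi i\delta/24}\eta(\delta\tau)$; and the conjugate condition~(3) comes from the Fricke-type matrix (or equivalently from the cusp at $0$). Your own caveat is well placed: the genuinely nontrivial bookkeeping is in showing that, once (1)--(3) hold, the residual Kronecker-symbol factor $\prod_\delta\left(\tfrac{\delta}{d}\right)^{r_\delta}$ is identically $1$ on $\Gamma_0(N)$ precisely when $\prod_\delta\delta^{|r_\delta|}$ is a perfect square. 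That step needs an explicit form of the eta multiplier (e.g.\ the Petersson--Knopp formula) and a careful quadratic-reciprocity argument; a vague appeal to ``Dedekind-sum bookkeeping'' would not suffice in a fully written-out proof. One minor point: the Ligozat order computation you invoke at the end is itself stated separately in the paper (Theorem~\ref{Ligozat}) and also cited rather than proved, so if you are proving Newman's theorem from scratch you should be prepared to justify that formula as well rather than treat it as given.
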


We now want to be able to examine the order of a given eta quotient at some cusp of the corresponding modular curve.  The following theorem can be found in \cite[Theorem 23]{Radu}, though it is attributed to Ligozat:

\begin{theorem}\label{Ligozat}
If $f = \prod_{\delta | N} \eta(\delta\tau)^{r_{\delta}}\in\mathcal{M}\left(\Gamma_0(N)\right)$, then the order of $f$ at the cusp $[a/c]_N$ is given by the following:

\begin{align*}
\mathrm{ord}_{a/c}^{(N)}(f) = \frac{N}{24\gcd{(c^2,N)}}\sum_{\delta | N} r_{\delta}\frac{\gcd{(c,\delta)}^2}{\delta}.
\end{align*}

\end{theorem}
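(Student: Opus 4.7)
The plan is to compute the $q$-expansion of $f$ at the cusp $[a/c]_N$ by applying the Dedekind eta transformation law factor by factor, then read off the leading exponent. First, choose coprime integers $a,c$ and complete them to $\gamma = \begin{pmatrix} a & b \\ c & d \end{pmatrix} \in \mathrm{SL}(2,\mathbb{Z})$, so that $\gamma \cdot i\infty = a/c$. It suffices to determine the lowest power of $q = e^{2\pi i \tau}$ appearing in $f(\gamma\tau)$ as $\tau \to i\infty$, then rescale by the width factor $N/\gcd(c^2,N)$ dictated by Definition \ref{DefnModular}.

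For each $\delta \mid N$, define the matrix $M_\delta := \begin{pmatrix} \delta a & \delta b \\ c & d \end{pmatrix}$ of determinant $\delta$, so that $\eta(\delta \gamma \tau) = \eta(M_\delta \tau)$. Set $g := \gcd(c,\delta)$. Since $\gcd(a,c) = 1$, the integers $\delta a/g$ and $c/g$ are coprime, so there exist $b',d' \in \mathbb{Z}$ with $(\delta a/g)d' - b'(c/g) = 1$. Defining $\gamma_\delta := \begin{pmatrix} \delta a/g & b' \\ c/g & d' \end{pmatrix} \in \mathrm{SL}(2,\mathbb{Z})$, a direct computation yields the Hermite-type factorization
\[
M_\delta = \gamma_\delta \cdot \begin{pmatrix} g & \beta \\ 0 & \delta/g \end{pmatrix}
\]
for some $\beta \in \mathbb{Z}$. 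Applying the classical eta transformation law $\eta(\gamma_\delta \sigma) = \varepsilon(\gamma_\delta)(c'\sigma + d')^{1/2}\eta(\sigma)$, where $\varepsilon$ is a 24th root of unity, with $\sigma = (g\tau + \beta)/(\delta/g)$, and then using the product formula $\eta(w) = e^{\pi i w/12}\prod_{n \geq 1}(1 - e^{2\pi i n w})$, one reads off that the leading power of $q$ in $\eta(\delta \gamma \tau)$ is $q^{g^2/(24\delta)}$, up to a nonzero multiplicative constant and an overall weight-$(1/2)$ automorphy factor that depends only on $\gamma_\delta$.

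Taking the product over all $\delta \mid N$ with exponents $r_\delta$, the leading exponent of $q$ in $f(\gamma\tau)$ is $\sum_{\delta\mid N} r_\delta \gcd(c,\delta)^2/(24\delta)$. The Newman conditions of Theorem \ref{Newmanntheorem} are exactly what is required for the remaining factors to be trivial: condition (1) cancels the half-integer automorphy factors since the total weight is zero, and conditions (2)--(4) force the accumulated 24th roots of unity from the eta multiplier system to equal $1$ on all of $\Gamma_0(N)$, which is precisely why $f$ descends to a modular function in the sense of Definition \ref{DefnModular}. Rescaling the $q$-exponent by the width $N/\gcd(c^2,N)$ to convert to the local parameter at the cusp produces the stated formula. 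The chief obstacle is bookkeeping: producing and verifying the Hermite decomposition of $M_\delta$ in all cases (including $c = 0$, where $\gcd(0,\delta) = \delta$ recovers the familiar formula at $\infty$), and carefully tracing how Newman's conditions on the $r_\delta$ force the cancellation of the half-integer powers and 24th-root phases accumulated from each factor $\eta(\delta\tau)^{r_\delta}$.
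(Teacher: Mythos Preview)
The paper does not prove this theorem; it is quoted from the literature with the attribution ``The following theorem can be found in \cite[Theorem 23]{Radu}, though it is attributed to Ligozat.'' So there is no proof in the paper to compare against.

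Your argument is the standard one and is essentially correct. A couple of small clarifications are worth making. First, the automorphy factor you pick up from $\gamma_\delta$ is $j(\gamma_\delta,T_\delta\tau)^{1/2}$, which does depend on $\tau$; using the cocycle relation $j(\gamma_\delta T_\delta,\tau)=j(\gamma_\delta,T_\delta\tau)\,j(T_\delta,\tau)$ with $j(T_\delta,\tau)=\delta/g$ and $j(M_\delta,\tau)=c\tau+d$ gives $j(\gamma_\delta,T_\delta\tau)=g(c\tau+d)/\delta$. Taking the product over $\delta$ with exponents $r_\delta$, the $\tau$-dependent part is exactly $(c\tau+d)^{\sum r_\delta/2}$, which vanishes by Newman's condition (1), while $\prod_\delta (g_\delta/\delta)^{r_\delta/2}$ is a nonzero constant irrelevant to the order. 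Second, your remark that conditions (2)--(4) force the accumulated $24$th roots of unity to equal $1$ is about why $f$ is $\Gamma_0(N)$-invariant; for the order computation itself at a general $\gamma\in\mathrm{SL}(2,\mathbb{Z})$ these roots need only be nonzero, which they are automatically. With those points tightened, your derivation of the leading $q$-exponent $\sum_\delta r_\delta\gcd(c,\delta)^2/(24\delta)$ and the rescaling by $N/\gcd(c^2,N)$ is correct and yields the stated formula.
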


Finally, we give a theorem from \cite[Theorem 39]{Radu} which we will use to identify our function $z$.

\begin{theorem}\label{orderboundmodfunc}
Suppose that for some $M\in\mathbb{Z}_{\ge 1}$ and an index-valued vector $(r_{\delta})_{\delta | M}$ we define the function $a(n)$ by
\begin{align*}
\sum_{n=1}^{\infty} a(n)q^n := \prod_{\delta | M} (q^{\delta};q^{\delta})_{\infty}^{r_{\delta}}
\end{align*}  Moreover, suppose that $m,t,N\in\mathbb{Z}_{\ge 0}$ such that $0\le t< m$, and that

\begin{align*}
f = \prod_{\lambda | N} \eta(\lambda\tau)^{s_{\delta}} \sum_{n=1}^{\infty} a(mn+t)q^n\in\mathcal{M}\left( \Gamma_0(N) \right).
\end{align*}  Then

\begin{align*}
\mathrm{ord}_{a/c}^{(N)}(f) \ge \frac{N}{\mathrm{gcd}(c^2,N)} \left(\min\limits_{0\le l\le m-1} \frac{1}{24}\sum_{\delta | M} r_{\delta} \frac{\mathrm{gcd}(\delta (a+l\cdot c\cdot \mathrm{gcd}(m^2-1,24)),mc)^2}{\delta m} + \frac{1}{24}\sum_{\lambda | N} \frac{s_{\lambda}\mathrm{gcd}(\lambda, c)^2}{\lambda}\right).
\end{align*}

\end{theorem}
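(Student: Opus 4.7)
The plan is to derive the claimed lower bound by combining the standard roots-of-unity sieving of arithmetic progressions with a cusp-order computation of Ligozat type. First I would write
\begin{align*}
q^{t/m} \sum_{n \ge 0} a(mn+t)\, q^n = \frac{1}{m}\sum_{l=0}^{m-1} \zeta_m^{-lt}\, \prod_{\delta \mid M} (\zeta_m^{l\delta} q^{\delta/m};\zeta_m^{l\delta} q^{\delta/m})_\infty^{r_\delta},
\end{align*}
where $\zeta_m = e^{2\pi i/m}$. After multiplying through by the eta prefactor $\prod_{\lambda \mid N} \eta(\lambda\tau)^{s_\lambda}$, the function $f$ decomposes into $m$ summands, each of which is (up to an algebraic scalar and a power of $q$) an eta-type quotient evaluated at the shifted argument $(\tau + l)/m$.

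Next, to find the leading $q$-power at the cusp $[a/c]_N$, I would apply the transformation $\tau \mapsto \gamma\tau$ for a chosen $\gamma = \left(\begin{smallmatrix} a & b \\ c & d \end{smallmatrix}\right) \in \mathrm{SL}(2,\mathbb{Z})$ sending $\infty$ to $a/c$, and compose with the shift $\tau \mapsto (\tau + l)/m$. The effective matrix acting on $\tau$ is
\begin{align*}
M_l = \begin{pmatrix} a + lc & b + ld \\ mc & md \end{pmatrix},
\end{align*}
which has determinant $m$. Writing $M_l$ in Hermite normal form as $U_l\,\gamma_l$ with $\gamma_l \in \mathrm{SL}(2,\mathbb{Z})$ and $U_l$ upper triangular, one can pull the $\gamma_l$ transformation across the eta product via its standard modular transformation law. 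A routine computation then shows that the factor $(q^\delta;q^\delta)_\infty^{r_\delta}$ contributes $\tfrac{1}{24}\, r_\delta\, \gcd(\delta(a+lc),mc)^2/(\delta m)$ to the leading exponent, while the original prefactor contributes $\tfrac{1}{24} \sum_{\lambda \mid N} s_\lambda \gcd(\lambda,c)^2/\lambda$, exactly as predicted by Theorem \ref{Ligozat}.

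To recover the factor $\gcd(m^2 - 1, 24)$ in the shift, I would exploit the $24$-periodicity of $\eta$'s multiplier system: within its residue class modulo $m$, the parameter $l$ may be replaced so that all $24$-th roots of unity arising from the $\eta$-transformations of the $M_l$ collapse into a single $\tau$-independent scalar. The admissible shifts turn out to be multiples of $c\cdot \gcd(m^2-1,24)$, yielding the expression $a + l\cdot c\cdot \gcd(m^2-1,24)$ inside the $\gcd$. This bookkeeping with $24$-th roots is the main technical obstacle, since one must verify carefully that the $\eta$-multiplier attached to each $M_l$ becomes a pure root of unity rather than contributing a fractional power of $q$. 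Finally, since $f(\gamma\tau)$ is the sum of $m$ such pieces, its leading $q$-exponent is at least the minimum of the per-term exponents over $l$; equality need not hold because leading terms from distinct $l$ may cancel, which is why the bound is stated as $\ge$ rather than $=$.
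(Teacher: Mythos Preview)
The paper does not actually prove this theorem; it is quoted without proof as \cite[Theorem 39]{Radu}. So there is no ``paper's own proof'' to compare against. That said, your sketch is in line with the standard argument underlying Radu's result: the roots-of-unity sieve to isolate the arithmetic progression, the decomposition of the composite matrix $\left(\begin{smallmatrix} a+lc & * \\ mc & * \end{smallmatrix}\right)$ to reduce each summand to an eta-quotient evaluated at a transformed argument, Ligozat's formula for the cusp contribution of the prefactor, and the observation that the order of a sum is bounded below by the minimum of the orders of the summands.

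One caution: your explanation of the $\gcd(m^2-1,24)$ factor is vague and not quite right as stated. The shift by multiples of $c\cdot\gcd(m^2-1,24)$ does not arise from ``collapsing roots of unity'' in the multiplier; rather, Radu shows that under the modularity hypotheses already imposed on $f$, the orbit of the residue $t$ under the relevant action is parametrized by $l \mapsto a + l\cdot c\cdot\gcd(m^2-1,24)$ modulo $m$, and the minimum over this orbit suffices. If you want a self-contained proof you will need to make this orbit computation precise, since it is where most of the delicacy lies; the rest of your outline is sound.
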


Finally, let us properly define $U_3$ and its effect on $\mathcal{M}\left( \Gamma_0(N) \right)$:

\begin{definition}
Let $f(q) = \sum_{m\ge M}a(m)q^m$.  Then
\begin{align}
U_3\left(f(q)\right) := \sum_{3m\ge M} a(3m)q^m.\label{defU3}
\end{align}
\end{definition}

We list some of the important properties of $U_3$.  The proofs can be found in \cite[Chapter 8]{Knopp}.

\begin{lemma}\label{impUprop}
Given two functions 
\begin{align*}
f(q) = \sum_{m\ge M}a(m)q^m,\ g(q) = \sum_{m\ge N}b(m)q^m,
\end{align*} any $\alpha\in\mathbb{C}$, a primitive third root of unity $\zeta$, and the convention that $q^{1/3}~:=~e^{2\pi i\tau/3}$, we have the following:
\begin{enumerate}
\item $U_3\left(\alpha\cdot f+g\right) = \alpha\cdot U_3\left(f\right) + U_3\left(g\right)$;
\item $U_3\left(f(q^3)g(q)\right) = f(q) U_3\left(g(q)\right)$;
\item $3\cdot U_3\left(f\right) = \sum_{r=0}^2 f\left( \zeta^rq^{1/3} \right)$.
\end{enumerate}
\end{lemma}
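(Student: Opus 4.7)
The plan is to verify all three properties directly from the definition (\ref{defU3}) of $U_3$ as a coefficient-extraction operator on formal Laurent series in $q$. Each identity reduces to careful bookkeeping of power-series coefficients, together with one standard orthogonality identity for roots of unity in the case of part (3). In particular, no analytic input is needed beyond the convention $q^{1/3} = e^{2\pi i\tau/3}$, which guarantees $(q^{1/3})^3 = q$ and $\zeta^3 = 1$.

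For part (1), I would write $\alpha f + g = \sum_m (\alpha\, a(m) + b(m))\, q^m$, apply (\ref{defU3}), and recognize the resulting series $\sum_m (\alpha\, a(3m) + b(3m))\, q^m$ as $\alpha\, U_3(f) + U_3(g)$. For part (2), I would substitute $q\mapsto q^3$ in $f$, so that $f(q^3) g(q) = \sum_k \Bigl( \sum_{3i+j = k} a(i) b(j) \Bigr) q^k$; applying $U_3$ keeps only the terms with $k = 3m$, forcing $j = 3(m-i)$, so the inner sum becomes $\sum_i a(i)\, b(3(m-i))$. Recognizing this as a Cauchy product yields the factorization $f(q)\cdot U_3(g(q))$.

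For part (3), the key input is the standard orthogonality relation
\begin{align*}
\sum_{r=0}^{2} \zeta^{rn} \;=\; \begin{cases} 3, & 3\mid n, \\ 0, & \text{otherwise.}\end{cases}
\end{align*}
Writing $f(\zeta^r q^{1/3}) = \sum_n a(n)\, \zeta^{rn}\, q^{n/3}$ and exchanging the order of summation gives $\sum_{r=0}^2 f(\zeta^r q^{1/3}) = \sum_n a(n) q^{n/3} \sum_{r=0}^{2} \zeta^{rn}$, which collapses to $3\sum_{m} a(3m)\, q^m = 3\, U_3(f)$.

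The main obstacle, such as it is, is purely notational: one must keep careful track of the branch of $q^{1/3}$ so that the substitutions in part (3) are consistent, and one must justify the interchange of summation in the power-series (formal) sense, where convergence is irrelevant. Since all three identities are formal consequences of coefficient extraction and a one-line orthogonality fact, I would not expect any deeper difficulty; indeed the author defers the detailed arguments to Knopp \cite[Chapter~8]{Knopp}.
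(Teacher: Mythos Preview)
Your argument is correct; each of the three identities is indeed a formal consequence of the coefficient-extraction definition (\ref{defU3}) together with the orthogonality relation $\sum_{r=0}^{2}\zeta^{rn}=3\cdot[3\mid n]$, exactly as you outline. The paper itself gives no proof of this lemma at all, simply deferring to \cite[Chapter~8]{Knopp}, so there is nothing to compare: your direct verification is the standard one and is precisely what one would find in Knopp.
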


As a final property of $U_3$, we give an important result from \cite[Lemma 7]{AtkinL}, which we will use in Section \ref{initialrelationssection}:

\begin{theorem}\label{uelleffectsmod}
For any $N\in\mathbb{Z}_{\ge 1}$ with $3 | N$ and $f\in\mathcal{M}\left( \Gamma_0(N) \right)$,
\begin{align*}
U_{3}\left(f \right)&\in\mathcal{M}\left( \Gamma_0(N) \right).
\end{align*}  Moreover, if $3^2 | N$, then
\begin{align*}
U_{3}\left(f \right)&\in\mathcal{M}\left( \Gamma_0(N/3) \right).
\end{align*}
\end{theorem}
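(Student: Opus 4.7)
My approach is to reduce both claims to an explicit matrix decomposition using the identity
\[3 U_3(f)(\tau) = \sum_{r=0}^{2} f\!\left(\frac{\tau + r}{3}\right)\]
from part (3) of Lemma \ref{impUprop}. The substitution $\tau \mapsto (\tau + r)/3$ corresponds to left multiplication by the rational matrix $M_r := \begin{pmatrix} 1 & r \\ 0 & 3 \end{pmatrix}$ of determinant $3$. The central step is to verify that for every $\gamma = \begin{pmatrix} a & b \\ c & d \end{pmatrix} \in \mathrm{SL}(2,\mathbb{Z})$ with $N \mid c$ (for the first claim) or $(N/3) \mid c$ (for the second), the product $M_r \gamma$ factors as $\sigma_r M_{r'}$ for some $\sigma_r \in \Gamma_0(N)$ and a permutation $r \mapsto r'$ of $\{0,1,2\}$. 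Modular invariance of $U_3(f)$ then follows immediately by applying $f \circ \sigma_r = f$ and reindexing.

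Carrying out the decomposition, matching entries forces $\sigma_r = \begin{pmatrix} a+rc & B \\ 3c & d - cr' \end{pmatrix}$ with $B = (b + rd - (a+rc)r')/3$, and a short calculation gives $\det \sigma_r = ad - bc = 1$ independently of $r'$. The only constraint is thus integrality of $B$, which (using $3 \mid c$) reduces to $a r' \equiv b + rd \pmod{3}$. In both cases the divisibility hypotheses force $3 \mid c$, so $ad \equiv 1 \pmod{3}$ and both $a$ and $d$ are units mod $3$; hence a unique $r' \in \{0,1,2\}$ exists for each $r$, and the affine map $r \mapsto r'$ is a bijection. Since $N \mid 3c$ in both cases, $\sigma_r \in \Gamma_0(N)$, and so $3 U_3(f)(\gamma\tau) = \sum_r f(\sigma_r M_{r'} \tau) = \sum_r f(M_{r'} \tau) = 3 U_3(f)(\tau)$, establishing invariance of $U_3(f)$ under $\Gamma_0(N)$ (resp.\ $\Gamma_0(N/3)$).

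It remains to verify the growth condition at each cusp of Definition \ref{DefnModular}. At $[\infty]$ this is immediate from (\ref{defU3}): subsampling a Laurent series with finite principal part produces one with finite principal part. For any other cusp $[a/c]_N$, I would apply the same sort of decomposition but now with an arbitrary $\gamma_0 \in \mathrm{SL}(2,\mathbb{Z})$ sending $\infty$ to $a/c$, writing $U_3(f)(\gamma_0 \tau)$ as a finite sum of evaluations of $f$ at matrices of the form $\sigma \cdot M$ with $\sigma \in \Gamma_0(N)$ and $M$ of bounded denominator; the finiteness of $f$'s principal parts at every $\Gamma_0(N)$-cusp then transfers term-by-term to $U_3(f)$. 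The \textbf{main obstacle} is precisely this cusp bookkeeping: while the modular-invariance argument is structurally clean once the right ansatz for $\sigma_r$ is in hand, tracking the coset representatives arising from $M_r \gamma_0$ at every cusp simultaneously is the step where indexing and sign errors are easiest to introduce, and it is the part of the proof most likely to demand careful case analysis.
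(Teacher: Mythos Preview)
The paper does not actually prove this theorem: it is stated as ``an important result from \cite[Lemma 7]{AtkinL}'' and simply cited. Your proposal therefore goes well beyond what the paper offers, and the matrix-decomposition argument you outline is essentially the classical proof (and is the approach Atkin and Lehner themselves take). Your computation of $\sigma_r$ and the verification that $r\mapsto r'$ is a bijection on $\{0,1,2\}$ are correct; in particular your observation that $3\mid c$ in both cases (from $3\mid N$ in the first and $3\mid N/3$ in the second) is exactly the hinge that makes the congruence $ar'\equiv b+rd\pmod 3$ uniquely solvable and simultaneously guarantees $N\mid 3c$, so $\sigma_r\in\Gamma_0(N)$. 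The one place where your write-up is still a sketch rather than a proof is the growth condition at an arbitrary cusp: the idea of factoring $M_r\gamma_0$ for a general $\gamma_0\in\mathrm{SL}(2,\mathbb{Z})$ is right, but when $3\nmid c$ the coset $M_r\gamma_0$ need not land in $\Gamma_0(N)\cdot M_{r'}$, and one instead picks up a representative of a different cusp of $\Gamma_0(N)$ together with a fractional power of $q$; the finiteness of the principal part still follows, but the bookkeeping genuinely splits into cases according to $\gcd(c,3)$. You correctly flagged this as the delicate step.
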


\subsection{Proof Setup}\label{proofsetup}

The initial steps of our proof are essentially identical to those of the classical theory.  We need to define our functions $L_{\alpha}$, together with the operators $U^{(i)}$.  Hereafter, we denote $q:=e^{2\pi i\tau}$, for $\tau\in\mathbb{H}$.  We begin by defining an important auxiliary function:

\begin{align}
\mathcal{A} &:= q\frac{D_2(q)}{D_2(q^9)} = q\frac{(q^2;q^2)^2_{\infty}(q^9;q^9)^7_{\infty}}{(q;q)^7_{\infty}(q^{18};q^{18})^2_{\infty}}.\label{Zdef}
\end{align}  We can use Theorem \ref{Newmanntheorem} to verify that $\mathcal{A}(\tau)\in\mathcal{M}\left( \Gamma_0(18) \right)$.

The appeal of $\mathcal{A}$, besides its modularity, is that applying the classical $U_3$ operator onto it will produce the generating function $L_1$ that we need.

We will define the operators $U^{(i)}$ by 

\begin{align}
U^{(1)}(f) :=& U_3(f),\\
U^{(0)}(f) :=& U_3(\mathcal{A}\cdot f),\\
U^{(\alpha)}(f) :=& U^{(i)}(f),\ \alpha\equiv i\pmod{2}.
\end{align}

Our main generating functions $L_{\alpha}$ for each case of Theorem \ref{Thm12} are defined as follows:
\begin{align}
L_0 &:= 1,\label{L0d}\\
L_{2\alpha-1}(\tau) &= \frac{(q^3;q^3)^7_{\infty}}{(q^6;q^6)^2_{\infty}}\cdot\sum_{n=0}^{\infty} d_2(3^{2\alpha-1}n + \lambda_{2\alpha-1})q^{n+1},\label{Lod}\\
L_{2\alpha}(\tau) &= \frac{(q;q)^7_{\infty}}{(q^2;q^2)^2_{\infty}}\cdot\sum_{n=0}^{\infty} d_2(3^{2\alpha}n + \lambda_{2\alpha})q^{n+1},\label{Led}
\end{align} with the $\lambda_{\alpha}$ defined as

\begin{align}
\lambda_{2\alpha-1} &:= \frac{1+5\cdot 3^{2\alpha-1}}{8},\label{lamodef}\\
\lambda_{2\alpha} &:= \frac{1+7\cdot 3^{2\alpha}}{8}\label{lamedef}.
\end{align}  In either case, $\lambda_{\alpha}\in\mathbb{Z}$ are the minimal positive solutions to 

\begin{align*}
8x\equiv 1\pmod{3^{\alpha}}.
\end{align*}  Therefore, one could write $L_{\alpha}$ in the form

\begin{align*}
L_{\alpha} = \Phi_{\alpha}(q)\cdot \sum_{8n\equiv 1\bmod{3^{\alpha}}} d_2(n)q^{\left\lfloor n/3^{\alpha} \right\rfloor+1}.
\end{align*}  Notice that

\begin{align}
U_3(\mathcal{A}) &= U_3\left( q\frac{D_2(q)}{D_2(q^9)} \right) = \frac{1}{D_2(q^3)}\cdot U_3\left( q D_2(q) \right) = \frac{1}{D_2(q^3)}\cdot U_3\left( \sum_{n=1}^{\infty}d_2(n-1)q^n \right)\label{isittruethough}\\
&=\frac{1}{D_2(q^3)}\cdot \sum_{n=1}^{\infty}d_2(3n-1)q^n =\frac{1}{D_2(q^3)}\cdot \sum_{n=0}^{\infty}d_2(3n+2)q^{n+1}\label{isittruethougha}\\
&= L_1.\label{isittruethoughb}
\end{align}  More generally, the $U_3$ operator provides us with a convenient means of accessing $L_{\alpha+1}$ from $L_{\alpha}$, as the following lemma shows:

\begin{lemma}
For all $\alpha\ge 0$, we have
\begin{align}
L_{\alpha+1} &= U^{(\alpha)}\left( L_{\alpha} \right).\label{Lete}
\end{align}
\end{lemma}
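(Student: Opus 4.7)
The plan is to verify $L_{\alpha+1}=U^{(\alpha)}(L_\alpha)$ by a straightforward case split on the parity of $\alpha$, relying essentially on part~(2) of Lemma~\ref{impUprop}. The base case $\alpha=0$ is essentially already done: since $L_0=1$ and $U^{(0)}(f)=U_3(\mathcal{A}\cdot f)$, the telescoping computation in \eqref{isittruethough}--\eqref{isittruethoughb} shows $U^{(0)}(L_0)=U_3(\mathcal{A})=L_1$. For $\alpha\ge 1$, I would first record the convenient rewrites $(q^3;q^3)^7_{\infty}/(q^6;q^6)^2_{\infty}=1/D_2(q^3)$ and $(q;q)^7_{\infty}/(q^2;q^2)^2_{\infty}=1/D_2(q)$, so that the prefactor of $L_\alpha$ becomes a function of $q^3$ (odd step) or, after absorbing the factor $q\,D_2(q)/D_2(q^9)$ from $\mathcal{A}$, a function of $q^9$ (even step). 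In either situation the algebraic rule $U_3(f(q^3)g(q))=f(q)U_3(g(q))$ pulls the prefactor through $U_3$ and produces exactly the prefactor required for $L_{\alpha+1}$.

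\textbf{Odd step.} For $\alpha=2\alpha'-1$, I write $L_\alpha=D_2(q^3)^{-1}\sum_{n\ge 0}d_2(3^{2\alpha'-1}n+\lambda_{2\alpha'-1})q^{n+1}$, apply property~(2) to extract $D_2(q^3)^{-1}$ as $D_2(q)^{-1}$, and compute $U_3$ of the remaining series by picking the terms with $n+1\equiv 0\pmod 3$. After reindexing $n=3k-1$, the $q$-exponents become $k$ (running over $k\ge 1$), and the $d_2$-arguments become $3^{2\alpha'}k-3^{2\alpha'-1}+\lambda_{2\alpha'-1}$. Shifting $k\mapsto n+1$ it suffices to check the arithmetic identity
\begin{equation*}
-3^{2\alpha'-1}+\lambda_{2\alpha'-1}+3^{2\alpha'}=\lambda_{2\alpha'},
\end{equation*}
which is immediate from the closed forms \eqref{lamodef}--\eqref{lamedef}. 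This yields $U_3(L_{2\alpha'-1})=L_{2\alpha'}$.

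\textbf{Even step.} For $\alpha=2\alpha'$, multiplying $L_{2\alpha'}$ by $\mathcal{A}=qD_2(q)/D_2(q^9)$ cancels the $D_2(q)^{-1}$ prefactor and yields $D_2(q^9)^{-1}\sum_{n\ge 0}d_2(3^{2\alpha'}n+\lambda_{2\alpha'})q^{n+2}$. Property~(2) again pulls $D_2(q^9)^{-1}$ through $U_3$ as $D_2(q^3)^{-1}$, and the $U_3$-action on the sum selects $n+2\equiv 0\pmod 3$, i.e.\ $n=3k-2$. The analogous arithmetic identity
\begin{equation*}
-2\cdot 3^{2\alpha'}+\lambda_{2\alpha'}+3^{2\alpha'+1}=\lambda_{2\alpha'+1},
\end{equation*}
again a one-line consequence of \eqref{lamodef}--\eqref{lamedef}, produces exactly $L_{2\alpha'+1}$.

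There is no real obstacle here; the ``hard part'' is merely bookkeeping on two fronts. First, one must confirm that the prefactors attached to $L_\alpha$ in \eqref{Lod}--\eqref{Led} are precisely calibrated so that $U_3$ (odd $\alpha$) or $U_3\circ(\mathcal{A}\cdot-)$ (even $\alpha$) carries one prefactor to the next via Lemma~\ref{impUprop}(2). Second, one must line up the shifts in the $d_2$-argument with the explicit formulas for $\lambda_\alpha$; this is automatic once one remembers that $\lambda_\alpha$ is defined to be the least positive solution of $8x\equiv 1\pmod{3^\alpha}$, so the residue selected by $U_3$ at each step is necessarily the next $\lambda$.
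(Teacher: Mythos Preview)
Your proposal is correct and follows essentially the same approach as the paper: a parity split, application of Lemma~\ref{impUprop}(2) to pull the $D_2$-prefactors through $U_3$, and the two arithmetic identities relating $\lambda_{2\alpha'-1}$, $\lambda_{2\alpha'}$, $\lambda_{2\alpha'+1}$. The only minor addition is that you treat the base case $\alpha=0$ explicitly via \eqref{isittruethough}--\eqref{isittruethoughb}, which the paper leaves implicit.
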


\begin{proof}
For any $\alpha\ge 1$,

\begin{align*}
U^{(2\alpha-1)}\left( L_{2\alpha-1} \right)=U_3\left( L_{2\alpha-1} \right) &= U_3\left( \frac{1}{D_2(q^3)} \sum_{n\ge 0} d_2\left(3^{2\alpha-1}n + \lambda_{2\alpha-1}\right)q^{n+1} \right)\\
&= \frac{1}{D_2(q)}\cdot U_3\left( \sum_{n\ge 1}d_2\left(3^{2\alpha-1}(n-1) + \lambda_{2\alpha-1}\right)q^{n} \right)\\
&= \frac{1}{D_2(q)}\cdot \sum_{3n\ge 1}d_2\left(3^{2\alpha-1}(3n-1) + \lambda_{2\alpha-1}\right)q^{n}\\
&= \frac{1}{D_2(q)}\cdot \sum_{n\ge 1}d_2\left(3^{2\alpha}n-3^{2\alpha-1} + \lambda_{2\alpha-1}\right)q^{n}\\
&= \frac{1}{D_2(q)}\cdot \sum_{n\ge 0}d_2\left(3^{2\alpha}n+3^{2\alpha}-3^{2\alpha-1} + \lambda_{2\alpha-1}\right)q^{n+1}\\
&= \frac{1}{D_2(q)}\cdot \sum_{n\ge 0}d_2\left(3^{2\alpha}n+\lambda_{2\alpha}\right)q^{n+1}.
\end{align*}  Similarly,

\begin{align*}
U^{(2\alpha)}\left( L_{2\alpha} \right)=U_3\left( \mathcal{A}\cdot L_{2\alpha} \right) &= U_3\left( q\frac{D_2(q)}{D_2(q^9)}\frac{1}{D_2(q)} \sum_{n\ge 0} d_2\left(3^{2\alpha}n + \lambda_{2\alpha}\right)q^{n+1} \right)\\
&= \frac{1}{D_2(q^3)}\cdot U_3\left( \sum_{n\ge 2}d_2\left(3^{2\alpha}(n-2) + \lambda_{2\alpha}\right)q^{n} \right)\\
&= \frac{1}{D_2(q^3)}\cdot \sum_{3n\ge 3}d_2\left(3^{2\alpha}(3n-2) + \lambda_{2\alpha}\right)q^{n}\\
&= \frac{1}{D_2(q^3)}\cdot \sum_{n\ge 1}d_2\left(3^{2\alpha+1}n-2\cdot 3^{2\alpha} + \lambda_{2\alpha}\right)q^{n}\\
&= \frac{1}{D_2(q^3)}\cdot \sum_{n\ge 0}d_2\left(3^{2\alpha+1}(n+1)-2\cdot 3^{2\alpha} + \lambda_{2\alpha}\right)q^{n+1}\\
&= \frac{1}{D_2(q^3)}\cdot \sum_{n\ge 0}d_2\left(3^{2\alpha+1}n+\lambda_{2\alpha+1}\right)q^{n+1}.
\end{align*}
\end{proof}

Our goal henceforth will be to understand the functions $L_{\alpha}$

\subsection{Our Hauptmodul}\label{TheModularEquations}

Due to the fact that $\mathcal{A}$ is modular over $\Gamma_0(18)$, the functions $L_{\alpha}$ are modular over $\Gamma_0(6)$.  As described in the Introduction, we want to identify two key functions: one which we denote $z$, which will annihilate the poles of $L_{\alpha}$ at all cusps away from some fixed cusp---we will choose $[0]_6$---and one useful Hauptmodul at the cusp, which we will denote $x$.  In the case of $\Gamma_0(6)$ we are lucky, since we can compute a function $z$ which is also a Hauptmodul.

If we examine the lower bounds for the poles of $L_1$ using Theorem \ref{orderboundmodfunc}, then we find that

\begin{align}
\mathrm{ord}_{\infty}^{(6)}(L_1) &\ge 1/3,\\
\mathrm{ord}_{1/3}^{6}(L_1) &\ge 4/3,\\
\mathrm{ord}_{1/2}^{(6)}(L_1) &\ge -1,\\
\mathrm{ord}_{0}^{(6)}(L_1) &\ge -4.
\end{align}  We therefore need our function $z$ to have positive order at the cusp $[1/2]_6$.

A suitable function is the eta quotient

\begin{align}
z = z(\tau) :=& \frac{(q^2;q^2)_{\infty}^9(q^3;q^3)_{\infty}^3}{(q;q)_{\infty}^9(q^6;q^6)_{\infty}^3}\label{zdef}.
\end{align}  The orders of $z$ at the cusps of $\mathrm{X}_0(6)$ can be computed by Theorem \ref{Ligozat}:

\begin{align}
\mathrm{ord}_{\infty}^{(6)}(z) &= 0,\\
\mathrm{ord}_{1/3}^{6}(z) &= 0,\\
\mathrm{ord}_{1/2}^{(6)}(z) &= 1,\\
\mathrm{ord}_{0}^{(6)}(z) &= -1.
\end{align}  This ensures that $zL_1\in\mathcal{M}^{0}\left( \Gamma_0(6) \right)$.

Notice that $z$ also happens to be a Hauptmodul at the cusp $[0]_6$.  However, much more interesting is the fact that, since $(1-q)^9\equiv (1-q^3)^3\pmod{9}$, we have

\begin{align*}
z\equiv 1\pmod{9}.
\end{align*}  Therefore, if we take

\begin{align*}
x = (z-1)/9,
\end{align*} which is also a Hauptmodul with integer coefficients, then the factor of $z=1+9x$ will not affect any common factor that $z\cdot L_1$ has in terms of $x$.  In particular, if $3|z\cdot L_1$, then $3| L_1$.

We make a brief digression to discuss how to compute functions akin to $z$ and $x$ in more general cases.  Were we to work over $\Gamma_0(2\ell)$ in which $\ell$ is an odd prime, we can always construct an eta quotient, with a similar form to that of $z$, which is congruent to $1\pmod{\ell}$.  For $\ell\ge 5$ a good candidate function is

\begin{align*}
\frac{(q^2;q^2)_{\infty}^{\ell}(q^{\ell};q^{\ell})_{\infty}}{(q;q)_{\infty}^{\ell}(q^{2\ell};q^{2\ell})_{\infty}}.
\end{align*}  In the cases that $\ell=3,5$, the functions we have given are Hauptmoduln at $[0]_{2\ell}$.  In this manner, we can easily construct the analogous function to $x$ by subtracting 1 from the analogue of $z$ and dividing by $\ell$.  For $\ell\ge 7$ we will have $\mathfrak{g}\left( \mathrm{X}_0(2\ell) \right)\ge 1$, so that the analogues of $z$ and $x$ are no longer Hauptmoduln, and the question of properly representing the associated functions $L_{\alpha}$ is necessarily more difficult.

Returning to the case of $\Gamma_0(6)$, it is interesting to note that $x$ is also an eta quotient---indeed, the function defined in (\ref{xdef}).  By Theorem \ref{Ligozat}, the orders of $x$ are:

\begin{align}
\mathrm{ord}_{\infty}^{(6)}(x) &= 1,\label{Ligoxn1a}\\
\mathrm{ord}_{1/3}^{6}(x) &= 0,\\
\mathrm{ord}_{1/2}^{(6)}(x) &= 0,\\
\mathrm{ord}_{0}^{(6)}(x) &= -1.\label{Ligoxn1b}
\end{align}  We prove that the two eta quotient representations of $z$ and $x$ are related by

\begin{align}
z = 1+9x\label{zequals1plus9x}
\end{align} in Section \ref{initialrelationssection}.

The functions $x,z$ exhibit certain recurrences which can be expressed with a modular equation.

\begin{theorem}
Define
\begin{align*}
a_0(\tau) &= -(81 x^3 + 18 x^2 + x)\\
a_1(\tau) &=-(1296 x^3 + 279 x^2 + 15 x)\\
a_2(\tau) &= -(5184 x^3 + 1080 x^2 + 57 x).
\end{align*}  Then we have

\begin{align}
x^3+\sum_{j=0}^2 a_j(3\tau) x^j = 0.\label{modX}
\end{align}
\end{theorem}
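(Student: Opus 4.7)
The strategy is a standard cusp-by-cusp finiteness argument. Define
\[
F(\tau) := x(\tau)^3 + a_2(3\tau)\,x(\tau)^2 + a_1(3\tau)\,x(\tau) + a_0(3\tau),
\]
understood as a polynomial with integer coefficients in the two functions $x(\tau)$ and $x(3\tau)$; the claim is that $F\equiv 0$. My first step is to establish $F\in\mathcal{M}(\Gamma_0(18))$. Since $x(\tau)\in\mathcal{M}(\Gamma_0(6))\subset\mathcal{M}(\Gamma_0(18))$, it remains to verify $x(3\tau)\in\mathcal{M}(\Gamma_0(18))$. Applying $\tau\mapsto 3\tau$ to the eta-quotient (\ref{xdef}) rewrites $x(3\tau)$ as a quotient in $\eta(3\tau),\eta(6\tau),\eta(9\tau),\eta(18\tau)$, and the four congruences of Theorem \ref{Newmanntheorem} can be checked by hand for the resulting exponent vector over $N=18$. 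Hence $F$ lies in $\mathcal{M}(\Gamma_0(18))$.

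The second, and main, step is to show that $F$ is holomorphic at every cusp of $\mathrm{X}_0(18)$. The curve has $8$ cusps; representatives can be found using (\ref{equivalentcuspsA})--(\ref{equivalentcuspsB}), yielding $\infty,0,1/2,1/3,2/3,1/6,1/9,1/18$. At each cusp $[a/c]_{18}$, Theorem \ref{Ligozat} produces $\mathrm{ord}_{a/c}^{(18)}(x(\tau))$ and $\mathrm{ord}_{a/c}^{(18)}(x(3\tau))$ directly from the two eta exponent vectors. The order of $F$ at each cusp is bounded below by the minimum of the orders of the four constituent monomials $x(\tau)^i x(3\tau)^j$ appearing in $F$. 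The task reduces to verifying, at each of the $8$ cusps, that this minimum is nonnegative. At any cusp where $x(\tau)$ has a pole, the $x(\tau)^3$ contribution must be matched by pole contributions from $a_j(3\tau)\,x(\tau)^j$ with $j<3$, and symmetrically for $x(3\tau)$; the precise integer coefficients $81,18,1,1296,279,15,5184,1080,57$ in $a_0,a_1,a_2$ are chosen exactly so as to force these cancellations.

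Once holomorphicity at every cusp is established, the corollary of Theorem \ref{riemannsurfacetheorema} forces $F$, regarded as a meromorphic function on the compact Riemann surface $\mathrm{X}_0(18)$, to be constant. From (\ref{Ligoxn1a})--(\ref{Ligoxn1b}) we have $\mathrm{ord}_{\infty}^{(6)}(x(\tau))=1$, so the $q$-expansion of $x(\tau)$ begins at $q^1$ and that of $x(3\tau)$ at $q^3$; every monomial appearing in $F$ therefore begins at a strictly positive $q$-power, so the constant term of the $q$-expansion of $F$ is $0$. The constant $F$ must then be $0$, proving (\ref{modX}). In practice one would still compute a short initial segment of the $q$-expansion of $F$ explicitly to confirm the cancellation (this is the sort of check handled in the Mathematica supplement).

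The principal obstacle is the bookkeeping in the second step: one must track the pole orders of both $x(\tau)$ and $x(3\tau)$ simultaneously at all $8$ cusps of $\mathrm{X}_0(18)$ and verify that the specific coefficients in the $a_j$ produce the needed cancellations, particularly at the cusps (such as $[0]_{18}$) where both functions contribute negative orders. Everything else is routine eta-quotient arithmetic and $q$-series manipulation.
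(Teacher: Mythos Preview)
Your overall architecture---show $F\in\mathcal{M}(\Gamma_0(18))$, show it is holomorphic at every cusp, conclude it is constant, check the constant is $0$---is sound, and the first and third steps are fine. The gap is in the second step. You write that ``the task reduces to verifying, at each of the $8$ cusps, that this minimum is nonnegative,'' but at several cusps this minimum is \emph{not} nonnegative. From Table~\ref{tablew0}, $x(\tau)$ has order $-3$ at $[0]_{18}$ and $x(3\tau)$ has order $-1$ at $[0]_{18}$, $[1/3]_{18}$, $[2/3]_{18}$; at $[0]_{18}$ the na\"ive bound on $\mathrm{ord}(F)$ is $-9$, and at $[1/3]_{18}$, $[2/3]_{18}$ it is $-3$. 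You then appeal to cancellation ``forced'' by the specific coefficients, but that is precisely the identity you are trying to prove. To actually verify such cancellation you would have to compute local expansions of $x(\tau)$ and $x(3\tau)$ at those cusps (via eta transformation formulae), not just their orders---this is genuinely more than bookkeeping, and your proposal gives no mechanism for it. The $q$-expansion check you mention at the end only sees the cusp $[\infty]_{18}$ and cannot certify holomorphicity at $[0]_{18}$, $[1/3]_{18}$, $[2/3]_{18}$.

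The paper avoids this problem with a single normalization. Since $x(3\tau)^{-1}\in\mathcal{M}^{\infty}(\Gamma_0(18))$, one multiplies $F$ by $x(3\tau)^{-9}$: the zeros of $x(3\tau)^{-9}$ at the troublesome cusps absorb all potential poles of $F$ there (order $9$ against at worst $-9$ at $[0]_{18}$, and order $9$ against $-3$ at $[1/3]_{18}$, $[2/3]_{18}$), so $x(3\tau)^{-9}F\in\mathcal{M}^{\infty}(\Gamma_0(18))$ \emph{without} any delicate cancellation argument. Now the only possible pole is at $\infty$, and a finite $q$-expansion check there---exactly the computation you already describe---shows the principal part and constant vanish, hence $F=0$. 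In short, the missing idea is to clear denominators with $x(3\tau)^{-9}$ so that all verification collapses to a single principal-part computation at $\infty$.
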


\begin{proof}
Because $x(3\tau)^{-1}$ is a modular function with a pole at only one cusp of its associated modular curve, we may prove this equation using cusp analysis.  See the end of Section \ref{initialrelationssection}.
\end{proof}

\begin{theorem}
Define
\begin{align*}
b_0(\tau) &=-z^3\\
b_1(\tau) &=1 + 9 z + 9 z^2 - 16 z^3\\
b_2(\tau) &=-2 - 9 z + 72 z^2 - 64 z^3\\
b_3(\tau) &=1.
\end{align*}  Then we have

\begin{align}
\sum_{k=0}^3 b_k(3\tau) z^k = 0.\label{modZ}
\end{align}
\end{theorem}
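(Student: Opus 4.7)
The plan is to adapt the cusp-analysis argument that will justify the modular equation~(\ref{modX}). Set $Z := z(3\tau)$ and define
\[
F(\tau) := z^3 + b_2(3\tau) z^2 + b_1(3\tau) z + b_0(3\tau),
\]
the claim being $F \equiv 0$. Since $z \in \mathcal{M}(\Gamma_0(6)) \subset \mathcal{M}(\Gamma_0(18))$ and the substitution $\tau \mapsto 3\tau$ intertwines $\Gamma_0(18)$ with $\Gamma_0(6)$, we have $Z \in \mathcal{M}(\Gamma_0(18))$, whence $F \in \mathcal{M}(\Gamma_0(18))$. The strategy is to show $F$ has nonnegative order at every cusp of $\mathrm{X}_0(18)$, invoke the corollary of Theorem~\ref{riemannsurfacetheorema} to conclude $F$ is constant, and then verify by direct $q$-expansion that this constant is zero.

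The first step is to enumerate the eight cusps $[a/c]_{18}$ of $\Gamma_0(18)$ via the equivalence criterion~(\ref{equivalentcuspsA})--(\ref{equivalentcuspsB}), indexed by divisors $c \in \{1,2,3,6,9,18\}$. Using the eta-quotient representation~(\ref{zdef}) of $z$ together with Theorem~\ref{Ligozat}, one computes $\mathrm{ord}_{a/c}^{(18)}(z)$ at each cusp; by scaling the argument one likewise obtains an eta-quotient representation for $Z$, and Theorem~\ref{Ligozat} yields $\mathrm{ord}_{a/c}^{(18)}(Z)$. The standard lower bound
\[
\mathrm{ord}_{a/c}^{(18)}(F) \;\ge\; \min_{(j,k)} \bigl( k\,\mathrm{ord}_{a/c}^{(18)}(z) + j\,\mathrm{ord}_{a/c}^{(18)}(Z) \bigr),
\]
taken over the monomials $Z^j z^k$ appearing in $F$ with nonzero coefficient, then produces the desired holomorphy at every cusp---provided the specific coefficients prescribed in the statement conspire to force cancellation of the negative contributions.

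Once holomorphy is in hand, $F$ is a constant, and the proof concludes by expanding $z$ and $Z$ as integer power series in $q$ (legitimate because $\mathrm{ord}_\infty^{(6)}(z) = 0$), substituting into $F$, and verifying that the constant term vanishes---a finite calculation suitable for the accompanying Mathematica supplement. The main obstacle is the cusp-by-cusp order bookkeeping across all eight cusps of $\Gamma_0(18)$: at cusps where neither $z$ nor $Z$ is holomorphic, naive lower bounds on the orders of individual monomials already force a pole, and one must track the leading coefficients of the principal parts simultaneously to exhibit the cancellation. A cleaner conceptual alternative invokes the degree of the covering $\mathrm{X}_0(18) \to \mathrm{X}_0(6)$ induced by $\tau \mapsto 3\tau$, which equals $[\Gamma_0(6) : \Gamma_0(18)] = 3$; this forces $z(\tau)$ to satisfy a unique monic cubic relation over $\mathbb{C}(Z)$, reducing the verification of the specific coefficients $b_k$ to matching a bounded number of terms of the $q$-expansion and bypassing the full cusp analysis altogether.
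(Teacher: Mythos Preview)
Your proposal is correct in principle but takes a much longer route than the paper. The paper's proof is a single line: substitute $x=(z-1)/9$ into the already-established modular equation~(\ref{modX}) for $x$ and simplify. Since the relation $z=1+9x$ is proved independently in~(\ref{zequals1plus9x}), the cubic modular equation for $z$ is an immediate algebraic consequence of the cubic modular equation for $x$; no fresh cusp analysis on $\Gamma_0(18)$ is required.

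Your approach---computing orders of $z(\tau)$ and $z(3\tau)$ at all eight cusps of $\mathrm{X}_0(18)$, tracking principal-part cancellations at the cusps where both have poles, and then invoking Theorem~\ref{riemannsurfacetheorema}---would work, and your remark about the degree-$3$ covering $\mathrm{X}_0(18)\to\mathrm{X}_0(6)$ forcing a unique monic cubic over $\mathbb{C}(z(3\tau))$ is exactly the right conceptual picture. But all of that labor is already packaged inside the proof of~(\ref{modX}); once~(\ref{modX}) is in hand, the affine change of variable $x\mapsto (z-1)/9$ transports it to~(\ref{modZ}) for free. The paper's route buys you the result with no additional analytic input, only polynomial algebra.
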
  \noindent Note: our notation of $b_3 =1$ will be useful in later computations.

\begin{proof}
Substitute $x=(z-1)/9$ into (\ref{modX}) and simplify.
\end{proof}

\section{Algebra Structure}\label{algebrastructure}

\subsection{Localized Ring}

We will construct the algebra structure needed for our proof.  Define the multiplicatively closed set

\begin{align}
\mathcal{S} := \left\{ (1+9x)^n : n\in\mathbb{Z}_{n\ge 0} \right\}.\label{Sdef}
\end{align}

We want to prove that for every $\alpha\ge 1$, $L_{\alpha}$ is a member of the localization of $\mathbb{Z}[x]$ at ${\mathcal{S}}$, which we will denote by $\mathbb{Z}[x]_{\mathcal{S}}$.

We need to define two subsets of $\mathbb{Z}[x]_{\mathcal{S}}$.  These will utilize discrete functions:

\begin{definition}
Let $n\ge 1$.  A function $s:\mathbb{Z}\rightarrow\mathbb{Z}$ is discrete if $s(m)=0$ for sufficiently large $m$.  A function $h:\mathbb{Z}^{n}\rightarrow\mathbb{Z}$ is a discrete array if for any fixed $(m_1,m_2,...,m_{n-1})\in\mathbb{Z}^{n-1}$, $h(m_1,m_2,...,m_{n-1},m)$ is discrete with respect to $m$.
\end{definition}

We now construct sets that will contain $L_{\alpha}$.

\begin{align*}
\theta(m) &:= \begin{cases} 
   0, &  1\le m\le 3, \\
   2, & 4\le m\le 6, \\
   \left\lfloor \frac{3m-3}{4} \right\rfloor - 1,       & m\ge 7,
  \end{cases}
\end{align*}

We take some $n\ge 1$, and define the following:

\begin{align}
\mathcal{V}^{(0)}_n:=& \left\{ \frac{1}{(1+9x)^n}\sum_{m\ge 1} s(m)\cdot 3^{\theta(m)}\cdot x^m \right\},\\
\mathcal{V}^{(1)}_n:=& \left\{ \frac{1}{(1+9x)^n}\sum_{m\ge 1} s(m)\cdot 3^{\theta(m)}\cdot x^m : \sum_{m=1}^3s(m)\equiv 0\bmod 9 \right\}.
\end{align}  Notice from (\ref{L1S}) that

\begin{align}
\frac{1}{3}L_1\in\mathcal{V}^{(1)}_1.
\end{align}  Moreover, we must take note of the fact that $\mathcal{V}^{(1)}_n\subset \mathcal{V}^{(0)}_n$ for every $n\ge 1$.

\subsection{Recurrence Relation}

We can use our modular equations to describe how the $U^{(i)}$ operators affect elements of $\mathcal{V}^{(i)}_n$ through certain recurrence relations.

\begin{lemma}
For all $m,n\in\mathbb{Z}$, and $i\in\{0,1\}$, we have
\begin{align}
U^{(i)}\left( \frac{x^m}{(1+9x)^n} \right) = -\frac{1}{(1+9x)^3}\sum_{j=0}^{2}\sum_{k=1}^{3} a_j(\tau)b_k(\tau)\cdot U^{(i)}\left( \frac{x^{m+j-3}}{(1+9x)^{n-k}} \right).\label{UmodX}
\end{align}
\end{lemma}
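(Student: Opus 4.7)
The plan is to combine the two modular equations (\ref{modX}) and (\ref{modZ}) in order to rewrite $x^m/(1+9x)^n$ as a sum in which every outer factor is a function of $3\tau$, and then invoke the identity $U_3(f(3\tau)\,g(\tau)) = f(\tau)\,U_3(g(\tau))$ from Lemma \ref{impUprop}(2) to pull these outer factors outside $U^{(i)}$. Since $U^{(1)}=U_3$ and $U^{(0)}(f)=U_3(\mathcal{A}\,f)$ with $\mathcal{A}(\tau)$ depending only on $\tau$, the auxiliary factor $\mathcal{A}$ simply rides along inside the operator, and both cases $i=0,1$ can be handled uniformly with the same algebraic manipulation.

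Concretely, first I would solve (\ref{modX}) for $x^3$ and multiply by $x^{m-3}$, obtaining the field-of-modular-functions identity
\begin{align*}
x^m \;=\; -\sum_{j=0}^{2} a_j(3\tau)\, x^{m+j-3}.
\end{align*}
Next, using $b_3=1$ together with $b_0(3\tau) = -z(3\tau)^3 \neq 0$, equation (\ref{modZ}) rearranges to
\begin{align*}
\frac{1}{z^n} \;=\; \frac{1}{z(3\tau)^3}\sum_{k=1}^{3} b_k(3\tau)\,\frac{1}{z^{n-k}}.
\end{align*}
Multiplying these two expressions together gives
\begin{align*}
\frac{x^m}{(1+9x)^n} \;=\; -\frac{1}{z(3\tau)^3}\sum_{j=0}^{2}\sum_{k=1}^{3} a_j(3\tau)\, b_k(3\tau)\cdot \frac{x^{m+j-3}}{(1+9x)^{n-k}},
\end{align*}
in which the entire outer coefficient depends only on $3\tau$.

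To finish, I would apply $U^{(i)}$ to both sides. Iterating Lemma \ref{impUprop}(1),(2) across the several $3\tau$-factors converts $a_j(3\tau)\,b_k(3\tau)/z(3\tau)^3$ into $a_j(\tau)\,b_k(\tau)/z(\tau)^3$ and moves this coefficient outside the operator; in the $i=0$ case the factor $\mathcal{A}$ remains paired with $x^{m+j-3}/(1+9x)^{n-k}$ and reassembles into $U^{(0)}$ of that inner expression. Substituting $z = 1+9x$ produces (\ref{UmodX}) exactly. The only subtlety worth checking is that $b_0(3\tau)$ is nonzero in the field of modular functions, which is immediate because $z(3\tau)$ has Fourier expansion $1 + O(q^3)$ and is therefore not identically zero; beyond that, the argument is pure algebraic bookkeeping and presents no genuine obstacle.
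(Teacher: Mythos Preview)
Your proposal is correct and follows essentially the same route as the paper: both arguments use the modular equation for $z$ to rewrite $(1+9x)^{-n}$, the modular equation for $x$ to rewrite $x^m$, observe that the resulting outer coefficients depend only on $3\tau$, and then invoke Lemma~\ref{impUprop}(2) (with the factor $\mathcal{A}^{1-i}$ riding along) to pull them through $U^{(i)}$. The only cosmetic difference is the order in which the two modular equations are applied.
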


\begin{proof}
We can write 

\begin{align}
b_0(3\tau) &= -\sum_{k=1}^3 b_k(3\tau) z^k,\nonumber \\
1 &=-\frac{1}{b_0(3\tau)} \sum_{k=1}^3 b_k(3\tau) z^k,\nonumber\\
z^{-n} &=-\frac{1}{b_0(3\tau)} \sum_{k=1}^3 b_k(3\tau) z^{-(n-k)},\label{xnegn1}
\end{align} for $n\ge 1$.  Writing $z$ in terms of $x$, we have

\begin{align}
(1+9x)^{-n} &=-\frac{1}{b_0(3\tau)} \sum_{k=1}^3 b_k(3\tau) (1+9x)^{-(n-k)}.\label{xnegn2}
\end{align}  Multiplying both sides by $x^m$ for some $m\ge 1$, then

\begin{align}
\frac{x^m}{(1+9x)^n} &= -\frac{1}{b_0(3\tau)}\sum_{k=1}^{3} b_k(3\tau)\cdot \frac{x^{m}}{(1+9x)^{n-k}}\nonumber\\
&= -\frac{1}{(1+9y(3\tau))^3}\sum_{k=1}^{3} b_k(3\tau)\cdot \frac{x^{m}}{(1+9x)^{n-k}}.\label{xnegn3}
\end{align}  Now we use the modular equation for $x$:

\begin{align}
\frac{x^m}{(1+9x)^n} &= -\frac{1}{b_0(3\tau)}\sum_{k=1}^{3} b_k(3\tau)\cdot\sum_{j=0}^2 a_j(3\tau) \frac{x^{m+j-3}}{(1+9x)^{n-k}}\nonumber\\
&= -\frac{1}{(1+9x(3\tau))^3}\sum_{j=0}^2 \sum_{k=1}^{3}a_j(3\tau) b_k(3\tau)\cdot \frac{x^{m+j-3}}{(1+9x)^{n-k}}.\label{xnegn4}
\end{align}  Multiply both sides by $\mathcal{A}^{1-i}$:

\begin{align}
\mathcal{A}^{1-i}\cdot\frac{x^m}{(1+9x)^n} &= -\frac{1}{b_0(3\tau)}\sum_{k=1}^{3} b_k(3\tau)\cdot\sum_{j=0}^2 a_j(3\tau) \frac{x^{m+j-3}}{(1+9x)^{n-k}}\nonumber\\
= -\frac{1}{(1+9x(3\tau))^3}&\sum_{j=0}^2\sum_{k=1}^{3}a_j(3\tau) b_k(3\tau)\cdot \mathcal{A}^{1-i}\cdot \frac{x^{m+j-3}}{(1+9x)^{n-k}}.\label{xnegn5}
\end{align}

Recall that by Line 2 of Lemma \ref{impUprop}, for any functions $f(\tau),g(\tau)$,

\begin{align*}
U_3(f(3\tau)\cdot g(\tau)) = f(\tau)\cdot U_3(g(\tau)).
\end{align*} Because of this, we have

\begin{align}
U_3&\left(\mathcal{A}^{1-i}\cdot \frac{x^m}{(1+9x)^n} \right)\nonumber\\ =& -\frac{1}{(1+9x)^3}\sum_{j=0}^{2}\sum_{k=1}^{3} a_j(\tau)b_k(\tau)\cdot U_3\left(\mathcal{A}^{1-i}\cdot \frac{x^{m+j-3}}{(1+9x)^{n-k}} \right).\label{xnegn6}
\end{align}

\end{proof}

\subsection{The Action of $U^{(i)}$ on $\mathcal{V}^{(0)}_n$}

We are going to build certain general relations for $U^{(i)}\left( \frac{x^m}{(1+9x)^n} \right)$.  For this we will define the following arithmetic functions:

\begin{align*}
\pi_0(m,r) &:= \mathrm{max}\left( 0, \left\lfloor \frac{3r-m}{4} \right\rfloor - 1 \right),\\
\pi_1(m,r) &:= \begin{cases} 
   0, &  1\le m\le 3 \text{ and } r=1, \\
   \left\lfloor \frac{3r+1}{4} \right\rfloor,       & 1\le m\le 3 \text{ and } r\ge 2,\\
   \left\lfloor \frac{3r-m+1}{4} \right\rfloor, & m\ge 4
  \end{cases}
\end{align*}  We also define the useful function

\begin{align}
\phi(l) :&=\begin{cases}
& 1,\ l=0\\
&\displaystyle{\left\lfloor \frac{3l+12}{4} \right\rfloor},\ l>0.
\end{cases}
\end{align}  We first prove an important set of inequalities which will become important shortly.

\begin{lemma}\label{piphippibound}
For $l>0$, we have
\begin{align}
\pi_i(m,r-l) + \phi(l) - \pi_i(m,r)\ge 2.\label{inexeq}
\end{align}  for $l=0$,
\begin{align}
\pi_i(m,r-l) + \phi(l) - \pi_i(m,r)= 1.\label{inexeql0}
\end{align}
\end{lemma}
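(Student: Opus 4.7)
The plan is to reduce the claim to a single tight bound and then dispatch a short case analysis. First I would handle $l=0$: since $\phi(0)=1$ and $\pi_i(m,r-0)=\pi_i(m,r)$, we get $\pi_i(m,r-l)+\phi(l)-\pi_i(m,r)=1$, which is exactly (\ref{inexeql0}).

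For $l>0$ I would observe that
\[
\phi(l) = \lfloor(3l+12)/4\rfloor = \lfloor 3l/4\rfloor + 3,
\]
so that (\ref{inexeq}) is equivalent to
\[
\pi_i(m,r) - \pi_i(m,r-l) \le \lfloor 3l/4\rfloor + 1.
\]
The workhorse throughout will be the elementary floor estimate $\lfloor A\rfloor - \lfloor B\rfloor \le \lceil A-B\rceil$. Applied with $A,B$ equal to the two ``generic'' floor arguments appearing in the definitions (so that $A-B=3l/4$), this yields exactly $\lfloor 3l/4\rfloor + 1$. Consequently, whenever $\pi_i(m,r)$ and $\pi_i(m,r-l)$ come from the same generic branch --- the $\lfloor(3r-m)/4\rfloor-1$ branch for $\pi_0$ (with the $\max$ nontrivial), the $\lfloor(3r-m+1)/4\rfloor$ branch for $\pi_1$ with $m\ge 4$, and the $\lfloor(3r+1)/4\rfloor$ branch for $\pi_1$ with $1\le m\le 3$, $r\ge 2$ --- the bound is immediate.

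What remains are the boundary cases where the two sides sit in different pieces of the definitions. For $\pi_0$ the $\max$ can truncate $\pi_0(m,r-l)$ to $0$ while $\pi_0(m,r)>0$; here the forcing condition $3(r-l)-m\le 7$ together with the routine estimate $\lfloor 3l/4 + 7/4\rfloor \le \lfloor 3l/4\rfloor + 2$ gives
\[
\pi_0(m,r)-0 \;=\; \lfloor(3r-m)/4\rfloor-1 \;\le\; \lfloor(3l+7)/4\rfloor-1 \;\le\; \lfloor 3l/4\rfloor+1.
\]
For $\pi_1$ with $1\le m\le 3$ the analogous transition across $r=2$ must be examined: when $r-l\le 1<r$ one has $\pi_1(m,r-l)=0$, and I would verify the bound directly by comparing $\lfloor(3r+1)/4\rfloor$ to $\lfloor 3l/4\rfloor+1$, checking the tightest instance $r=l+1$ (which turns out to give equality).

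The main obstacle I anticipate is not any single calculation but the bookkeeping: the piecewise structures of $\pi_0$ (two branches via $\max$) and $\pi_1$ (three branches) combine to yield several combinations of branches on the two sides of the inequality, and all must be checked. Once organised as a short table, every subcase reduces either to the floor-difference inequality $\lfloor A\rfloor-\lfloor B\rfloor\le \lceil A-B\rceil$ or to a trivial integer computation, so I do not expect any deeper obstruction.
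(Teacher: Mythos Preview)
Your proposal is correct and follows essentially the same route as the paper: both arguments handle $l=0$ trivially and then, for $l>0$, perform a case analysis over the branches of $\pi_i$, reducing each subcase to an elementary floor/ceiling estimate. Your repackaging---writing $\phi(l)=\lfloor 3l/4\rfloor+3$ and bounding $\pi_i(m,r)-\pi_i(m,r-l)\le \lceil 3l/4\rceil\le \lfloor 3l/4\rfloor+1$---is the dual formulation of the paper's direct lower bound via $\lfloor M/4\rfloor+\lfloor N/4\rfloor\ge \lfloor(M+N-3)/4\rfloor$, so the two proofs are really the same computation viewed from opposite sides.
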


\begin{proof}
We begin by considering $l=0$.  We have $\pi_i(m,r-l)=\pi_i(m,r)$, and 

\begin{align}
\pi_i(m,r-0) +& \phi(0) - \pi_i(m,r)\\
&= \phi(0)= 1.
\end{align}  Henceforth, let us assume that $l>0$.\\

\noindent \underline{\textbf{$\mathbf{i=0}$}}\\

\noindent For $\pi_0(m,r-l)=\left\lfloor \frac{3(r-l)-m}{4} \right\rfloor - 1$ and $\pi_0(m,r)=\left\lfloor \frac{3r-m}{4} \right\rfloor - 1$, we have

\begin{align}
\pi_0(m,r-l) +& \phi(l) - \pi_0(m,r)\\
&\ge \left\lfloor \frac{3(r-l)-m}{4} \right\rfloor - 1 + \left\lfloor \frac{3l+12}{4} \right\rfloor - \left\lfloor \frac{3r-m}{4} \right\rfloor + 1\\
&\ge \left\lfloor \frac{3r-m+9}{4} \right\rfloor - \left\lfloor \frac{3r-m}{4} \right\rfloor\\
&\ge 2.
\end{align}

\noindent For $\pi_0(m,r-l)=0$ and $\pi_0(m,r)=\left\lfloor \frac{3r-m}{4} \right\rfloor - 1 >0$, we must have $\left\lfloor \frac{3(r-l)-m}{4} \right\rfloor\le 1$.  We therefore have

\begin{align}
\pi_0(m,r-l) +& \phi(l) - \pi_0(m,r)\\
&= \left\lfloor \frac{3l+12}{4} \right\rfloor - \left\lfloor \frac{3r-m}{4} \right\rfloor + 1\\
&= \left\lfloor \frac{3l}{4} \right\rfloor + 3 - \left\lfloor \frac{3(r-l)-m}{4}+\frac{3l}{4} \right\rfloor + 1\\
&\ge \left\lfloor \frac{3l}{4} \right\rfloor + 3 - \left\lfloor \frac{3(r-l)-m}{4} \right\rfloor - \left\lfloor\frac{3l}{4} \right\rfloor - 1 + 1\\
&\ge 3 - \left\lfloor \frac{3(r-l)-m}{4} \right\rfloor\\
&\ge 2.
\end{align}

\noindent For $\pi_0(m,r-l)=\pi_0(m,r)=0$, we have

\begin{align}
\pi_0(m,r-l) +& \phi(l) - \pi_0(m,r) = \phi(l)\ge 3.
\end{align}

\noindent \underline{\textbf{$\mathbf{i=1}$}}\\

\noindent For $m\ge 4$ we have

\begin{align}
\pi_1(m,r-l) +& \phi(l) - \pi_1(m,r)\\
&\ge \left\lfloor \frac{3(r-l)-m+1}{4} \right\rfloor + \left\lfloor \frac{3l+12}{4} \right\rfloor - \left\lfloor \frac{3r-m+1}{4} \right\rfloor\\
&\ge \left\lfloor \frac{3r-m+10}{4} \right\rfloor - \left\lfloor \frac{3r-m+1}{4} \right\rfloor\\
&\ge 2.
\end{align}

\noindent For $1\le m\le 3$ and $r-l\ge 2$, we have

\begin{align}
\pi_1(m,r-l) +& \phi(l) - \pi_1(m,r)\\
&\ge \left\lfloor \frac{3(r-l)+1}{4} \right\rfloor + \left\lfloor \frac{3l+12}{4} \right\rfloor - \left\lfloor \frac{3r+1}{4} \right\rfloor\\
&\ge \left\lfloor \frac{3r+10}{4} \right\rfloor - \left\lfloor \frac{3r+1}{4} \right\rfloor\\
&\ge 2.
\end{align}

\noindent For $1\le m\le 3$, $r\ge 2$ and $r-l\le 1$, we have $l\ge r-1$, and therefore

\begin{align}
\pi_1(m,r-l) +& \phi(l) - \pi_1(m,r)\\
&\ge 0 + \left\lfloor \frac{3l+12}{4} \right\rfloor - \left\lfloor \frac{3r+1}{4} \right\rfloor\\
&\ge \left\lfloor \frac{3r+9}{4} \right\rfloor - \left\lfloor \frac{3r+1}{4} \right\rfloor\\
&\ge 2
\end{align}

\noindent For $1\le m\le 3$, $r\le 2$, we have

\begin{align}
\pi_1(m,r-l) +& \phi(l) - \pi_1(m,r)=\phi(l)\ge 2.
\end{align}
\end{proof}  We now state the key theorem on the behavior of $U^{(i)}$ on elements of $\mathcal{V}^{(0)}_n$.

\begin{theorem}\label{thmuyox}
There exist discrete arrays $h_1, h_0 : \mathbb{Z}^{3}\rightarrow\mathbb{Z}$ such that\\
\begin{align*}
U^{(1)}&\left( \frac{x^m}{(1+9x)^n} \right) = \frac{1}{(1+9x)^{3n}}\sum_{r\ge \left\lceil m/3\right\rceil} h_1(m,n,r)\cdot 3^{\pi_1(m,r)}\cdot x^r,\\
U^{(0)}&\left( \frac{x^m}{(1+9x)^n} \right) = \frac{1}{(1+9x)^{3n+1}}\sum_{r\ge \left\lceil (m+1)/3\right\rceil} h_0(m,n,r)\cdot 3^{\pi_0(m,r)}\cdot x^r.
\end{align*}
\end{theorem}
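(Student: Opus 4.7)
The plan is to proceed by strong induction on $n$, applying the recurrence relation (\ref{UmodX}) to reduce each $U^{(i)}\left(x^m/(1+9x)^n\right)$ to expressions involving $U^{(i)}\left(x^{m+j-3}/(1+9x)^{n-k}\right)$ with $n-k < n$. The base cases correspond to the smallest values of $n$ (together with small $m$), which are exactly the 18 initial relations to be established in Section \ref{initialrelationssection}; these themselves reduce algebraically to the six fundamental identities $U^{(i)}(x^m)$ for $i \in \{0,1\}$ and $m \in \{0,1,2\}$, provable by the cusp-analytic finiteness principles following from Theorem \ref{riemannsurfacetheorema}.

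For the inductive step, I would apply (\ref{UmodX}) and, invoking the induction hypothesis on each inner term, substitute the claimed expansion with denominator $(1+9x)^{3(n-k)}$ (or $(1+9x)^{3(n-k)+1}$ for $i=0$). Multiplying by $-a_j(\tau) b_k(\tau)/(1+9x)^3$ yields a term whose denominator is $(1+9x)^{3(n-k)+3} \leq (1+9x)^{3n}$ since $k \geq 1$, and one can then normalize to $(1+9x)^{3n}$ by multiplying numerator and denominator by $(1+9x)^{3k-3}$, which preserves integrality. The minimal exponent $r \geq \lceil m/3 \rceil$ (respectively $\lceil (m+1)/3 \rceil$) follows from the fact that each $a_j(\tau) b_k(\tau)$, written as a polynomial in $x$, vanishes at $x=0$ (i.e., has no constant term), together with the induction hypothesis bound applied to $x^{m+j-3}$.

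The heart of the argument, and the main obstacle, is the delicate 3-adic bookkeeping. One must expand $a_j(\tau) b_k(\tau)$ as a polynomial in $x$, track the 3-adic valuation of the coefficient of $x^l$ via $\phi(l)$, and verify that after convolution with the induction-hypothesis coefficient $3^{\pi_i(m+j-3,\,r-l)}$, the resulting coefficient of $x^r$ is divisible by $3^{\pi_i(m,r)}$. This requires two ingredients: first, a monotonicity observation that $\pi_i(m',r) \geq \pi_i(m,r)$ for $m' \leq m$, which absorbs the shift from $m$ to $m+j-3$ (since $j-3 \leq -1$); second, and most importantly, the inequality of Lemma \ref{piphippibound}, which gives $\pi_i(m,r-l) + \phi(l) \geq \pi_i(m,r) + 2$ for $l > 0$, so that even after absorbing losses from expanding $1/(1+9x)^3$ and from edge cases, the required 3-adic divisibility persists with room to spare.

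The subtlest points will be handling the piecewise definitions of $\pi_i$ and $\theta$ at the boundary cases (small $m, r$, and cases where $m+j-3$ may be zero or negative); the extra slack of $\geq 2$ in Lemma \ref{piphippibound} for $l > 0$ is precisely what makes these boundary cases survive the induction. For $i = 1$, the special condition $\sum_{m=1}^{3} s(m) \equiv 0 \pmod 9$ that characterizes $\mathcal{V}^{(1)}_n$ is automatic from the explicit form of $\pi_1$ on $\{1,2,3\}$ combined with the bound $\pi_1(m, r) \geq 0$ there; for $i = 0$, no such extra condition is required. Once the inductive bound on 3-adic valuations is established, the discrete array $h_i(m,n,r)$ is simply defined as the integer coefficient of $3^{\pi_i(m,r)} x^r/(1+9x)^{3n}$ (or $3n+1$) in the resulting expansion, and its discreteness follows because the recurrence produces only finitely many nonzero coefficients at each stage.
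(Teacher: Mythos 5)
Your overall architecture---descend via the recurrence (\ref{UmodX}), bottom out at the $18$ initial relations, and track $3$-adic valuations through the convolution---matches the paper's in spirit, but there is a genuine gap in the way you propose to run the induction, and it surfaces exactly at the cases $1\le m\le 3$. A single strong induction on $n$ using the full recurrence (\ref{UmodX}) replaces $x^m$ by $x^{m+j-3}$ with $j\in\{0,1,2\}$, so for $1\le m\le 3$ it produces terms with exponent $m+j-3\le 0$, which lie outside the scope of the statement; more importantly, even after extending the definitions, the generic convolution bound cannot deliver the \emph{enhanced} exponent $\pi_1(m,r)=\left\lfloor\frac{3r+1}{4}\right\rfloor$ that the theorem asserts for $1\le m\le 3$, $r\ge 2$. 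Concretely, the coefficient of $x^l$ in $-a_j(\tau)b_k(\tau)(1+9x)^{3(k-1)}$ carries only $3^{\left\lfloor(3l+j)/4\right\rfloor}$, not $3^{\phi(l)}$: the valuation $\phi(l)$ (and hence Lemma \ref{piphippibound}) belongs to the coefficients of $-b_k(\tau)(1+9x)^{3(k-1)}$ \emph{alone}, i.e.\ to a recurrence that does not touch $m$. With the actual valuations $\left\lfloor(3l+j)/4\right\rfloor$ one gets, e.g., for $j=0$, $l=1$, $r=5$ the contribution $\left\lfloor(3(r-1)+1)/4\right\rfloor+0=3<4=\left\lfloor(3r+1)/4\right\rfloor$, so the claimed exponent fails along your route. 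The weaker floor exponent one \emph{can} prove this way is not enough downstream (it would put additional negative entries into Table \ref{tablew2} and break Theorem \ref{gofrom1to0}).

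The paper resolves this by splitting the argument into two separate inductions. Lemma \ref{lemmaMN} runs the double descent in $(m,n)$ with the full recurrence and the superadditivity $\left\lfloor M/4\right\rfloor+\left\lfloor N/4\right\rfloor\ge\left\lfloor(M+N-3)/4\right\rfloor$, but only establishes the plain floor exponent, which coincides with $\pi_i(m,r)$ for $m\ge 4$. Lemma \ref{lemmaN} then fixes $m$ (in particular $1\le m\le 3$) and descends in $n$ only, using just the modular equation for $z$; there the coefficients do carry $3^{\phi(l)}$, Lemma \ref{piphippibound} applies, and the enhanced $\pi_i(m,r)$ propagates because the dominant ($l=0$) term comes from $U^{(i)}\left(x^m/(1+9x)^{n-3}\right)$ with the same $m$. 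Your proposal needs this second, $m$-preserving recurrence (or an equivalent device) to be a proof; as written, the monotonicity observation plus Lemma \ref{piphippibound} applied to the $a_jb_k$ coefficients does not close the case $1\le m\le 3$. (A smaller point: the condition $\sum_{m=1}^{3}s(m)\equiv 0\pmod 9$ defining $\mathcal{V}^{(1)}_n$ plays no role in Theorem \ref{thmuyox} itself---it enters only later, in Theorem \ref{gofrom1to0}, to rescue the single failing entry $r=1$.)
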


We prove this theorem by first demonstrating the following two lemmas.  Because the proofs work identically whether we apply $U^{(1)}$ or $U^{(0)}$, we denote the differing denominator powers as $3n+\kappa$, lower bounds for $r$ by $\left\lceil (m+\delta)/3\right\rceil$, and powers of 3 by $\left\lfloor \frac{3r-m+\mu}{4} \right\rfloor$.  We define $\kappa=1,0$, $\delta=1,0$, $\mu=-4,1$ depending on whether $i=0,1$.

\begin{lemma}\label{lemmaMN}
Let $\kappa,\delta,\mu\in\mathbb{Z}_{\ge 0}$ be fixed, and fix $i$ to either 0 or 1.  If there exists a discrete array $h_i$ such that
\begin{align}
U^{(i)}\left( \frac{x^{m}}{(1+9x)^{n}} \right) &= \frac{1}{(1+9x)^{3n+\kappa}}\sum_{r\ge \left\lceil (m+\delta)/3 \right\rceil} h_i(m,n,r)\cdot 3^{\left\lfloor \frac{3r-m+\mu}{4} \right\rfloor}\cdot x^{r}
\end{align} for $1\le m\le 3$, $1\le n\le 3$, then such a relation can be made to hold for all $m\ge 1$, $n\ge 1$.
\end{lemma}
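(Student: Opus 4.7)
The claim will be extended from the nine base cases to all $(m,n)$ by a double strong induction driven by two decoupled recurrences extracted from the modular equations (\ref{modX}) and (\ref{modZ}). Because the base cases cover the full block $1 \le m, n \le 3$, and each recurrence step decreases either $m$ or $n$ by at most three, the induction closes cleanly.

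\textbf{The two recurrences.} From equation (\ref{xnegn3}), multiplied by $\mathcal{A}^{1-i}$ and passed through $U_3$ via Lemma~\ref{impUprop}(2), one obtains the $z$-only recurrence
\[
U^{(i)}\!\left(\frac{x^m}{(1+9x)^n}\right) \;=\; \frac{1}{(1+9x)^3}\sum_{k=1}^{3} b_k(\tau)\cdot U^{(i)}\!\left(\frac{x^m}{(1+9x)^{n-k}}\right),
\]
valid for $n \ge 4$. Similarly, rewriting (\ref{modX}) as $x^m = -\sum_{j=0}^2 a_j(3\tau)\, x^{m-3+j}$ for $m \ge 3$ and multiplying by $\mathcal{A}^{1-i}/(1+9x)^n$ before applying $U_3$ yields the $x$-only recurrence
\[
U^{(i)}\!\left(\frac{x^m}{(1+9x)^n}\right) \;=\; -\sum_{j=0}^{2} a_j(\tau)\cdot U^{(i)}\!\left(\frac{x^{m-3+j}}{(1+9x)^n}\right),
\]
valid for $m \ge 4$. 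First, fixing $m \in \{1,2,3\}$ and running strong induction on $n$ through the $z$-recurrence extends the base hypothesis to every $n \ge 1$. Then, with those now established as an enriched base layer, strong induction on $m$ via the $x$-recurrence completes the claim for every $m, n \ge 1$.

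\textbf{Bookkeeping and main obstacle.} Three invariants must survive each inductive step: (a) the denominator power $3n+\kappa$; (b) the leading exponent $r \ge \lceil(m+\delta)/3\rceil$ of the numerator series; and (c) the $3$-adic lower bound $3^{\lfloor(3r-m+\mu)/4\rfloor}$ on each of its coefficients. Invariant (a) is routine because $b_k(\tau)\in\mathbb{Z}[1+9x]$ and $a_j(\tau)\in\mathbb{Z}[x]$, so at worst one multiplies numerator and denominator by a suitable power of $(1+9x)$ to obtain a common denominator. Invariant (b) is tractable as well: in the $z$-step, $m$ is unchanged and $b_k$ has a nonzero constant term; in the $x$-step, each $a_j$ vanishes at $x=0$, and the resulting extra factor of $x$ precisely offsets the shift from $m$ to $m-3+j$, with the minimum attained at $j=0$. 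Invariant (c) is the delicate one and accounts for essentially all of the work: it reduces, coefficient by coefficient, to checking the inequalities of Lemma~\ref{piphippibound} against the sharp $3$-adic valuation lower bounds for the coefficients of $a_j$ and $b_k$ encoded by $\phi$. Matching the index shifts produced by the recurrences against $\phi(l)$ and $\pi_i$ is exactly what Lemma~\ref{piphippibound} is designed to facilitate, and I expect that this matching—together with the rebalancing of powers of $(1+9x)$ between numerator and denominator—will constitute the main obstacle.
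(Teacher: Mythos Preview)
Your decoupled two-step induction is correct and is a mild but genuine variant of the paper's argument. The paper runs a \emph{single} induction on $(m,n)$ using the combined recurrence~(\ref{UmodX}), so that the coefficient object to control is the product $w(j,k)=-a_j(\tau)b_k(\tau)(1+9x)^{3(k-1)}$; the key step is the verification that $w(j,k)=\sum_l v(j,k,l)\,3^{\lfloor(3l+j)/4\rfloor}x^l$, after which the elementary inequality $\lfloor M/4\rfloor+\lfloor N/4\rfloor\ge\lfloor(M+N-3)/4\rfloor$ closes invariant~(c) in one stroke. Your scheme instead first freezes $m\in\{1,2,3\}$ and runs the $z$-only recurrence in $n$ (which is exactly the mechanism of the paper's Lemma~\ref{lemmaN}), and then runs an $x$-only recurrence in $m$. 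This buys you simpler coefficient objects---$\hat w(k)=-b_k(1+9x)^{3(k-1)}$ and $a_j$ separately rather than their product---at the price of two passes instead of one.

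One imprecision to flag: invariant~(c) here does \emph{not} reduce to Lemma~\ref{piphippibound}. That lemma controls the refined exponents $\pi_i(m,r)$ used in Lemma~\ref{lemmaN}, whereas the present lemma only involves the coarser power $\lfloor(3r-m+\mu)/4\rfloor$. What you actually need for your $x$-step is the finite check that the coefficient of $x^l$ in $-a_j$ has $3$-adic valuation at least $\lfloor(3l+j)/4\rfloor$ (nine cases, all true), combined with the floor inequality above; for your $z$-step the analogous check on $\hat w(k)$ (bounded by $3^{\phi(l)}$, as the paper records) suffices, and in fact for this coarser exponent even less is required. The function $\phi$ encodes the $3$-adic profile of $\hat w(k)$, not of $a_j$, so your sentence ``encoded by $\phi$'' conflates the two. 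None of this is a gap---your outline goes through once you substitute the correct finite checks and the elementary floor inequality for the reference to Lemma~\ref{piphippibound}.
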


\begin{lemma}\label{lemmaN}
Let $\kappa,\delta\in\mathbb{Z}_{\ge 0}$ and $m\in\mathbb{Z}_{\ge 1}$ be fixed, and fix $i$ to either 0 or 1.  If there exists a discrete array $h_i$ such that
\begin{align}
U^{(i)}\left( \frac{x^{m}}{(1+9x)^{n}} \right) &= \frac{1}{(1+9x)^{3n+\kappa}}\sum_{r\ge \left\lceil (m+\delta)/3 \right\rceil} h_i(m,n,r)\cdot 3^{\pi_i(m,r)}\cdot x^{r}
\end{align} for $1\le n\le 3$, then such a relation can be made to hold for all $n\ge 1$.
\end{lemma}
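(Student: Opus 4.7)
My plan is to prove Lemma \ref{lemmaN} by strong induction on $n$, with the recurrence (\ref{UmodX}) as the main engine. The cases $n=1,2,3$ are handed to us by the hypothesis. For $n\ge 4$, the recurrence
\[
U^{(i)}\!\left(\frac{x^m}{(1+9x)^n}\right) = -\frac{1}{(1+9x)^3}\sum_{j=0}^{2}\sum_{k=1}^{3} a_j(\tau)\,b_k(\tau)\, U^{(i)}\!\left(\frac{x^{m+j-3}}{(1+9x)^{n-k}}\right)
\]
expresses the left-hand side as a $\mathbb{Z}[x]$-linear combination of $U^{(i)}$-images at strictly smaller second argument $n-k\in\{n-1,n-2,n-3\}$, so the induction hypothesis applies to each summand---under the understanding that the companion relations at the shifted first argument $m+j-3$ are supplied in parallel, either by a prior run of the present lemma (executed over $m=1,2,3,\ldots$) or by the surrounding proof of Lemma \ref{lemmaMN}.

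After substitution, the $(j,k)$-summand has denominator $(1+9x)^{3(n-k)+\kappa+3}=(1+9x)^{3n+\kappa-3(k-1)}$; clearing to the uniform denominator $(1+9x)^{3n+\kappa}$ requires only multiplication of the $k$-summand by $(1+9x)^{3(k-1)}$. The claim then reduces to two properties of the polynomial numerator
\[
P(x) = -\sum_{j,k,r'} a_j(x)\,b_k(x)\,(1+9x)^{3(k-1)}\, h_i(m+j-3,n-k,r')\,3^{\pi_i(m+j-3,r')}\, x^{r'},
\]
namely, vanishing below degree $\lceil(m+\delta)/3\rceil$ (immediate from the minimal orders of $a_j$, $b_k$ and the lower bound on $r'$ from the hypothesis) and divisibility of the coefficient of $x^r$ by $3^{\pi_i(m,r)}$ (the substantive content).

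For the 3-adic divisibility, I would expand $a_j(x)\,b_k(x)\,(1+9x)^{3(k-1)}$ in powers of $x$ and track the 3-adic valuations of its coefficients. Each contribution to the coefficient of $x^r$ in $P(x)$ carries a factor $3^{\pi_i(m+j-3,r-l)}$ from the induction hypothesis (with $l=r-r'$) together with the corresponding coefficient from $a_j b_k (1+9x)^{3(k-1)}$. The combined 3-adic estimate is driven by Lemma \ref{piphippibound}: its inequalities $\pi_i(m,r-l)+\phi(l)-\pi_i(m,r)\ge 2$ (for $l>0$) and $=1$ (for $l=0$) are precisely tailored so that, together with the non-increasing monotonicity of $\pi_i$ in its first argument, the summed valuations in $P(x)$ achieve the required bound $\nu_3\ge \pi_i(m,r)$.

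The main obstacle, as I see it, is the careful 3-adic bookkeeping at small $m$: for $m\in\{1,2,3\}$ the recurrence generates $U^{(i)}$-images with non-positive first argument $m+j-3$, and one must either extend the induction hypothesis to a Laurent-polynomial regime in $x$ or show by direct computation that the boundary contributions assemble to satisfy the required 3-adic bound. Once this shift-stability is in hand, the strong induction closes and the stated formula persists for all $n\ge 1$.
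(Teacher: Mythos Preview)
Your proposal has a genuine gap: you invoke the full recurrence (\ref{UmodX}), which shifts the first argument from $m$ to $m+j-3$. But the hypothesis of Lemma \ref{lemmaN} is stated for a \emph{fixed} $m$ only; it gives you nothing about $U^{(i)}\!\left(x^{m+j-3}/(1+9x)^{n-k}\right)$ when $j\neq 3$. Your attempted patch---``companion relations at the shifted first argument $m+j-3$ are supplied in parallel, either by a prior run of the present lemma or by the surrounding proof of Lemma \ref{lemmaMN}''---does not close this, because (a) a prior run of Lemma \ref{lemmaN} at $m'=m+j-3$ would need its own base cases $1\le n\le 3$, which the present hypothesis does not provide, and (b) falling back on Lemma \ref{lemmaMN} only yields the weaker floor-function exponent $\lfloor(3r-m+\mu)/4\rfloor$, not the sharper $\pi_i(m,r)$ you are trying to propagate. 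The very purpose of Lemma \ref{lemmaN}, as the proof of Theorem \ref{thmuyox} makes explicit, is to handle $1\le m\le 3$ where $\pi_i$ exceeds the floor function; that excess is exactly what your route loses. Relatedly, the inequality of Lemma \ref{piphippibound} compares $\pi_i(m,r-l)$ with $\pi_i(m,r)$ at the \emph{same} $m$; your approach would need to compare $\pi_i(m+j-3,r-l)$ with $\pi_i(m,r)$, which is a different estimate you have not established.

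The fix is to use only the $z$-modular equation, not the $x$-modular equation. From (\ref{xnegn2}) one has
\[
\frac{x^m}{(1+9x)^n}=-\frac{1}{(1+9x(3\tau))^3}\sum_{k=1}^{3} b_k(3\tau)\,\frac{x^m}{(1+9x)^{n-k}},
\]
so after applying $U^{(i)}$ and clearing denominators the first argument stays at $m$ throughout. This is exactly the recurrence the paper uses in its proof of Lemma \ref{lemmaN}: one expands $\hat{w}(k):=-b_k(\tau)(1+9x)^{3(k-1)}=\sum_l \hat{v}(k,l)\,3^{\phi(l)}x^l$, shifts the index $r\mapsto r-l$, and then Lemma \ref{piphippibound} (which is phrased precisely for fixed $m$) supplies $\pi_i(m,r-l)+\phi(l)\ge \pi_i(m,r)+1$, closing the induction without ever leaving the fixed-$m$ hypothesis. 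Your obstacle paragraph about $m\in\{1,2,3\}$ and non-positive $m+j-3$ then simply evaporates.
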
  The latter lemma has the advantage of allowing us to more closely study the arithmetic properties of the coefficients $h_i(m,n,r)$.

\begin{proof}[Proof of Lemma \ref{lemmaMN}]

We use induction.  Suppose that our relation holds for all positive integers strictly less than some $m_0,n_0\in\mathbb{Z}_{\ge 6}$.  We show that the relation will hold for $m_0$ and $n_0$.  We have
\begin{align}
U^{(i)}\left( \frac{x^{m_0}}{(1+9x)^{n_0}} \right)&\nonumber\\ = -\frac{1}{(1+9x)^3}&\sum_{j=0}^{2}\sum_{k=1}^{3} a_j(\tau)b_k(\tau)\cdot U^{(i)}\left( \frac{x^{m_0+j-3}}{(1+9x)^{n_0-k}} \right)\\
= -\frac{1}{(1+9x)^3}\sum_{j=0}^{2}&\sum_{k=1}^{3} \frac{a_j(\tau)b_k(\tau)}{(1+9x)^{3(n_0-k)+\kappa}}\nonumber\\ \times\sum_{r\ge \left\lceil (m_0+j-3 + \delta)/3 \right\rceil}& h_i(m_0+j-3,n_0-k,r)\cdot 3^{\left\lfloor \frac{3r-(m_0+j-3)+\mu}{4} \right\rfloor}\cdot x^r\\
=\frac{1}{(1+9x)^{3n_0+\kappa}}&\sum_{j=0}^{2}\sum_{k=1}^{3} w(j,k)\nonumber\\ \times\sum_{r\ge \left\lceil (m_0+j-3+\delta)/3 \right\rceil} &h_i(m_0+j-3,n_0-k,r)\cdot 3^{\left\lfloor \frac{3r-(m_0+j-3)+\mu}{4} \right\rfloor}\cdot x^r,
\end{align} with

\begin{align}
w(j,k) :=& -a_j(\tau)b_k(\tau)(1+9x)^{3(k-1)}\nonumber\\
=& \sum_{l=1}^{12} v(j,k,l)\cdot 3^{\left\lfloor \frac{3l+j}{4} \right\rfloor}\cdot x^l.
\end{align}  This can be demonstrated by an expansion of $a_j(\tau)b_k(\tau)(1+9x)^{3(k-1)}$ which is detailed in the Mathematica supplement.  We have

\begin{align}
&U^{(i)}\left( \frac{x^{m_0}}{(1+9x)^{n_0}} \right)\nonumber\\ =& \frac{1}{(1+9x)^{3n_0+\kappa}}\sum_{j=0}^{2}\sum_{k=1}^{3}\sum_{l=1}^{12} \sum_{r\ge \left\lceil (m_0+j-3+\delta)/3 \right\rceil}\nonumber\\ &v(j,k)\cdot h_i(m_0+j-3,n_0-k,r)\cdot 3^{\left\lfloor \frac{3r-(m_0+j-3)+\mu}{4} \right\rfloor + \left\lfloor \frac{3l+j}{4} \right\rfloor}\cdot x^{r+l}.\label{mnmodeqnrpl}
\end{align}  Notice that for any $M,N\in\mathbb{Z}$,

\begin{align*}
\left\lfloor \frac{M}{4} \right\rfloor + \left\lfloor \frac{N}{4} \right\rfloor \ge \left\lfloor \frac{M+N-3}{4} \right\rfloor.
\end{align*}  Because of this,

\begin{align}
\left\lfloor \frac{3r-(m_0+j-3)+\mu}{4} \right\rfloor + \left\lfloor \frac{3l+j}{4} \right\rfloor \ge \left\lfloor \frac{3(r+l) - m_0 + \mu}{4} \right\rfloor.
\end{align} And because

\begin{align}
r+l &\ge\left\lceil \frac{m_0+j-3+\delta}{3} \right\rceil + l\\ &\ge \left\lceil \frac{m_0+\delta}{3}-\frac{3-j}{3} \right\rceil + l\\
&\ge \left\lceil \frac{m_0+\delta}{3} \right\rceil - 1 + l\\ &\ge \left\lceil \frac{m_0+\delta}{3} \right\rceil,
\end{align} we can relabel our powers of $x$ so that

\begin{align}
&U^{(i)}\left( \frac{x^{m_0}}{(1+9x)^{n_0}} \right)\nonumber\\ =& \frac{1}{(1+9x)^{3n_0+\kappa}}\sum_{\substack{0\le j\le 2,\\ 1\le k\le 3,\\ 1\le l\le 12}} \sum_{r\ge \left\lceil \frac{m_0+\delta}{3} \right\rceil}\nonumber\\ &v(j,k)\cdot h_i(m_0+j-3,n_0-k,r-l)\cdot 3^{\left\lfloor \frac{3r-(m_0+j-3)+\mu}{4} \right\rfloor + \left\lfloor \frac{3l+j}{4} \right\rfloor}\cdot x^{r}.
\end{align}  If we define the discrete array $H_i$ by

\begin{align}
H_i(m,n,r) := \begin{cases}
\sum_{\substack{0\le j\le 2,\\ 1\le k\le 3,\\ 1\le l\le 12}} &\sum_{r\ge \left\lceil \frac{m+\delta}{3} \right\rceil - 1 + l} \hat{H}(i,j,k,l,r),\ r\ge l\\
0,\ r<l
\end{cases}\label{Hi}
\end{align} with

\begin{align*}
\hat{H}(i,j,k,l,r) := v(j,k)\cdot h_i(m+j-3,n-k,r-l)\cdot 3^{\epsilon(i,j,l,m,r)},
\end{align*}

\begin{align*}
\epsilon(i,j,l,m,r) := \left\lfloor \frac{3(r-l)-(m+j-3)+\mu}{4} \right\rfloor + \left\lfloor \frac{3l+j}{4} \right\rfloor -\left\lfloor \frac{3r - m_0 + \mu}{4} \right\rfloor,
\end{align*} then

\begin{align}
U^{(i)}\left( \frac{x^{m_0}}{(1+9x)^{n_0}} \right) &= \frac{1}{(1+9x)^{3n_0+\kappa}}\nonumber\\ &\times\sum_{r\ge \left\lceil \frac{m_0+\delta}{3} \right\rceil} H_i(m_0,n_0,r)\cdot 3^{\left\lfloor \frac{3r-m_0+\mu}{4} \right\rfloor}\cdot x^{r}.
\end{align}

\end{proof}

\begin{proof}[Proof of Lemma \ref{lemmaN}]

We begin in a manner analogous to that of the previous lemma.  However, we only want a recursion for the denominator of each term; we fix $m$.

\begin{align}
U^{(i)}&\left( \frac{x^{m}}{(1+9x)^{n}} \right)\nonumber\\ =& -\frac{1}{(1+9x)^3}\sum_{k=1}^{3}b_k(\tau)\cdot U^{(i)}\left( \frac{x^{m}}{(1+9x)^{n-k}} \right)\\
=& -\frac{1}{(1+9x)^3}\sum_{k=1}^{3} \frac{b_k(\tau)}{(1+9x)^{3(n-k)-\kappa}} \sum_{r\ge 1} h_i(m,n-k,r)\cdot 3^{\pi_i(m,r)}\cdot x^r\\
=&\frac{1}{(1+9x)^{3n+\kappa}}\sum_{k=1}^{3} \hat{w}(k)\sum_{r\ge 1 } h_i(m,n-k,r)\cdot 3^{\pi_i(m,r)}\cdot x^r,
\end{align} with

\begin{align}
\hat{w}(k) :&= -b_k(\tau)(1+9x)^{3(k-1)}\nonumber\\
&=\begin{cases}
& \displaystyle{\sum_{l=0}^{6} \hat{v}(k,l)\cdot 3^{\phi(l)}\cdot x^l},\ k<3\\
&\displaystyle{1+\sum_{l=1}^{6} \hat{v}(3,l)\cdot 3^{\phi(l)}\cdot x^l},\ k=3.
\end{cases}
\end{align}  Similar to $w(i,j)$ above, this is a straightforward expansion of $\hat{w}(k)$ that we detail in our Mathematica supplement.  Expanding, we have

\begin{align}
U^{(i)}&\left( \frac{x^{m}}{(1+9x)^{n}} \right) = \frac{1}{(1+9x)^{3n+\kappa}}\nonumber\\
\times&\Bigg(\sum_{\substack{1\le k\le 2,\\ 0\le l\le 6,\\ r\ge \left\lceil \frac{m+\delta}{3} \right\rceil}} \hat{v}(k,l)\cdot h_i(m,n-k,r)\cdot 3^{\pi_i(m,r) + \phi(l)}\cdot x^{r+l}\label{reliancenm5a3}\\
&+\sum_{\substack{1\le l\le 6,\\ r\ge \left\lceil \frac{m+\delta}{3} \right\rceil}} \hat{v}(3,l)\cdot h_i(m,n-3,r)\cdot 3^{\pi_i(m,r) + \phi(l)}\cdot x^{r+l}\label{reliancenm5a2}\\
&+\sum_{r\ge \left\lceil \frac{m+\delta}{3} \right\rceil} h_i(m,n-3,r)\cdot 3^{\pi_i(m,r)}\cdot x^{r}\Bigg).\label{reliancenm5a}
\end{align}  With a change of index, we have

\begin{align}
U^{(i)}\left( \frac{x^{m}}{(1+9x)^{n}} \right) = \frac{1}{(1+9x)^{3n+\kappa}}
\times&\Bigg(\sum_{\substack{1\le k\le 2,\\ 0\le l\le 6,\\ r\ge l+ \left\lceil \frac{m+\delta}{3} \right\rceil}} \hat{v}(k,l)\cdot h_i(m,n-k,r-l)\cdot 3^{\pi_i(m,r-l) + \phi(l)}\cdot x^{r}\label{reliancenm5aaaa}\\
&+\sum_{\substack{1\le l\le 6,\\ r\ge l+ \left\lceil \frac{m+\delta}{3} \right\rceil}}\hat{v}(3,l)\cdot h_i(m,n-3,r-l)\cdot 3^{\pi_i(m,r-l) + \phi(l)}\cdot x^{r}\label{reliancenm5aaaaa}\\
&+\sum_{r\ge \left\lceil \frac{m+\delta}{3} \right\rceil} h_i(m,n-3,r)\cdot 3^{\pi_i(m,r)}\cdot x^{r}\Bigg).\label{reliancenm5aaaaaa}
\end{align}  To finish the proof, we note from Lemma \ref{piphippibound} that $\pi_i(m,r-l) + \phi(l) - \pi_i(m,r)\ge 1$.  This ensures that the critical 3-adic valuation of the terms of $U^{(i)}\left( \frac{x^{m}}{(1+9x)^{n}} \right)$ derives precisely from the sum

\begin{align}
\frac{1}{(1+9x)^{3n+\kappa}}&\sum_{r\ge \left\lceil \frac{m+\delta}{3} \right\rceil} h_i(m,n-3,r)\cdot 3^{\pi_i(m,r)}\cdot x^{r}\nonumber\\ &= U^{(i)}\left( \frac{x^{m}}{(1+9x)^{n-3}} \right).\label{reliancenm5b}
\end{align}  We rearrange the sum and define a discrete array in the same manner that we defined (\ref{Hi}).

\end{proof}

\begin{proof}[Proof of Theorem \ref{thmuyox}]

Lemma \ref{lemmaMN} is sufficient to prove Theorem \ref{thmuyox} for $m\ge 4$.  However, for $1\le m\le 3$, we have larger values of $\pi_i(m,r)$ than the simple floor function.

To account for these larger values, we also use Lemma \ref{lemmaN} to prove Theorem \ref{thmuyox} for $1\le m\le 3$.

We need to establish that the theorem applies in the explicit cases $1\le m\le 3,$ $1\le n\le 3$.  We show how to verify these cases in Section \ref{initialrelationssection}.

\end{proof}

As a consequence of Lemmas \ref{piphippibound}, \ref{lemmaN}, we can derive results on the arithmetic properties of the functions $h_i$:

\begin{corollary}\label{congcor1}
For all $n\in\mathbb{Z}_{\ge 1}$ we have:
\begin{align}
h_i(m,n,r)&\equiv h_i(m,n-3,r)\pmod{3}.\label{hmod2}
\end{align}  In particular, for all $n\in\mathbb{Z}_{\ge 1}$ and $1\le m\le 3$ we have:
\begin{align}
h_1(m,1,2)&\equiv 0\pmod{3},\label{hmod2b}\\
h_0(m,3,4)&\equiv 0\pmod{3},\label{hmod4}.
\end{align} Finally,
\begin{align}
h_1(m,1,4)&\equiv 0\pmod{3},\label{hmod3}
\end{align} for $1\le m\le 6$.
\end{corollary}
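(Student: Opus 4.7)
The plan is to obtain the congruence (\ref{hmod2}) directly from the proof of Lemma \ref{lemmaN}, by isolating the one summand in that decomposition which does not explicitly carry an extra factor of $3$. Looking at equations (\ref{reliancenm5aaaa})--(\ref{reliancenm5aaaaaa}), the constant term $1$ sitting inside $\hat{w}(3)$ contributes the summand (\ref{reliancenm5aaaaaa}), which is exactly $U^{(i)}\bigl(x^m/(1+9x)^{n-3}\bigr)$ and already has the prescribed $3$-adic valuation $\pi_i(m,r)$ at each power $x^r$. All other summands in the identity contribute a factor $3^{\pi_i(m,r-l)+\phi(l)}$, and Lemma \ref{piphippibound} guarantees that this exceeds $3^{\pi_i(m,r)}$ by at least one power of $3$: by exactly $+1$ when $l=0$ (a case appearing only for $k\in\{1,2\}$) and by $\geq +2$ when $l>0$. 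Extracting the coefficient of $x^r$ on both sides of (\ref{reliancenm5aaaa})--(\ref{reliancenm5aaaaaa}) and cancelling the common factor $3^{\pi_i(m,r)}$ therefore yields $h_i(m,n,r)\equiv h_i(m,n-3,r)\pmod{3}$ at once.

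For the specific divisibilities (\ref{hmod2b})--(\ref{hmod3}), the plan is to reduce each to the base-case computations of $U^{(i)}(x^m/(1+9x)^n)$ established in Section \ref{initialrelationssection}. The values $h_1(m,1,2)$ (for $1\le m\le 3$) and $h_1(m,1,4)$ (for $1\le m\le 6$) are read off directly from the explicit expansions of $U^{(1)}(x^m/(1+9x))$ as localized polynomials in $x$. The values $h_0(m,3,4)$ for $1\le m\le 3$ can be read off from the explicit expansion of $U^{(0)}(x^m/(1+9x)^3)$, or --- more economically --- by applying (\ref{hmod2}) once to descend to $h_0(m,0,4)$, then reading off the corresponding coefficients of $U^{(0)}(x^m)$. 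In either route, the verification is a finite, mechanical check using the six fundamental relations listed in Appendix II.

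The main obstacle is the careful bookkeeping of which terms in the proof of Lemma \ref{lemmaN} carry exactly $3^{\pi_i(m,r)}$ as a factor and which carry strictly more. The key design choice of the function $\phi$, with $\phi(0)=1$, is precisely what ensures that every $l=0$ summand arising from $\hat{w}(k)$ with $k\in\{1,2\}$ already contributes one extra factor of $3$ beyond $3^{\pi_i(m,r)}$, so that after reduction $\pmod{3}$ only the constant-$1$ piece of $\hat{w}(3)$ survives on the right-hand side. Once this accounting is firmly in place, (\ref{hmod2}) follows with essentially no additional work beyond the base-case divisibilities, which themselves are handled by the explicit expansions referred to above.
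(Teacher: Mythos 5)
Your proposal is correct and follows essentially the same route as the paper: the paper likewise reads (\ref{hmod2}) off the decomposition (\ref{reliancenm5aaaa})--(\ref{reliancenm5aaaaaa}) via Lemma \ref{piphippibound}, noting that after dividing out $3^{\pi_i(m,r)}$ only the $h_i(m,n-3,r)$ term survives modulo $3$, and verifies (\ref{hmod2b}), (\ref{hmod4}), (\ref{hmod3}) by direct computation from the initial relations. Your accounting of the $l=0$ versus $l>0$ terms is just a more explicit version of the paper's one-line justification.
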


\begin{proof}

We reexamine (\ref{reliancenm5aaaa}), (\ref{reliancenm5aaaaa}), (\ref{reliancenm5aaaaaa}).  Notice that due to Lemma \ref{piphippibound}, if we divide out $3^{\pi_i(m,r)}$, then $h_i(m,n-3,r)$ is the only term which does not vanish modulo 3.

The congruences (\ref{hmod2b}), (\ref{hmod4}), (\ref{hmod3}) may be checked by computation.

\end{proof}

\begin{corollary}\label{congcor2}
For all $m,r\in\mathbb{Z}$ such that $1\le m\le 3r$, $1\le r\le 5$, and $n\in\mathbb{Z}_{\ge 1}$ we have:
\begin{align}
h_1(m,3n+1,r)&\equiv h_1(m,1,r)\pmod{9}.\label{hmod2a}
\end{align}  Moreover, for all $r,w\in\mathbb{Z}$ such that $1\le r\le 3w-1$, $1\le w\le 3$,
\begin{align}
h_0(m,3n,r)&\equiv h_0(m,3,r)\pmod{9}.\label{hmod2aa}
\end{align}  In particular,
\begin{align}
h_1(m,3n+1,1)\equiv 0\pmod{9}.\label{hmod22222}
\end{align}
\end{corollary}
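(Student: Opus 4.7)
The plan is to sharpen the proof of Corollary \ref{congcor1} by extracting a congruence modulo $9$ instead of modulo $3$. The mechanism is already present in the recursion (\ref{reliancenm5aaaa})--(\ref{reliancenm5aaaaaa}) derived in the proof of Lemma \ref{lemmaN}; one simply uses the full strength of Lemma \ref{piphippibound}, which gives $\pi_i(m,r-l)+\phi(l)-\pi_i(m,r)\ge 2$ whenever $l>0$, not merely $\ge 1$.

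Comparing coefficients of $x^r$ in that recursion and dividing by $3^{\pi_i(m,r)}$ writes $h_i(m,n,r)-h_i(m,n-3,r)$ as a double sum over $k,l$. Every term with $l>0$ carries a factor of $3^{\ge 2}$ and vanishes modulo $9$, while the $l=0$ contributions (which force $k\in\{1,2\}$) each carry a single factor of $3^{\phi(0)}=3$. The result is
\begin{align*}
h_i(m,n,r)\equiv h_i(m,n-3,r)+3\,\hat v(1,0)\,h_i(m,n-1,r)+3\,\hat v(2,0)\,h_i(m,n-2,r)\pmod 9.
\end{align*}
Evaluating $\hat w(k)$ at $x=0$ (equivalently $z=1$) and using $b_1(1)=3$, $b_2(1)=-3$, the coefficients $\hat v(1,0)$ and $\hat v(2,0)$ are $\pm 1$ with opposite signs, so the correction term collapses to $\pm 3\bigl[h_i(m,n-1,r)-h_i(m,n-2,r)\bigr]$.

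With this refined recursion in hand, (\ref{hmod2a}) follows by induction on $n$: the inductive step for $h_1(m,3n+4,r)\equiv h_1(m,1,r)\pmod 9$ requires only that the bracketed correction term vanish modulo $3$, which by Corollary \ref{congcor1} reduces to the mod-$3$ identity $h_1(m,3,r)\equiv h_1(m,2,r)\pmod 3$ on the finite range $1\le m\le 3r$, $1\le r\le 5$. The statement (\ref{hmod2aa}) is proved identically, reducing to $h_0(m,1,r)\equiv h_0(m,2,r)\pmod 3$ on its finite range, and (\ref{hmod22222}) follows by combining (\ref{hmod2a}) with a direct verification that $h_1(m,1,1)\equiv 0\pmod 9$ for each admissible $m$. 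The main obstacle is exactly this base-case step: the needed mod-$3$ congruences among the small $h_i(m,k,r)$ are not forced by any structural feature established so far, and so must be read off from the explicit initial relations of Section \ref{initialrelationssection} and verified by direct computation (presumably in the Mathematica supplement). Once these finite checks are in place, the rest of the argument is routine bookkeeping driven entirely by Lemma \ref{piphippibound}.
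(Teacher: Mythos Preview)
Your proposal is correct and follows essentially the same route as the paper's own proof: both extract the mod-$9$ recursion
\[
h_i(m,n,r)\equiv h_i(m,n-3,r)\pm 3\bigl[h_i(m,n-1,r)-h_i(m,n-2,r)\bigr]\pmod 9
\]
from (\ref{reliancenm5aaaa})--(\ref{reliancenm5aaaaaa}) using the sharp bound in Lemma~\ref{piphippibound} together with the explicit values $\hat v(1,0),\hat v(2,0)\in\{\pm 1\}$, then reduce the bracketed term to a finite mod-$3$ check on $h_i(m,1,r),h_i(m,2,r),h_i(m,3,r)$ via Corollary~\ref{congcor1}, and finish (\ref{hmod22222}) by direct computation of $h_1(m,1,1)$. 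The paper phrases the reduction as ``all three initial values are congruent mod~$3$'' and then iterates (\ref{hiequmod9}) directly, whereas you phrase it as induction on $n$, but the content is identical.
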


\begin{proof}

Notice that $\pi_i(m,r-l) + \phi(l) - \pi_i(m,r)\ge 2$ except when $l=0$.  On the other hand, $\hat{v}(1,0)= 1$ and $\hat{v}(2,0)=-1$.  We can therefore reduce (\ref{reliancenm5aaaa}), (\ref{reliancenm5aaaaa}), (\ref{reliancenm5aaaaaa}) modulo 9 and examine the coefficient of $x^r$:

\begin{align}
h_i(m,n,r)\equiv& 3\left(h_i(m,n-1,r)-h_i(m,n-2,r)\right) + h_i(m,n-3,r)\pmod{9}.\label{hiequmod3}
\end{align}  For $1\le m\le 3r$, $1\le r\le 5$, we can compute that

\begin{align*}
h_1(m,1,r)\equiv h_1(m,2,r)\equiv h_1(m,3,r)\pmod{3}.
\end{align*}  Similarly, for $1\le r\le 3w-1$, $1\le w\le 3$, we compute that

\begin{align*}
h_0(m,1,r)\equiv h_0(m,2,r)\equiv h_0(m,3,r)\pmod{3}.
\end{align*}  This ensures that $h_i(m,n-1,r)-h_i(m,n-2,r)\equiv 0\pmod{3}$.  Combining this with (\ref{hiequmod3}), we have

\begin{align}
h_i(m,n,r)\equiv& h_i(m,n-3,r)\pmod{9}.\label{hiequmod9}
\end{align}  This of course permits us to reduce $h_1(m,3n+1,r)$ to $h_1(m,1,r)$, and $h_0(m,3n,r)$ to $h_0(m,3,r)$, modulo~9.

The congruence (\ref{hmod22222}) can then be verified by checking the cases $h_1(m,1,1)$, $1\le m\le 3$ computationally.

\end{proof}

\section{Main Theorem}\label{maintheoremsection}

We can now work towards the proof of Theorem \ref{Mythm}.  We start with the simpler question of how to elements of $\mathcal{V}_n^{(0)}$ respond under $U^{(0)}$:

\begin{theorem}\label{v02v1}
Let $f\in\mathcal{V}_n^{(0)}$.  Then
\begin{align}
U^{(0)}\left( f \right)\in\mathcal{V}_{3n+1}^{(0)}.
\end{align}
\end{theorem}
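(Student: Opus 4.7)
The plan is to apply $U^{(0)}$ termwise via Theorem \ref{thmuyox} and reduce the claim to a finite set of inequality and congruence checks. Writing $f = (1+9x)^{-n}\sum_{m \ge 1} s(m)\,3^{\theta(m)} x^m$ for some discrete integer-valued $s$, linearity of $U^{(0)}$ combined with Theorem \ref{thmuyox} yields
\begin{align*}
U^{(0)}(f) = \frac{1}{(1+9x)^{3n+1}}\sum_{r \ge 1} C_n(r)\, x^r, \qquad C_n(r) := \sum_{m=1}^{3r-1} s(m)\, h_0(m,n,r)\, 3^{\theta(m) + \pi_0(m,r)}.
\end{align*}
Since $U^{(0)}(f) \in \mathcal{V}^{(0)}_{3n+1}$ is equivalent to $3^{\theta(r)} \mid C_n(r)$ for every $r \ge 1$, and since $s$ is arbitrary, it suffices to establish the termwise bound $\theta(m) + \pi_0(m,r) + v_3(h_0(m,n,r)) \ge \theta(r)$ for all $1 \le m \le 3r - 1$ and $n \ge 1$, where $v_3$ denotes the $3$-adic valuation.

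The first step is to verify the purely combinatorial inequality $\theta(m) + \pi_0(m,r) \ge \theta(r)$. For $m \ge 4$ this follows from the floor estimate $\lfloor A/4 \rfloor + \lfloor B/4 \rfloor \ge \lfloor (A+B)/4 \rfloor - 1$ applied to $A = 3m - 3$ and $B = 3r - m$, after accounting for the piecewise jumps of $\theta$ at $m = 4, 7$ and of $\theta, \pi_0$ at $r = 4, 7$. For $m \in \{1, 2, 3\}$, the inequality is verified case-by-case on $r$ using the explicit form of $\pi_0(m,r)$; the only values of $r$ requiring individual attention are the boundary cases $r = 4, 5, 6$. The upshot of this analysis is that $\theta(m) + \pi_0(m, r) - \theta(r) \ge 0$ \emph{except} for the three exceptional triples $(m, r) \in \{(1, 4), (2, 4), (3, 4)\}$, for each of which the deficiency is exactly one.

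The second step handles these exceptional pairs by proving $h_0(m, n, 4) \equiv 0 \pmod{3}$ for all $m \in \{1, 2, 3\}$ and all $n \ge 1$. The periodicity $h_0(m, n, 4) \equiv h_0(m, n - 3, 4) \pmod{3}$ from Corollary \ref{congcor1}(\ref{hmod2}) reduces this to the three residue classes $n \in \{1, 2, 3\}$. The case $n = 3$ is Corollary \ref{congcor1}(\ref{hmod4}), and the two base cases $n = 1, 2$ are finite congruences extracted from the initial relations of Section \ref{initialrelationssection} and verifiable in the Mathematica supplement.

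I expect the main obstacle to be the first step: cleanly pinpointing the three exceptional triples, and confirming that the deficiency is only one there, so that a single factor of $3$ coming from $h_0$ suffices to close the inequality. Once the case analysis is organized around the piecewise thresholds of $\theta$ and $\pi_0$, the second step is a routine finite verification, and the conclusion $U^{(0)}(f) \in \mathcal{V}^{(0)}_{3n+1}$ follows.
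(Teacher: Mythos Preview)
Your proposal is correct and matches the paper's proof essentially step for step: the paper also expands $U^{(0)}(f)$ via Theorem~\ref{thmuyox}, reduces to the inequality $\theta(m)+\pi_0(m,r)\ge\theta(r)$, isolates the same three exceptional pairs $(m,r)=(1,4),(2,4),(3,4)$ with deficit exactly~$1$ (via the table in Appendix~I), and closes the gap using $h_0(m,n,4)\equiv 0\pmod 3$ from Corollary~\ref{congcor1}. If anything, you are slightly more explicit than the paper in noting that the periodicity~(\ref{hmod2}) reduces the congruence to the three base cases $n\in\{1,2,3\}$ rather than just $n=3$; the paper's Appendix~I mentions only $h_0(m,3,4)$, leaving the $n=1,2$ checks to the supplement.
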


\begin{proof}

Let $f\in\mathcal{V}_n^{(0)}$.  Then we can express $f$ as
\begin{align*}
f = \frac{1}{(1+9x)^n}\sum_{m\ge 1} s(m)\cdot 3^{\theta(m)}\cdot x^m.
\end{align*}  We write

\begin{align}
U^{(0)}\left( f \right) &= \sum_{m\ge 1} s(m)\cdot 3^{\theta(m)}\cdot U^{(0)}\left( \frac{x^m}{(1+9x)^n}\right)\\
&=\frac{1}{(1+9x)^{3n+1}}\sum_{m\ge 1}\sum_{r\ge \left\lceil (m+1)/3 \right\rceil} s(m)\cdot h_0(m,n,r) 3^{\theta(m) + \pi_0(m,r)}\cdot x^r\\
&=\frac{1}{(1+9x)^{3n+1}}\sum_{r\ge 1}\sum_{m\ge 1} s(m)\cdot h_0(m,n,r) 3^{\theta(m) + \pi_0(m,r)}\cdot x^r.
\end{align}  We want to show that

\begin{align}
\theta(m) + \pi_0(m,r) - \theta(r)\ge 0.\label{v02v1nonneg}
\end{align}  For $1\le m\le 6$ and $1\le r\le 6$, it is a simple calculation.  Some exceptions to (\ref{v02v1nonneg}) occur, but they can be handled with relative ease.  See Appendix I.

We consider the remaining cases below:

\subsection{$m=1,2,3$, $r\ge 7$}

\begin{align*}
\theta(m) + \pi_0(m,r) - \theta(r) &= \left\lfloor \frac{3r-m}{4} \right\rfloor - 1 - \left( \left\lfloor \frac{3r-3}{4} \right\rfloor - 1 \right)\\
&= \left\lfloor \frac{3r-m}{4} \right\rfloor - \left\lfloor \frac{3r-3}{4} \right\rfloor\\ &\ge 0.
\end{align*}

\subsection{$m=4,5,6$, $r\ge 7$}

\begin{align*}
\theta(m) + \pi_0(m,r) - \theta(r) &= \left\lfloor \frac{3r-m}{4} \right\rfloor - 1 + 2 - \left( \left\lfloor \frac{3r-3}{4} \right\rfloor - 1 \right)\\
&\ge \left\lfloor \frac{3r-6}{4} \right\rfloor - \left\lfloor \frac{3r-3}{4} \right\rfloor + 2\\
&= \left\lfloor \frac{3r-2}{4} \right\rfloor - \left\lfloor \frac{3r-3}{4} \right\rfloor + 1\\
&\ge 0.
\end{align*}

\subsection{$m\ge 7$, $r\ge 7$}

For $r\ge 7$, we have:

\begin{align*}
\theta(m) + \pi_0(m,r) - \theta(r) &= \left\lfloor \frac{3r-m-4}{4} \right\rfloor + \left\lfloor \frac{3m-3}{4} \right\rfloor - 1 - \left( \left\lfloor \frac{3r-3}{4} \right\rfloor - 1 \right)\\
&\ge \left\lfloor \frac{3r+2m-10}{4} \right\rfloor - \left\lfloor \frac{3r-3}{4} \right\rfloor\\
&= \left\lfloor \frac{3r-3}{4} \right\rfloor + \left\lfloor \frac{2m-7}{4} \right\rfloor - \left\lfloor \frac{3r-3}{4} \right\rfloor\\
&\ge \left\lfloor \frac{7}{4} \right\rfloor \ge 0.
\end{align*}

\subsection{$m\ge 7$, $r\le 6$}

We note that $r\ge \left\lfloor (m+1)/3 \right\rfloor \ge 3$.  We have

\begin{align*}
\theta(m) + \pi_0(m,r) - \theta(r) &= \left\lfloor \frac{3r-m-4}{4} \right\rfloor + \left\lfloor \frac{3m-3}{4} \right\rfloor - 1 - \theta(r)\\
&\ge \left\lfloor \frac{3r+2m-10}{4} \right\rfloor - 1 - \theta(r)\\
&\ge \left\lfloor \frac{-1+2m}{4} \right\rfloor - 3\\
&\ge \left\lfloor \frac{-1+14}{4} \right\rfloor - 3\\
&\ge 0.
\end{align*}

\end{proof}

We now study the analogous question of how elements of $\mathcal{V}_n^{(1)}$ respond under $U^{(1)}$:

\begin{theorem}\label{gofrom1to0}
Let $f\in\mathcal{V}_n^{(1)}$ with $n\equiv 1\pmod{3}$.  Then

\begin{align}
\frac{1}{9}U^{(1)}\left( f \right)&\in\mathcal{V}_{3n}^{(0)}.
\end{align}
\end{theorem}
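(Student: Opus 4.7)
The plan is to expand $U^{(1)}(f)$ term by term using Theorem~\ref{thmuyox} and then verify, for each $r \ge 1$, that the resulting coefficient of $x^r$ carries enough $3$-adic divisibility for $U^{(1)}(f)/9$ to sit in $\mathcal{V}^{(0)}_{3n}$. Writing $f = \frac{1}{(1+9x)^n}\sum_{m\ge 1} s(m)\,3^{\theta(m)}\,x^m$ and applying linearity of $U^{(1)}$ together with Theorem~\ref{thmuyox},
$$U^{(1)}(f) = \frac{1}{(1+9x)^{3n}}\sum_{r\ge 1} C(r)\,x^r, \qquad C(r) := \sum_{1\le m\le 3r} s(m)\,h_1(m,n,r)\,3^{\theta(m)+\pi_1(m,r)},$$
and the claim reduces to showing $v_3(C(r)) \ge \theta(r) + 2$ for every $r$.

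Most pairs $(m,r)$ satisfy the identity-level estimate $\theta(m) + \pi_1(m,r) \ge \theta(r) + 2$, which already supplies enough $3$-divisibility in every individual summand. First I would establish this bound by a direct floor-function calculation split along the natural ranges $m \in \{1,2,3\},\, \{4,5,6\},\, \{m\ge 7\}$ against $r \in \{1,\dots,6\},\, \{r\ge 7\}$, essentially mimicking the bookkeeping carried out for Theorem~\ref{v02v1}. A finite check then pinpoints the exceptional configurations where this estimate degrades: $m \in \{1,2,3\}$ with $r \in \{1,2,4\}$ (deficits $2,1,1$ respectively), together with the single pair $(m,r) = (6,4)$ (deficit $1$). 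In each exceptional case the hypothesis $n \equiv 1 \pmod 3$ activates a precise congruence on $h_1$ that supplies exactly the missing power of~$3$: Corollary~\ref{congcor2} gives $h_1(m,n,1) \equiv 0 \pmod 9$ for $m \in \{1,2,3\}$, clearing the deficit of~$2$ at $r=1$; Corollary~\ref{congcor1} via \eqref{hmod2b} gives $h_1(m,n,2) \equiv 0 \pmod 3$ for $m \in \{1,2,3\}$, clearing $r=2$; and Corollary~\ref{congcor1} via \eqref{hmod3} gives $h_1(m,n,4) \equiv 0 \pmod 3$ for all $1 \le m \le 6$, simultaneously clearing both sub-deficits at $r=4$.

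The main obstacle is the enumeration: one must be confident that the exceptional list is complete and that each exceptional pair is matched to a congruence whose range of applicability actually covers it. The hypothesis $n \equiv 1 \pmod 3$ is indispensable here, because the congruences in Corollaries~\ref{congcor1}--\ref{congcor2} propagate only with period~$3$ in the middle index of $h_1$; dropping this residue condition would immediately break the $r=1$ case, where we need divisibility by~$9$ rather than merely~$3$. Once the deficit-congruence matching is verified, the $3$-adic inequality $v_3(C(r)) \ge \theta(r)+2$ holds uniformly in $r$, and we can extract $1/9$ from $U^{(1)}(f)$ to land in $\mathcal{V}^{(0)}_{3n}$ as required.
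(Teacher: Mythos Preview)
Your overall structure---expanding $U^{(1)}(f)$ via Theorem~\ref{thmuyox}, verifying the inequality $\theta(m)+\pi_1(m,r)\ge\theta(r)+2$ in the generic ranges, and patching the finitely many exceptional pairs with congruences on $h_1$---matches the paper exactly, and your exceptional list $(m,r)\in\{1,2,3\}\times\{1,2,4\}\cup\{(6,4)\}$ agrees with Table~\ref{tablew2} in Appendix~I. The $r=2$ and $r=4$ repairs via \eqref{hmod2b} and \eqref{hmod3} are also what the paper does.

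The gap is at $r=1$. You invoke \eqref{hmod22222} to get $h_1(m,n,1)\equiv 0\pmod 9$ and thereby clear the deficit of $2$ termwise. But notice that your argument then never uses the defining feature of $\mathcal{V}_n^{(1)}$, namely $\sum_{m=1}^3 s(m)\equiv 0\pmod 9$; if your reasoning were valid it would prove the theorem for every $f\in\mathcal{V}_n^{(0)}$ with $n\equiv 1\pmod 3$, rendering the class $\mathcal{V}^{(1)}$ superfluous. In fact the paper's own proof reads \eqref{hmod22222} as asserting $h_1(m,n,1)\equiv 1\pmod 9$ (not $0$), and the explicit computations in the proof of Theorem~\ref{oddbacktoodd}---where coefficients like $-\tfrac{2s(1)}{3}$ appear and integrality is rescued only by $s(1)+s(2)+s(3)\equiv 0\pmod 9$---are incompatible with $h_1(m,1,1)\equiv 0\pmod 9$. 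So the statement of \eqref{hmod22222} as printed is a typo, and you cannot rely on it. The correct mechanism at $r=1$ is the one the paper uses: the three values $h_1(1,n,1),h_1(2,n,1),h_1(3,n,1)$ are all congruent to a common nonzero residue (namely $1$) modulo $9$, so that
\[
\sum_{m=1}^{3} s(m)\,h_1(m,n,1)\;\equiv\;\sum_{m=1}^{3} s(m)\;\equiv\;0\pmod 9,
\]
and it is precisely the hypothesis $f\in\mathcal{V}_n^{(1)}$ that supplies the last congruence. Replace your termwise argument at $r=1$ with this summed one and the proof goes through.
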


\begin{proof}
Let $f\in\mathcal{V}_n^{(1)}$ such that
\begin{align*}
f &= \frac{1}{(1+9x)^n}\sum_{m\ge 1} s(m)\cdot 3^{\theta(m)}\cdot x^m.
\end{align*}  We then have

\begin{align*}
U^{(1)}\left( f \right) &= \frac{1}{(1+9x)^{3n}}\sum_{m\ge 1}\sum_{r\ge \left\lceil m/3 \right\rceil} s(m)\cdot h_1(m,n,r)\cdot 3^{\theta(m) + \pi_1(m,r)}\cdot x^r\\
&= \frac{1}{(1+9x)^{3n}}\sum_{r\ge 1}\sum_{m\ge 1} s(m)\cdot h_1(m,n,r)\cdot 3^{\theta(m) + \pi_1(m,r)}\cdot x^r
\end{align*}  We want to show that

\begin{align}
\theta(m) + \pi_1(m,r) - \theta(r) - 2 \ge 0.\label{v12v0nonneg}
\end{align}  For $1\le m\le 6$ and $2\le r\le 6$, it is a simple calculation.  In fact, exceptions to (\ref{v12v0nonneg}) do occur, but we can successfully compensate for them.  See Appendix I.

We handle the remaining cases as follows:

\subsection{$m=1,2,3$, $r\ge 7$}

\begin{align*}
\theta(m) + \pi_1(m,r) - \theta(r) - 2 &= \left\lfloor \frac{3r+1}{4} \right\rfloor - \left( \left\lfloor \frac{3r-3}{4} \right\rfloor - 1 \right) - 2\\
&= \left\lfloor \frac{3r+1}{4} \right\rfloor - \left\lfloor \frac{3r-3}{4} \right\rfloor - 1 \\
&= \left\lfloor \frac{3r-3}{4} \right\rfloor + 1 - \left\lfloor \frac{3r-3}{4} \right\rfloor - 1\\
&\ge 0.
\end{align*}

\subsection{$m=4,5,6$, $r\ge 7$}

\begin{align*}
\theta(m) + \pi_1(m,r) - \theta(r) - 2 &\ge \left\lfloor \frac{3r-1}{4} \right\rfloor - 1 + 2 - \left( \left\lfloor \frac{3r-3}{4} \right\rfloor - 1 \right) - 2\\
&= \left\lfloor \frac{3r-1}{4} \right\rfloor - \left\lfloor \frac{3r-3}{4} \right\rfloor \\ &\ge 0.
\end{align*}

\subsection{$m\ge 7$, $3\le r\le 6$}

For $r=3$,

\begin{align*}
\theta(m) + \pi_1(m,r) - \theta(r) - 2 &= \left\lfloor \frac{3r-m+1}{4} \right\rfloor + \left\lfloor \frac{3m-3}{4} \right\rfloor - 1 - 2\\
&\ge \left\lfloor \frac{3r+2m-5}{4} \right\rfloor - \left\lfloor \frac{3r-3}{4} \right\rfloor - 3\\
&\ge \left\lfloor \frac{4+14}{4} \right\rfloor - \left\lfloor \frac{6}{4} \right\rfloor - 3\\
&= 0.
\end{align*}

For $4\le r\le 6$, we have:

\begin{align*}
\theta(m) + \pi_1(m,r) - \theta(r) - 2 &= \left\lfloor \frac{3r-m+1}{4} \right\rfloor + \left\lfloor \frac{3m-3}{4} \right\rfloor - 1 - 2 - 2\\
&\ge \left\lfloor \frac{3r+2m-5}{4} \right\rfloor - 5\\
&\ge \left\lfloor \frac{7+14}{4} \right\rfloor - 5\\
&\ge 0.
\end{align*}

\subsection{$m\ge 7$, $r\ge 7$}

For $r\ge 7$, we have:

\begin{align*}
\theta(m) + \pi_1(m,r) - \theta(r) - 2 &= \left\lfloor \frac{3r-m+1}{4} \right\rfloor + \left\lfloor \frac{3m-3}{4} \right\rfloor - 1 - \left( \left\lfloor \frac{3r-3}{4} \right\rfloor - 1 \right) - 2\\
&\ge \left\lfloor \frac{3r+2m-5}{4} \right\rfloor - \left\lfloor \frac{3r-3}{4} \right\rfloor - 2\\
&\ge \left\lfloor \frac{3r-3}{4} \right\rfloor + \left\lfloor \frac{2m-2}{4} \right\rfloor - \left\lfloor \frac{3r-3}{4} \right\rfloor - 2\\
&= \left\lfloor \frac{2m-2}{4} \right\rfloor - 2\\
&\ge 3-2 \ge 0.
\end{align*}

This finally leaves:

\subsection{$1\le m\le 3$, $r=1$}

\begin{align*}
\theta(m) + \pi_1(m,r) - \theta(r) - 2 &= \left\lfloor \frac{3-m+1}{4} \right\rfloor + 0 - 0 - 2 = -2 <0.
\end{align*}  How do we deal with this dilemma?

If we examine the coefficient for $r=1$, i.e., the coefficient for $x^1$, then we get

\begin{align}
\sum_{m=1}^3 s(m)\cdot h_1(m,n,1)\cdot 3^{\theta(m) + \pi_1(m,1)}.
\end{align}  We have just shown that $\theta(m) + \pi_1(m,1)=0$.  However, remembering from (\ref{hmod22222}) that $h_1(m,n,1)\equiv 1\pmod{9}$, we have

\begin{align}
\sum_{m=1}^3 s(m)\cdot h_1(m,n,1)\cdot 3^{\theta(m) + \pi_1(m,1)}\equiv
\sum_{m=1}^3 s(m)\equiv 0\pmod{9},
\end{align} since $f\in\mathcal{V}_n^{(1)}$.

\end{proof}

This shows that our additional condition on $\mathcal{V}_n^{(1)}$ successfully compensates for the apparent failure of the very first term to gain the necessary powers of 3.  However, it is clearly not enough to show that a specific function in $\mathcal{V}_n^{(1)}$ will gain divisibilty by 9 on application of $U^{(1)}$.  It is critical to determine that this condition is \textit{stable}---that is, on applying $U^{(0)}\circ U^{(1)}$ and then dividing out the expected power of 9, what remains ought to be an element of $\mathcal{V}_{9n+1}^{(1)}$.  In particular, the first three coefficients must sum to divisibility by 9.

If we can prove this, then we have enough for an induction proof.  It is this critical result which we supply in the following theorem:

\begin{theorem}\label{oddbacktoodd}
Let $f\in\mathcal{V}_n^{(1)}$, with $n\equiv 1\pmod{3}$.  Then

\begin{align}
\frac{1}{9}U^{(0)}\circ U^{(1)}\left( f \right)&\in\mathcal{V}_{9n+1}^{(1)}.
\end{align}
\end{theorem}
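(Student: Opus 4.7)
The strategy is to chain the two transit theorems already established and then verify, by a direct mod-$9$ computation, that the distinguished linear condition defining $\mathcal{V}^{(1)}$ is preserved. Since $f\in\mathcal{V}_n^{(1)}$ with $n\equiv 1\pmod 3$, Theorem \ref{gofrom1to0} gives $g := \frac{1}{9}U^{(1)}(f)\in\mathcal{V}_{3n}^{(0)}$. Applying Theorem \ref{v02v1} to $g$, and using linearity of $U^{(0)}$, we obtain $\frac{1}{9}U^{(0)}\circ U^{(1)}(f) = U^{(0)}(g)\in\mathcal{V}_{9n+1}^{(0)}$. It therefore remains to promote membership from $\mathcal{V}_{9n+1}^{(0)}$ to $\mathcal{V}_{9n+1}^{(1)}$; writing
$$\frac{1}{9}U^{(0)}\circ U^{(1)}(f) = \frac{1}{(1+9x)^{9n+1}}\sum_{r'\ge 1}s''(r')\cdot 3^{\theta(r')}\cdot x^{r'},$$
this amounts to showing $s''(1)+s''(2)+s''(3)\equiv 0\pmod 9$.

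To set up the verification, expand $f = (1+9x)^{-n}\sum_m s(m)\cdot 3^{\theta(m)}\cdot x^m$ and apply Theorem \ref{thmuyox} twice. Since $\theta(r')=0$ for $r'\le 3$, we find
$$9\sum_{r'=1}^{3}s''(r') = \sum_{r'=1}^{3}\sum_{r\ge 1}\sum_{m\ge 1}s(m)\,h_1(m,n,r)\,h_0(r,3n,r')\,3^{\theta(m)+\pi_1(m,r)+\pi_0(r,r')}.$$
The support constraints in Theorem \ref{thmuyox} restrict this sum to $1\le r\le 3r'-1\le 8$ and $1\le m\le 3r\le 24$, so the calculation reduces to a finite one. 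Corollary \ref{congcor2} permits the reductions $h_1(m,n,r)\equiv h_1(m,1,r)\pmod 9$ (using $n\equiv 1\pmod 3$) and $h_0(r,3n,r')\equiv h_0(r,3,r')\pmod 9$. Any term with $\theta(m)+\pi_1(m,r)+\pi_0(r,r')\ge 2$ is therefore divisible by $9\cdot 9 = 81$ after these substitutions, and so contributes nothing to $\sum_{r'=1}^{3}s''(r')\pmod 9$.

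The principal obstacle is the enumeration of the finitely many exceptional terms with $\theta(m)+\pi_1(m,r)+\pi_0(r,r')\le 1$. Because $\theta(m)\ge 2$ whenever $m\ge 4$, the exceptional contributions necessarily have $1\le m\le 3$, and the remaining $(m,r,r')$-triples can be listed explicitly. Tabulating the relevant values of $h_1(m,1,r)$ and $h_0(r,3,r')$ from the fundamental relations of Appendix~II, one expects these contributions to collapse (mod $9$) into a constant multiple of $\sum_{m=1}^{3}s(m)$, exactly as in the final step of the proof of Theorem \ref{gofrom1to0}. The hypothesis $f\in\mathcal{V}_n^{(1)}$ then forces this to vanish modulo $9$, yielding $s''(1)+s''(2)+s''(3)\equiv 0\pmod 9$ and hence $\frac{1}{9}U^{(0)}\circ U^{(1)}(f)\in\mathcal{V}_{9n+1}^{(1)}$. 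Organizing this finite cancellation in the style of Appendix~I is the main bookkeeping task, and the key observation is that the two exceptional reductions needed for the composite operator $U^{(0)}\circ U^{(1)}$ are precisely the ones tracked by the same linear constraint that already defines $\mathcal{V}^{(1)}$.
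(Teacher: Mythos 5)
Your overall strategy coincides with the paper's: chain Theorems \ref{gofrom1to0} and \ref{v02v1} to land in $\mathcal{V}_{9n+1}^{(0)}$, then verify the defining congruence of $\mathcal{V}^{(1)}$ by reducing $s''(1)+s''(2)+s''(3)$ to a finite computation modulo $9$ via Corollary \ref{congcor2}. However, your reduction step contains a genuine error. Since $s''(r')$ is the full summand divided by $9\cdot 3^{\theta(r')}$ (with $\theta(r')=0$ for $r'\le 3$), a term contributes nothing to $s''(r')\bmod 9$ only when $\theta(m)+\pi_1(m,r)+\pi_0(r,r')\ge 4$; your cutoff at $\ge 2$ guarantees divisibility by $3^2$ only, which after division by $9$ leaves a unit. (What is divisible by $9\cdot 9$ is the \emph{error} introduced by the substitutions $h_1(m,n,r)\mapsto h_1(m,1,r)$ and $h_0(r,3n,r')\mapsto h_0(r,3,r')$ on such terms, not the terms themselves.) Consequently your claim that the exceptional contributions necessarily have $1\le m\le 3$ fails: all terms with exponent $2$ or $3$ survive modulo $9$, and in particular $m$ runs up to $15$. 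The paper's explicit evaluation bears this out --- for instance $\hat{t}(1)$ contains the summand $82\,s(4)\equiv s(4)\pmod 9$ --- and these contributions are individually nonzero; they cancel only after summing over $w=1,2,3$, which is not something one can see without actually carrying out the enlarged computation.

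This feeds into the second issue: the collapse of the surviving terms to a constant multiple of $s(1)+s(2)+s(3)$ is precisely the content that must be proved, and you only assert it as an expectation. The paper proves it by computing $\hat{t}(1),\hat{t}(2),\hat{t}(3)$ explicitly as linear forms in $s(1),\dots,s(15)$ (in the supplement), checking integrality of the apparent thirds using $s(1)+s(2)+s(3)\equiv 0\pmod 9$, and verifying that the sum reduces to $6\bigl(s(1)+s(2)+s(3)\bigr)\equiv 0\pmod 9$. One further point: Corollary \ref{congcor2} is stated only for $r\le 5$ (resp.\ $r\le 3w-1$), so before invoking it you must first discard the range $r\ge 5$ by showing $\theta(m)+\pi_1(m,r)+\pi_0(r,w)\ge 4$ there, as the paper does; your sum as written runs over $r$ up to $8$ and $m$ up to $24$, outside the corollary's stated range.
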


\begin{proof}

We have already established that

\begin{align*}
\frac{1}{9}U^{(1)}\left( f \right)&\in\mathcal{V}_{3n}^{(0)},
\end{align*} and that

\begin{align*}
\frac{1}{9}U^{(0)}\circ U^{(1)}\left( f \right)&\in\mathcal{V}_{9n+1}^{(0)}.
\end{align*}  So we know that 

\begin{align*}
\frac{1}{9}U^{(0)}\circ U^{(1)}\left( f \right) &= \frac{1}{(1+9x)^{9n+1}}\sum_{w\ge 1}t(w)\cdot 3^{\theta(w)}\cdot x^w,
\end{align*} with the coefficients $t(w)$ defined by

\begin{align*}
t(w) =& \sum_{r=1}^{3w-1}\sum_{m=1}^{3r}s(m)\cdot h_1(m,n,r)\cdot h_0(r,3n,w)\\ &\times 3^{\theta(m) + \pi_1(m,r) + \pi_0(r,w) -\theta(w) -2}.
\end{align*}  We want to prove that 

\begin{align*}
t(1)+t(2)+t(3)\equiv 0\pmod{9}.
\end{align*}  This will establish that 

\begin{align*}
\frac{1}{9}U^{(0)}\circ U^{(1)}\left( f \right)&\in\mathcal{V}_{9n+1}^{(1)}.
\end{align*}  Notice that for $1\le w\le 3$, $\theta(w)=0$.  So for these cases, we have

\begin{align*}
t(w) =& \sum_{r=1}^{3w-1}\sum_{m=1}^{3r}s(m)\cdot h_1(m,n,r)\cdot h_0(r,3n,w)\\ &\times 3^{\theta(m) + \pi_1(m,r) + \pi_0(r,w) -2}.
\end{align*}

Moreover, we need not consider terms for which $\theta(m) + \pi_1(m,r) + \pi_0(r,w) -2\ge 2$, i.e., terms which will vanish modulo 9.  A quick computation shows that $\theta(m) + \pi_1(m,r) + \pi_0(r,w) \ge 4$ for $1\le m\le 3r$, $1\le w\le 3$, and $r\ge 5$.  We therefore have

\begin{align*}
t(1) \equiv & \sum_{r=1}^{2}\sum_{m=1}^{3r}s(m)\cdot h_1(m,n,r)\cdot h_0(r,3n,w)\\ &\times 3^{\theta(m) + \pi_1(m,r) + \pi_0(r,w) -2}\pmod{9},
\end{align*}  and

\begin{align*}
t(w) \equiv & \sum_{r=1}^{5}\sum_{m=1}^{3r}s(m)\cdot h_1(m,n,r)\cdot h_0(r,3n,w)\\ &\times 3^{\theta(m) + \pi_1(m,r) + \pi_0(r,w) -2}\pmod{9}
\end{align*} for $w=2,3$.  Next, we consider that, by Corollary \ref{congcor2}, we have

\begin{align*}
h_0(m,3n,r)&\equiv h_0(m,3,r)\pmod{9},\\
h_1(m,3n+1,r)&\equiv h_1(m,1,r)\pmod{9},
\end{align*} for $1\le r\le 5$, $1\le m\le 3r$.  We therefore define $\hat{t}(w)$ by

\begin{align*}
\hat{t}(1) :=& \sum_{r=1}^{2}\sum_{m=1}^{3r}s(m)\cdot h_1(m,1,r)\cdot h_0(r,3,w)\\ &\times 3^{\theta(m) + \pi_1(m,r) + \pi_0(r,w) -2},
\end{align*} and

\begin{align*}
\hat{t}(w) := & \sum_{r=1}^{5}\sum_{m=1}^{3r}s(m)\cdot h_1(m,1,r)\cdot h_0(r,3,w)\\ &\times 3^{\theta(m) + \pi_1(m,r) + \pi_0(r,w) -2},
\end{align*} for $2\le w\le 3$.

We have $\hat{t}(w)\equiv t(w)\pmod{9}$ for $1\le w\le 3$.  To prove that $t(1)+t(2)+t(3)\equiv 0\pmod{9}$, we have only to show that $\hat{t}(1)+\hat{t}(2)+\hat{t}(3)\equiv 0\pmod{9}$.

We provide the detailed computations of these terms in our Mathematica supplement.  These computations yield the following:

\begin{align*}
\hat{t}(1) =& -\frac{2s(1)}{3} + \frac{22s(2)}{3} + \frac{49s(3)}{3} + 82s(4) + 16s(5) + s(6),\\
\hat{t}(2) =& \frac{625s(1)}{3} + \frac{36784s(2)}{3} + \frac{1201024s(3)}{3} + 24003457s(4) + 60815254s(5) + 77522425s(6) + 172725210s(7)\\ &+ 243795825s(8) + 228765303s(9) + 48752847s(10) + 21348036s(11) + 6305121s(12) + 1201392s(13)\\ &+ 44469s(14) + 2187s(15),\\
\hat{t}(3) =& \frac{51181s(1)}{3} + \frac{8480416s(2)}{3} + \frac{315495472s(3)}{3} + 24003457s(4) + 60815254s(5) + 77522425s(6)\\ &+ 172725210s(7) + 243795825s(8) + 228765303s(9) + 48752847s(10) + 21348036s(11) + 6305121s(12)\\ &+ 1201392s(13) + 44469s(14) + 2187s(15).
\end{align*}

Notice that the apparently fractional parts of each term above are integers:

\begin{align*}
-\frac{2s(1)}{3} + \frac{22s(2)}{3} + \frac{49s(3)}{3}&\in\mathbb{Z},\\
\frac{625s(1)}{3} + \frac{36784s(2)}{3} + \frac{1201024s(3)}{3}&\in\mathbb{Z},\\
\frac{51181s(1)}{3} + \frac{8480416s(2)}{3} + \frac{315495472s(3)}{3}&\in\mathbb{Z}.
\end{align*}  This is guaranteed by the fact that $s(1)+s(2)+s(3)\equiv 0\pmod{9}$.  Thus, we have $\hat{t}(1),\hat{t}(2),\hat{t}(3)\in\mathbb{Z}$.

Summing the three together and reducing modulo 9, we have

\begin{align*}
\hat{t}(1)+\hat{t}(2)+\hat{t}(3) =& 17268 s(1) + 2839074 s(2) + 105565515 s(3) + 6356507958 s(4) + 16011491148 s(5)\\ &+ 20309056443 s(6) + 45101492226 s(7) + 63531577746s(8) + 59545450458s(9)\\& + 12681870624s(10) + 5551428312s(11) + 1639392696s(12) + 312361920s(13)\\ &+ 11561940s(14) + 568620s(15)\\
\equiv& 6 (s(1)+s(2)+s(3))\\ \equiv& 0\pmod{9}.
\end{align*}

\end{proof}

At last, we have enough to prove Theorem \ref{Mythm}:

\begin{proof}[Proof of Theorem \ref{Mythm}]

To prove (\ref{L1S}), notice by (\ref{isittruethough})-(\ref{isittruethoughb}) that 

\begin{align*}
L_1 = U^{(0)}(1).
\end{align*}  The representation of $U^{(0)}(1)$ is one of the fundamental relations in Appendix II which we will prove in the next section.

Notice that

\begin{align*}
\frac{1}{3}\cdot L_1 = f_1\in\mathcal{V}_{1}^{(1)}.
\end{align*}  Suppose that for some $\alpha\in\mathbb{Z}_{\ge 1}$, we have
\begin{align*}
\frac{1}{3^{2\alpha-1}}\cdot L_{2\alpha-1}\in\mathcal{V}_n^{(1)},
\end{align*} for some $n\in\mathbb{Z}_{\ge 1}$ with $n\equiv 1\pmod{3}$.  Then

\begin{align}
L_{2\alpha-1} = 3^{2\alpha-1}\cdot f_{2\alpha-1},
\end{align} with $f_{2\alpha-1}\in\mathcal{V}_n^{(1)}$.  Now,

\begin{align}
L_{2\alpha} = U_3\left(L_{2\alpha-1} \right) = U_3\left( 3^{2\alpha-1}\cdot f_{2\alpha-1} \right) = 3^{2\alpha-1}\cdot U^{(1)}\left( f_{2\alpha-1} \right).
\end{align}  By Theorem \ref{gofrom1to0}, we know that there exists some $f_{2\alpha}\in\mathcal{V}_{3n}^{(0)}$ such that

\begin{align}
U^{(1)}\left( f_{2\alpha-1} \right) = 9\cdot f_{2\alpha}.
\end{align} Therefore

\begin{align}
L_{2\alpha} = 3^{2\alpha+1}\cdot f_{2\alpha}.
\end{align}  Moreover,

\begin{align}
L_{2\alpha+1} = U_3\left( \mathcal{A}\cdot L_{2\alpha} \right) = U_3\left( \mathcal{A}\cdot 3^{2\alpha+1}\cdot f_{2\alpha}\right) = 3^{2\alpha+1}\cdot U^{(0)}\left( f_{2\alpha} \right).
\end{align}  By Theorem \ref{oddbacktoodd}, we know that there exists some $f_{2\alpha+1}\in\mathcal{V}_{9n+1}^{(1)}$ such that

\begin{align}
U^{(0)}\left( f_{2\alpha} \right) = f_{2\alpha+1}.
\end{align}  Therefore,

\begin{align}
L_{2\alpha+1} = 3^{2\alpha+1}\cdot f_{2\alpha+1}.
\end{align}

We briefly show that the power of our localizing factor for $L_{\alpha}$ matches with $\psi(\alpha)$, i.e., that

\begin{align*}
\frac{L_{2\alpha-1}}{3^{2\alpha-1}}&\in\mathcal{V}_{\psi(2\alpha-1)}^{(1)},\\
\frac{L_{2\alpha}}{3^{2\alpha+1}}&\in\mathcal{V}_{\psi(2\alpha)}^{(0)}.
\end{align*}  It is a trivial fact of modular arithmetic that for all $\alpha\ge 1$,

\begin{align*}
3^{2\alpha-1}&\equiv 3\pmod{8},\\
3^{2\alpha}&\equiv 1\pmod{8},
\end{align*} and therefore that

\begin{align*}
\left\lfloor \frac{3^{2\alpha-1}}{8} \right\rfloor &= \frac{3^{2\alpha-1}}{8} - \frac{3}{8},\\
\left\lfloor \frac{3^{2\alpha}}{12} \right\rfloor &= \frac{3^{2\alpha}}{8} - \frac{1}{8}.
\end{align*}  For example,

\begin{align*}
3\cdot\psi(2\alpha-1) &= 3\cdot \left\lfloor \frac{3^{2\alpha}}{8} \right\rfloor\\
&= 3\cdot \left(\frac{3^{2\alpha}}{8} - \frac{1}{8}\right)\\
&= \frac{3^{2\alpha+1}}{8} - \frac{3}{8}\\
&= \left\lfloor \frac{3^{2\alpha+1}}{8} \right\rfloor\\
&= \psi(2\alpha).
\end{align*}  It can similarly be proved that

\begin{align*}
3\cdot\psi(2\alpha)+1 = \psi(2\alpha+1).
\end{align*}  Therefore, our powers of $(1+9x)$ increase as in Theorem \ref{thmuyox}.  Finally, $\psi(1) = 1$ is the correct power for $L_1$.

\end{proof}

\section{Initial Relations}\label{initialrelationssection}

For $i$ fixed, we need 18 initial relations in order to properly express $U^{(i)}\left( \frac{x^{m}}{(1+9x)^{n}} \right)$ for arbitrary $m,n$.  However, notice that

\begin{align}
U^{(i)}\left( \frac{x^{m}}{(1+9x)^{n}} \right) =& \frac{1}{9^m}\cdot U^{(i)}\left( \frac{(z-1)^{m}}{z^{n}} \right)\\
=& \frac{1}{9^m}\sum_{r=0}^{m}(-1)^{m-r}{{m}\choose{r}}\cdot U^{(i)}\left( z^{r-n} \right)\\
=& \frac{1}{9^m}\sum_{r=0}^{m}(-1)^{m-r}{{m}\choose{r}}\cdot U^{(i)}\left( (1+9x)^{r-n} \right).
\end{align}  We can compute any value of $U^{(i)}\left( \frac{x^{m}}{(1+9x)^{n}} \right)$ if we have the value of $U^{(i)}\left( (1+9s)^{r} \right)$ for $-n\le r\le m-n$.

In particular, if we want expressions of $U^{(i)}\left( \frac{x^{m}}{(1+9x)^{n}} \right)$ for $1\le m,n\le 3$, we need to know $U^{(i)}\left( (1+9x)^{r} \right)$ for $-3\le r\le 2$.  However, our modular equation for $z=1+9x$ allows us to simplify even more:

\begin{align}
U^{(i)}\left( (1+9z)^n \right) &= -\sum_{k=0}^{2} b_k(\tau)\cdot U^{(i)}\left( (1+9x)^{k+n-3} \right).
\end{align}  For $n\ge 0$ we of course have

\begin{align}
U^{(i)}\left( (1+9x)^n \right) &= \sum_{k=0}^n {{n}\choose{k}}\cdot 9^k\cdot U^{(i)}\left( x^k \right),
\end{align} and we can take advantage of the modular equation for $x$.

So to derive expressions for $U^{(i)}\left( \frac{x^{m}}{(1+9x)^{n}} \right)$ for any $m,n\in\mathbb{Z}$ and $i$ fixed at 0 or 1, we need expressions for $U^{(i)}\left( x^k \right)$ for 3 consecutive values of $k$.  For both values of $i$, we need a total of 6 relations.  Actually, $U^{(1)}\left( x^0 \right)=U_3\left( 1 \right)=1$ is true trivially, so that we only really have 5 relations to prove.

Once these relations are established, 

\begin{theorem}
The relations from Theorem \ref{thmuyox}, together with the congruence conditions of Corollary \ref{congcor1}, hold for $1\le m\le 3,$ and $1\le n\le 3$.
\end{theorem}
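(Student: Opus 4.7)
The plan is to establish the 18 base-case identities by reducing them to six fundamental relations on $\mathrm{X}_0(6)$, proving those fundamental relations by cusp analysis, and then checking the base-case congruences by inspection. The reduction framework is already set up in the paragraphs immediately preceding the theorem, so the substantive work lies in verifying the six fundamentals.

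First I would prove the six fundamental identities $U^{(i)}(x^k)$ for $i\in\{0,1\}$ and $0\le k\le 2$ (of which $U^{(1)}(1)=1$ is trivial). By Theorem~\ref{uelleffectsmod}, since $x^k\in\mathcal{M}(\Gamma_0(6))$ and $\mathcal{A}\,x^k\in\mathcal{M}(\Gamma_0(18))$ with $9\mid 18$, both $U^{(1)}(x^k)=U_3(x^k)$ and $U^{(0)}(x^k)=U_3(\mathcal{A}\,x^k)$ lie in $\mathcal{M}(\Gamma_0(6))$. Using Theorem~\ref{orderboundmodfunc} I would bound the orders of each $U^{(i)}(x^k)$ at the four cusps $[\infty]_6,[1/3]_6,[1/2]_6,[0]_6$, confirming that all poles lie at $[0]_6$ and obtaining an upper bound $N$ on the pole order there. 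Because $\mathfrak{g}(\mathrm{X}_0(6))=0$ and $x$ generates $\mathcal{M}^0(\Gamma_0(6))$ as a polynomial ring, each $U^{(i)}(x^k)$ is a polynomial in $x$ of degree at most $N$; I would identify the correct polynomial by matching the first $N+1$ terms of the $q$-expansion of both sides and invoking the uniqueness argument given after Theorem~\ref{riemannsurfacetheorema}.

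Second, I would derive each of the 18 initial relations. As indicated in the preceding text, the binomial identity
\begin{align*}
\frac{x^m}{(1+9x)^n} = \frac{1}{9^m}\sum_{r=0}^{m}(-1)^{m-r}\binom{m}{r}(1+9x)^{r-n},
\end{align*}
together with the modular equation (\ref{modZ}) for $z=1+9x$ and Lemma~\ref{impUprop}(2), reduces each $U^{(i)}\!\left(x^m/(1+9x)^n\right)$ with $1\le m,n\le 3$ to an explicit $\mathbb{Z}[x]_{\mathcal{S}}$-combination of the six fundamental relations. Positive powers of $z$ expand binomially into $U^{(i)}(x^k)$; negative powers $z^{-n}$ are reduced to less negative ones via $z^{-n}=-b_0(3\tau)^{-1}\sum_{k=1}^{3}b_k(3\tau)\,z^{-(n-k)}$ and the pull-out rule for $U_3$. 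Carrying out these substitutions yields the form asserted by Theorem~\ref{thmuyox}, with the integrality of the coefficients $h_i(m,n,r)$, the starting bounds $r\ge\lceil m/3\rceil$ (resp.\ $\lceil(m+1)/3\rceil$), and the prescribed powers $3^{\pi_i(m,r)}$ all visible from the explicit expressions.

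Third, the congruences of Corollary~\ref{congcor1} that must be verified in the base cases --- in particular $h_1(m,1,2)\equiv 0$ and $h_0(m,3,4)\equiv 0$ modulo $3$ for $1\le m\le 3$ (and $h_1(m,1,4)\equiv 0\pmod 3$ for $1\le m\le 6$, obtained by the same reduction applied to $U^{(1)}(x^m/(1+9x))$ with $m\le 6$) --- follow by inspection of the explicit coefficients produced in step two.

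The main obstacle is step one, specifically the cusp analysis for $U^{(0)}(x^k)=U_3(\mathcal{A}\,x^k)$: Theorem~\ref{orderboundmodfunc} requires minimising over the residue classes $0\le l\le 2$ at each cusp $[a/c]_6$ with the correct exponent vector for $\mathcal{A}\,x^k$, which is a delicate but finite computation in gcd's and floor functions. Once the pole-order bounds are established, the remaining verifications --- polynomial identification by $q$-expansion, the algebraic reduction of the 18 relations, and the congruence inspections --- are mechanical.
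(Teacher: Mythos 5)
Your overall architecture matches the paper's: reduce the 18 base cases to the six fundamental relations $U^{(i)}(x^k)$ via the binomial identity and the modular equation for $z$, prove the fundamentals by cusp analysis plus comparison of principal parts, and verify the base-case congruences by inspecting the resulting explicit coefficients. Steps two and three of your plan are essentially what the paper does.

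Step one, however, contains a genuine gap for the $i=0$ relations. You assert that the order bounds will confirm that all poles of $U^{(0)}(x^k)=U_3(\mathcal{A}\,x^k)$ lie at $[0]_6$, so that each such function is a polynomial in $x$. This is false: the correct identities (Appendix II) have the form $U^{(0)}(x^l)=p_{0,l}(x)/(1+9x)$ with $p_{0,l}(-1/9)\neq 0$, and since $z=1+9x$ has a simple zero at the cusp $[1/2]_6$, the function $U^{(0)}(x^l)$ has a simple pole at $[1/2]_6$ in addition to its pole at $[0]_6$. It therefore does not lie in $\mathcal{M}^{0}(\Gamma_0(6))$ and is not a polynomial in $x$; matching $q$-expansions against a candidate polynomial cannot close the argument, because the difference would still have a pole at $[1/2]_6$ and the constancy argument following Theorem \ref{riemannsurfacetheorema} would not apply. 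The missing ingredient is exactly the ``possible normalization'' the paper performs: one must first multiply by $z$ and prove the identities in the form $z\cdot U^{(0)}(x^l)=p_{0,l}(x)$. The paper then divides by $x^{11}$, pulls $x(3\tau)^{-11}$ and $z(3\tau)$ inside $U_3$ via Lemma \ref{impUprop}(2), computes the \emph{exact} cusp orders of the resulting eta quotients on $\mathrm{X}_0(18)$ with Theorem \ref{Ligozat} (Table \ref{tablew0}), and shows by an explicit coset computation that $U_3$ carries $\mathcal{M}^{\infty}(\Gamma_0(18))$ into $\mathcal{M}^{\infty}(\Gamma_0(6))$ before comparing principal parts --- a route that yields exact orders rather than the lower bounds of Theorem \ref{orderboundmodfunc}. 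With the $z$-normalization inserted, your plan goes through; without it, the three $U^{(0)}$ fundamentals cannot be established as stated.
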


Checking the 18 initial relations is a straightforward computation which we detail in the Mathematica supplement.  To prove the fundamental relations, we take advantage of the theory of modular functions in Section \ref{basictheorysection}.  We already proved that $x\in\mathcal{M}^0\left( \Gamma_0(6) \right)$.  After a possible normalization (i.e., multiplying to cancel a denominator of $1+9x$), we need to verify that the left-hand side of each relation is a modular function with a pole at the same cusp.  We can then to compare the principal parts and constants from either side of our prospective relation.  It is easier to expand both sides of each relation with respect to the pole at $[\infty]_6$ (i.e., the pole at $q\rightarrow 0$), so we divide by a sufficient power of $x$ to induce a pole at the cusp representable by $[\infty]_6$.

\subsection{Computing the Fundamental Relations}

The relations in Appendix II consist of equations of the form

\begin{align}
U^{(1)}\left( x^l \right) &=p_{1,l}(x)\in\mathbb{Z}[x],
\end{align} for $0\le l\le 2$, and
\begin{align}
z\cdot U^{(0)}\left( x^l \right) &=p_{0,l}(x)\in\mathbb{Z}[x]
\end{align} for $0\le l\le 2$.

We already showed that $x,z\in\mathcal{M}(\Gamma_0(6))$, and that $\mathcal{A}\in\mathcal{M}(\Gamma_0(18))$.

Notice that by (\ref{Ligoxn1a})-(\ref{Ligoxn1b}), $1/x\in\mathcal{M}^{\infty}(\Gamma_0(6))$.  Therefore, if we define $m$ as the degree of $p_{i,l}$, and then we multiply both sides of the prospective relations above by $1/x^{m}$, our relations have the form

\begin{align}
\frac{1}{x^{m}}\cdot U^{(1)}\left( x^l \right) &\in\mathbb{Z}[x^{-1}]\subseteq\mathcal{M}^{\infty}(\Gamma_0(6)),\label{initialTypeA}
\end{align}
\begin{align}
\frac{1}{x^{m}}\cdot z\cdot U^{(0)}\left( x^l \right) &\in\mathbb{Z}[x^{-1}]\subseteq\mathcal{M}^{\infty}(\Gamma_0(6)).\label{initialTypeB}
\end{align}  We need to verify that the normalized left-hand sides of each relation in Appendix II are members of $\mathcal{M}^{\infty}(\Gamma_0(6))$.  Then we compare the principal parts and constants on either side: if these are equal, then the functions must be entirely equal.

Starting with the left-hand sides, we use Theorem \ref{Ligozat} to compute Table \ref{tablew0}.

\begin{table}[hbt!]
\begin{center}
\begin{tabular}{l|c|c|c|r}
 Cusp Representative      &$\mathcal{A}(\tau)$  & $x(\tau)$ & $x(3\tau)$ & $z(3\tau)$\\
\hline
 $\infty$      & 1          & 1 & 3 & 0  \\
 $1/9$         & 4          & 0 & 0 & 0  \\
 $1/6$         & 0          & 1 & 0 & 1  \\
 $1/3$         & 0          & 0 & -1& -1 \\
 $1/2$         & -1         & 0 & 0 & 1  \\
 $2/3$         & 0          & 0 & -1 & -1  \\
$5/6$          & 0          & 1 & 0 & 1  \\
$0$             & -4         & -3 & -1 & -1  
 \end{tabular}
\caption{Order at Cusps of $\mathrm{X}_0(18)$}\label{tablew0}
\end{center}
\end{table}  Remembering Line 2 of Lemma \ref{impUprop}, we can pull everything on the left-hand side of our prospective relations inside of $U_3$:

\begin{align*}
\frac{1}{x^{m}}\cdot U^{(1)}\left( x^l \right) = U_3\left( x(3\tau)^{-m} x(\tau)^l \right),
\end{align*}
\begin{align*}
\frac{1}{x^{m}}\cdot z\cdot U^{(0)}\left( x^l \right) = U_3\left( \mathcal{A}(\tau) x(3\tau)^{-m} z(3\tau) x^l \right)
\end{align*}

Given that the largest degree in the relations of Appendix II is 11, we may simply take $m=11$ in all of our cases.  One then computes, using Table \ref{tablew0}, that

\begin{align*}
x(3\tau)^{-11} x(\tau)^l&\in\mathcal{M}^{\infty}(\Gamma_0(18)),\\
\mathcal{A}(\tau) x(3\tau)^{-11} z(3\tau) x^l&\in\mathcal{M}^{\infty}\left(\Gamma_0(18)\right).
\end{align*}  If we remember Line 3 of Lemma \ref{impUprop}, and apply it in terms of $\tau$, then

\begin{align*}
3\cdot U_3\left(f(\tau)\right) = \sum_{r=0}^2 f\left( \frac{\tau+r}{3}\right).
\end{align*}  Notice that by Theorem \ref{uelleffectsmod}, this is a member of $\mathcal{M}\left(\Gamma_0(6)\right)$.

Now, as

\begin{align*}
\tau&\rightarrow \frac{h}{k}\in\mathbb{Q},\ \mathrm{gcd}(h,k)=1,\\
\frac{\tau+r}{3}&\rightarrow \frac{h+kr}{3k}.
\end{align*}  In order for $(h+kr)/3k\in [\infty]_{18}$, (\ref{equivalentcuspsA})-(\ref{equivalentcuspsB}) require that

\begin{align*}
&y\equiv h+kr+3jk\pmod{18}\\
&18\equiv 3ky\pmod{18},\\
&\mathrm{gcd}(y,18)=1.
\end{align*}  This implies that $k=6k_1$ with $k_1\in\mathbb{Z}$, and that

\begin{align*}
&y\equiv h+kr\pmod{18},\\
&\mathrm{gcd}(y,18)=1
\end{align*} has a solution.  But this is true for all $r$, since $\mathrm{gcd}(h,k)=\mathrm{gcd}(h,6k_1)=1,$ and $\mathrm{gcd}(h+kr,18)=\mathrm{gcd}(h,18)=1$.

This means that if $f(\tau)\in\mathcal{M}^{\infty}\left( \Gamma_0(18) \right)$, then $U_3\left(f(\tau)\right)\in\mathcal{M}\left( \Gamma_0(6) \right)$ will have a pole at a cusp $[h/6k_1]_6$, with $\mathrm{gcd}(h,6k_1)=1$.  But we can verify that there exists some $y_1$ such that

\begin{align*}
&y_1\equiv h+6k_1j\pmod{6}\\
&6\equiv 6ky_1\pmod{6},\\
&\mathrm{gcd}(y_1,6)=1.
\end{align*}  By (\ref{equivalentcuspsA})-(\ref{equivalentcuspsB}), we have $[h/6k_1]_6 = [1/6]_6= [\infty]_6$, and

\begin{align*}
U_3\left( x(3\tau)^{-11} x(\tau)^l \right)&\in\mathcal{M}^{\infty}(\Gamma_0(6)),\\
U_3\left( \mathcal{A}(\tau) x(3\tau)^{-11} z(3\tau) x^l\right) &\in\mathcal{M}^{\infty}(\Gamma_0(6)),
\end{align*} for $0\le l\le 2$.  We compute the principal parts and constants of these, and compare to the principal parts and constants of the left-hand sides of $p_{i,l}(x)/x^{11}$.

Finally, we prove (\ref{zequals1plus9x}) and (\ref{modX}).  The associated computations are relatively short, and given in the Mathematica supplement.

\subsection{The Relation of $z$ to $x$}

To prove (\ref{zequals1plus9x}), we use Theorem \ref{Ligozat} to find that

\begin{align}
\mathrm{ord}_{\infty}^{(6)}(z/x) &= -1,\\
\mathrm{ord}_{1/3}^{6}(z/x) &= 0,\\
\mathrm{ord}_{1/2}^{(6)}(z/x) &= 1,\\
\mathrm{ord}_{0}^{(6)}(z/x) &= 0.
\end{align}  This, together with (\ref{Ligoxn1a})-(\ref{Ligoxn1b}), shows that

\begin{align*}
\frac{z}{x}-\frac{1}{x}\in\mathcal{M}^{\infty}(\Gamma_0(6)).
\end{align*}  A quick calculation shows that $\frac{z}{x}-\frac{1}{x}=9.$  This can be rearranged to (\ref{zequals1plus9x}).

\subsection{Proof of the Modular Equation}

To prove (\ref{modX}), we again use Table \ref{tablew0} to verify that 

\begin{align*}
x(3\tau)^{-3}\cdot x(\tau)\in\mathcal{M}^{\infty}\left(\Gamma_0(18)\right).
\end{align*}  As such, the principal part and constant of

\begin{align}
x(3\tau)^{-9}\cdot\left(x^3+\sum_{j=0}^2 a_j(3\tau) x^j\right)
\end{align} can quickly be verified to be 0, thus giving us (\ref{modX}).

\section{Appendix I}

We give the tables used in the proof of Theorems \ref{v02v1} and \ref{gofrom1to0}.

\begin{table}[hbt!]
\begin{center}
\begin{tabular}{l|c|c|c|c|c|r}
 $m$      &$r=1$  & $r=2$ & $r=3$ &$r=4$  & $r=5$ & $r=6$ \\
\hline
 1         & 0            & 0 & 1 & -1            & 0 & 1 \\
 2         & 0            & 0 & 0 & -1            & 0 & 1  \\
 3         &              & 0 & 0 & -1            & 0 & 0   \\
 4         &              & 2 & 2 & 1            & 1 & 2   \\
 5         &              & 2 & 2 & 0            & 1 & 2  \\
 6         &              &  & 2 & 0            & 1 & 2
 \end{tabular}
\caption{Value of $\displaystyle{\theta(m) + \pi_0(m,r) - \theta(r)}$ with $1\le m\le 6,$  $1\le r\le 6$}\label{tablew1}
\end{center}
\end{table}

We want each of the values above to be nonzero.  Notice that we have negative values at $r=4$ for $1\le m\le 3$.  However, we take note of the fact that $h_0(m,3,4)\equiv 0\pmod{3}$ for $1\le m\le 3$.  This provides the additional power of 3 needed.

\begin{table}[hbt!]
\begin{center}
\begin{tabular}{l|c|c|c|c|c|r}
 $m$      &$r=1$  & $r=2$ & $r=3$ &$r=4$  & $r=5$ & $r=6$ \\
\hline
 1         & -2          & -1 & 0 & -1            & 0 & 0 \\
 2         & -2          & -1 & 0 & -1            & 0 & 0  \\
 3         & -2          & -1 & 0 & -1            & 0 & 0   \\
 4         &              & 0 & 1 & 0              & 1 & 1   \\
 5         &              & 0 & 1 & 0              & 0 & 1  \\
 6         &              & 0 & 1 & -1             & 0 & 1
 \end{tabular}
\caption{Value of $\displaystyle{\theta(m) + \pi_1(m,r) -\theta(r) -2}$ with $1\le m\le 6,$  $1\le r\le 6$}\label{tablew2}
\end{center}
\end{table}

Similarly to our first table, we have some negative values to consider.  In particular, we have a value of $-1$ at $r=2$, $1\le m\le 3$.  Once again, we have $h_1(m,1,2)\equiv 0\pmod{3}$.  Similarly, $h_1(m,1,4)\equiv 0\pmod{3}$ for $1\le m\le 6$, canceling the negative values at $r=4$ for $1\le m\le 3$ and $m=6$.

This leaves only the negative value $-2$ for $r=1$, $1\le m\le 3$ that is accounted for above.

\section{Appendix II}

Here we give the six fundamental relations that are justified in Section \ref{initialrelationssection}.  The derivation of the 18 relations used in Theorem \ref{thmuyox} is given in our Mathematica supplement.

\begin{align}
U^{(1)}\left( 1 \right) &=1\\
U^{(1)}\left( x \right) &= 19 x + 360 x^2 + 1728 x^3\\
U^{(1)}\left( x^2 \right) &=10 x + 1269 x^2 + 41904 x^3 + 585792 x^4 + 3732480 x^5 + 8957952 x^6\\
U^{(0)}\left( 1 \right) &=\frac{1}{1+9x}\left( 33 x + 1392 x^2 + 21120 x^3 + 138240 x^4 + 331776 x^5 \right)\\
U^{(0)}\left( x \right) &= \frac{1}{1+9x}\big( 12 x + 2325 x^2 + 121080 x^3 + 2915136 x^4 + 37988352 x^5 + 277696512 x^6\nonumber\\ &+ 1074954240 x^7 + 1719926784 x^8 \big)\\
U^{(0)}\left( x^2 \right) &=\frac{1}{1+9x}\big( x + 1213 x^2 + 176005 x^3 + 10225152 x^4 + 318757248 x^5 + 6012278784 x^6\nonumber\\ &+ 72239910912 x^7 + 558546223104 x^8 + 
 2698565124096 x^9 + 7430083706880 x^{10}\nonumber\\ &+ 8916100448256 x^{11} \big)
\end{align}

\section{Acknowledgments}
This research was funded in whole by the Austrian Science Fund (FWF): Einzelprojekte P 33933, ``Partition Congruences by the Localization Method".  My sincerest and humblest thanks to the Austrian Government and People for their generous support.

I wish to thank Professors George Andrews and Peter Paule for their careful and dedicated work on this family of congruences.  I am also grateful to the anonymous referee for suggestions which have substantially improved the form of the paper.  A personal thanks to Marie-Sophie Haiberger for her help in the illustrations in the paper, and to my colleague Koustav Banerjee for his insistence that my ideas in this subject were worth investigating.

\end{document}